\newtheorem{Thm}{Theorem}[section]
\newtheorem{Def}[Thm]{Definition}
\newtheorem{Prop}[Thm]{Proposition}
\newtheorem{Remark}[Thm]{Remark}
\newcommand{\E}{\ensuremath{\mathbb{E}}}
\newcommand{\N}{\ensuremath{\mathbb{N}}}
\newcommand{\Z}{\ensuremath{\mathbb{Z}}}
\newcommand{\R}{\ensuremath{\mathbb{R}}}
\newcommand{\T}{\ensuremath{\mathbb{T}}}
\renewcommand{\P}{\ensuremath{\mathbb{P}}}
\newcommand{\e}{\epsilon}
\newcommand{\de}{\delta}
\numberwithin{equation}{section}
\def\@captype{figure}
\begin{document}
	
	\begin{frontmatter}
		
		\title{Multi-colony Wright-Fisher with seed-bank}

		%% use the fnref command within \author or \address for footnotes;
		%% use the fntext command for the associated footnote;
		%% use the ead command for the email address,
		%%
		%% \title{Title\tnoteref{label1}}
		%% \tnotetext[label1]{}
		%% \author{Name\corref{cor1}\fnref{label2}}
		%% \ead{email address}
		%% \ead[url]{home page}
		%% \fntext[label2]{}
		%% \cortext[cor1]{}
		%% \address{Address\fnref{label3}}
		%% \fntext[label3]{}

		%% use optional labels to link authors explicitly to addresses:
		%% \author[label1,label2]{<author name>}
		%% \address[label1]{<address>}
		%% \address[label2]{<address>}
		
		\author{Frank den Hollander\fnref{Leiden}}
		\author{Giulia Pederzani\fnref{Leiden}}
		\fntext[Leiden]{Mathematical Institute, Leiden University, P.O.\ Box 9512,
			2300 RA Leiden, The Netherlands.}
		
		\begin{abstract}
		We consider a multi-colony version of the Wright-Fisher model with seed-bank that 
		was recently introduced by Blath et al. Individuals live in colonies and change type 
		via resampling and mutation. Each colony contains a seed-bank that acts as a genetic 
		reservoir. Individuals can enter the seed-bank and become dormant or can exit the 
		seed-bank and become active. In each colony at each generation a fixed fraction of 
		individuals swap state, drawn randomly from the active and the dormant population. 
		While dormant, individuals suspend their resampling. While active, individuals resample 
		from their own colony, but also from different colonies according to a random walk 
		transition kernel representing migration. Both active and dormant individuals mutate.
		
		We derive a formula for the probability that two individuals drawn randomly from two 
		given colonies are identical by descent, i.e., share a common ancestor. This formula, 
		which is formulated in Fourier language, is valid when the colonies form a discrete 
		torus. We consider the special case of a symmetric slow seed-bank, for which in 
		each colony half of the individuals are in the seed-bank and at each generation the 
		fraction of individuals that swap state is small. This leads to a simpler formula, from 
		which we are able to deduce how the probability to be identical by descent depends 
		on the distance between the two colonies and various relevant parameters. Through 
		an analysis of random walk Green functions, we are able to derive explicit scaling 
		expressions when mutation is slower than migration. We also compute the spatial 
		second moment of the probability to be identical by descent for all parameters when 
		the torus becomes large. For the special case of a symmetric slow seed-bank, we 
		again obtain explicit scaling expressions.
		\end{abstract}
		
		\begin{keyword}
			\MSC[2010] 
			60J75 \sep %Jump process
			60K35 \sep %Interacting random processes; statistical mechanics type models; percolation theory 
			92D25 \sep%Population dynamics (general)
		
		Wright-Fisher \sep 
		seed-bank \sep
		multi-colony \sep
		resampling \sep
		mutation \sep
		genealogy.
			
		\end{keyword}
		
	\end{frontmatter}

\tableofcontents

%%%%%%%%%% SECTION 1 %%%%%%%%%%%%%%%%%%%%%%%

\section{Introduction}
\label{sec:intro}

%%%

\subsection{Background}

The Wright-Fisher model with seed-bank aims at modelling the genetic evolution of populations 
in which individuals are allowed to achieve a dormant state and maintain it for many 
generations. Dormancy is understood as a reversible rest period, characterised by 
low metabolic activity and interruption of phenotypic development (see Lennon and Jones
\cite{microbialseedbanks}). This type of behaviour is observed in many taxa, including 
plants, bacteria and other micro-organisms, as a response to unfavourable environmental 
conditions. Dormant individuals can be resuscitated under more favourable conditions, after 
a varying and possibly large number of generations, and reprise reproduction. This strategy has 
been shown to have important implications for population persistence, maintenance of genetic 
variability and even stability of ecosystem processes, acting as a buffer against such evolutionary 
forces as genetic drift, selection and environmental variability. The importance of this evolutionary 
trait has led to attempts at modelling seed-banks from a mathematical perspective.

The mathematical modelling of seed-bank effects has been somewhat challenging. In fact, since 
individuals can remain dormant for arbitrarily many generations, it is problematic to retain the 
Markov property. The most successful models to date are extensions of the Wright-Fisher 
model, the classical model for genetic evolution of a population viewed from a probabilistic 
perspective. However, so far these extensions have been able to capture only weak seed-bank 
effects, resulting in a delayed Wright-Fisher model, or they have led to extreme behaviours 
that seem artificial.

The most relevant model was proposed by Kaj, Krone and Lascoux \cite{KKL}, who 
allowed individuals in a population of fixed size to select a parent from a random 
number $B$ of generations in the past, where $B$ is an $\mathbb{N}$-valued random variable 
assumed to be independent and identically distributed for each individual. The authors 
show that if $B$ is bounded, then after the usual rescaling of time by the population size the 
model converges to a delayed Kingman coalescent, where the rates are multiplied by 
$1/\E[B]^2$. More generally, it was proven by Blath, Gonz\'ales Casanova, Kurt and 
Span\`o \cite{longrange} that a sufficient condition for convergence to the Kingman 
coalescent is $\E[B]<\infty$. The sampling from past generations only leads to a delay 
in the coalescence process and leaves the coalescent structure unchanged. Therefore 
it is not capturing the role of seed-banks in maintaining genetic variability. One therefore 
speaks of \emph{weak} seed-bank effects.

A further extension of the model, which allows for \emph{strong} seed-bank effects, 
was proposed in \cite{longrange}. In particular, the authors study the case in which 
the age-distribution $\chi$ of $B$ is heavy-tailed, 
namely, 
\begin{equation}
\chi(B \geq n) = L(n)\,n^{-\alpha}, \quad n\in\N, \qquad 0<\alpha<\infty,
\end{equation} 
where $L$ is slowly varying as $n\to\infty$. It is proved that for $\alpha>\tfrac12$ the most 
recent common ancestor (MRCA) of two randomly sampled individuals exists with probability
$1$, but the expected time to the MRCA is infinite as soon as $\alpha<1$, while for 
$0<\alpha<\tfrac12$ with positive probability a MRCA does not even exist at all. 

Such extreme behaviour may seem artificial, and for this reason Gonz\'alez Casanova et al.\ 
in \cite{strongeffect} and Blath et al.\ in \cite{genealstrongseedbank} have studied the case in 
which $B$ scales with the total population size $N$. In particular, the age-distribution $\chi$ 
of $B$ is given by 
\begin{equation}
\chi = (1-\e)\,\de_1 + \e\,\de_{N^\beta}, \qquad 0 < \beta < \infty, \quad \e\in (0,1).
\end{equation} 
For $\beta<\tfrac13$, this model again shows convergence to the Kingman coalescent, but 
requires a rescaling of time by the non-classical factor $N^{1+2\beta}$. In other words, this
choice of $\chi$ considerably increases the expected time to the MRCA. Still, it leaves the 
coalescent structure unchanged, and results in other parameter regimes remain somewhat 
elusive.

In summary, the mathematical results to date have been effective in modelling weak seed-bank 
effects, but for strong seed-bank effects they have so far been unsatisfying in suggesting new 
limiting coalescent structures. Blath et al.\ \cite{seedbank} propose a new model in which
individuals can enter and exit a seed-bank at each generation. While in the seed-bank,
individuals \emph{suspend} their resampling and preserve their type. The main advantage of this 
model is that it retains the Markov property, while the long-term behaviour is in line with our 
intuition and does not require artificial scaling assumptions. It also offers a natural interpretation 
for the scaling limit of both the forward and the backward process, and the limiting genealogy 
is given by a new coalescent structure. 

%%%

\subsection{Outline}

The goal in the present paper is to study a \emph{multi-colony} version of the model in 
\cite{seedbank}, where individuals can \emph{migrate} between different colonies, each 
carrying a seed-bank, and are also subject to \emph{mutation}. The main quantity we are 
after is the probability that two individuals drawn randomly from two given colonies are 
\emph{identical by descent}, i.e., share a common ancestor. 

The remaining sections are organised as follows. In Section~\ref{sec:seedbankdef} we 
define the single-colony Wright-Fisher model with seed-bank. In Section~\ref{sec:stepstone} 
we present the multi-colony version of this model and derive a formula for the probability 
that two individuals drawn randomly from two given colonies are identical by descent. 
This formula (stated in Theorem~\ref{MainTheorem} below) comes in Fourier language 
and is valid when the colonies form a discrete torus. In Section~\ref{spec-choice-par} 
we consider the special case of a symmetric slow seed-bank, for which in each colony 
half of the individuals are in the seed-bank and at each generation the fraction of individuals 
that swap state is small. This leads to a simpler formula (stated in Theorems~\ref{thm:smalld} 
and \ref{thm:smalldalt} below), from which we are able to deduce how the probability to be 
identical by descent depends on the distance between the two colonies and various 
relevant parameters. Through an analysis of random walk Green functions, we are able 
to derive explicit scaling expressions when mutation is slower than migration (stated in 
Theorems~\ref{d0exp1}--\ref{d>0exp2} below). In Section~\ref{sec:2ndmom} we compute the spatial 
second moment of the probability to be identical by descent for all parameters when the 
torus becomes large (stated in Theorem~\ref{secondmoment} below). For the special 
case of a symmetric slow seed-bank, we again obtain explicit scaling expressions (stated
in Theorem~\ref{secondmomentscal} below).

%%%%%%%%%%%%%% SECTION 2 %%%%%%%%%%%%%%%%%%%%%

\section{The Wright-Fisher model with seed-bank}
\label{sec:seedbankdef}

In Section~\ref{WF} we recall the standard Wright-Fisher model, the simplest model 
for population genetics, where the only evolutionary force at play is \emph{resampling}. 
In Section~\ref{WFS} we recall the extension of this model studied in \cite{seedbank}, 
namely, with a \emph{seed-bank}. In Section~\ref{sec:stepstone} we will introduce a
multi-colony version of the extended model, which will be our main object of study. In 
what follows, we write $\N=\{1,2,\ldots\}$ and $\N_0=\N\cup\{0\}$.

%%%%

\subsection{Wright-Fisher model}
\label{WF}

Consider a population of $N$ haploid individuals, with $N$ fixed. For each genetic 
locus, an individual carries one copy of the gene at that locus that is assumed to be 
one of two types, denoted by $a$ and $A$. The model is discrete in time. At each 
time unit, every individual from the new generation chooses an individual from the 
previous population uniformly at random and adopts its type. This choice is 
independent of time and of the choices of the other individuals. This type of 
resampling mechanism is called parallel updating (all individuals choose an ancestor 
at the same time and independently of each other). Let
\begin{equation}
X_n = \text{number of individuals of type $a$ at time $n$.}
\end{equation}
The sequence $X=(X_n)_{n\in\N_0}$ is the discrete-time Markov chain with state 
space
\begin{equation}
\Omega=\{0,1,2,\ldots,N\}
\end{equation} 
and transition kernel
\begin{equation}
p_{ij} = \binom{N}{j} \left(\frac{i}{N}\right)^j \left(\frac{N-i}{N}\right)^{N-j}, \qquad i,j\in\Omega.
\end{equation}
The latter follows from the fact that, if at time $n$ there are $i$ individuals of type $a$, 
then there will be $j$ individuals at time $n+1$ if and only if precisely $j$ individuals choose 
an ancestor of type $a$ and $N-j$ individuals choose an ancestor of type $A$. The initial 
condition can be any state $X_0\in\Omega$.

The states $0$ and $N$ are traps: $p_{00} = p_{NN} = 1$. Consequently, one of the two 
genetic types eventually becomes extinct: genetic variability is lost through chance.

%%%

\subsection{Wright-Fisher model with seed-bank}
\label{WFS}

The model introduced in \cite{seedbank} consists of a haploid population of fixed size $N$ 
that reproduces in discrete generations. Each individual carries a genetic type from a generic 
type space $E$. In what follows we will focus on the bi-allelic case, $E=\{a,A\}$. Alongside 
the active population, there is a seed-bank of fixed size $M$ containing the dormant individuals. 

Given $M,\ N \in \N$, take $\epsilon \in [0,1]$ such that $\epsilon N \leq M$ and set $\delta 
= \epsilon N/M$. For convenience we assume that $\e N = \de M \in \N$. The dynamics of 
the model is as follows (see Fig.~\ref{fig:seedbank}):
\begin{itemize}
\item 
The $N$ active individuals produce $(1-\epsilon)N$ active individuals in the next generation, 
where every new individual randomly chooses a parent from the previous generation and 
adopts its type.
\item 
The remaining $\epsilon N (= \delta M)$ individuals from the active population produce 
individuals that become dormant, i.e., seeds in the next generation.
\item
From the seed-bank, $\delta M (= \epsilon N)$ individuals become active and leave the 
seed-bank. Therefore in the next generation the active population again consists of 
$(1-\epsilon) N + \epsilon N = N$ individuals, where the first term comes from the previously 
active population and the second term from the previously dormant population.
\item 
The remaining $(1-\delta) M$ seeds remain inactive and stay in the seed-bank. Therefore 
in the next generation the population in the seed-bank again consists of $(1-\delta) M 
+ \delta M = M$ individuals, where the first term comes from the previously dormant 
population and the second term from the previously active population.
\end{itemize}

%%%%%%%%%%%%%%%%%%%%%%%%%%%%%%%%%%%%%%%%%%%
\begin{figure}[htbp]
\begin{center}
\includegraphics[scale=0.4]{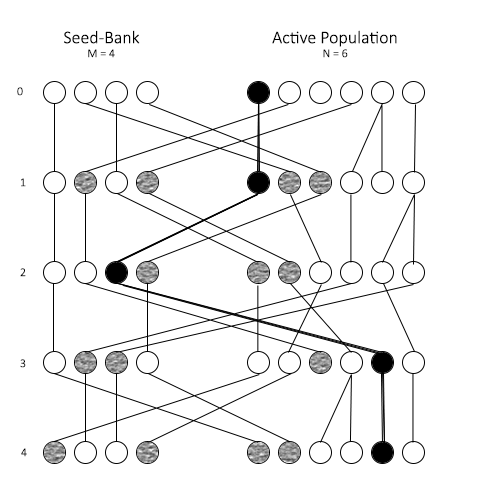}
\end{center}
\vspace{-.4cm}
\caption{\small Evolution over 4 generations of a population with $N+M=10$ individuals, of 
which $N=6$ are active and $M=4$ are inactive (in the seed-bank). The lines indicate
the genealogy of the population. The bold line shows a single line of ancestry.}
\label{fig:seedbank}
\end{figure}
%%%%%%%%%%%%%%%%%%%%%%%%%%%%%%%%%%%%%%%%%%%

\begin{Def}
\label{def:WFsb}
Let $M$, $N$, $\epsilon$, $\delta$ and $E$ be as above. Given an initial genetic type 
configuration $(\xi_0,\eta_0)$ with $\xi_0 \in E^N$, $\eta_0 \in E^M$, denote by
\begin{equation}
\xi_k = (\xi_k (i))_{i \in [N]},\ \ \eta_k = (\eta_k (j))_{j \in [M]}, \qquad  k \in \N_0, 
\end{equation}
the random genetic type configuration of, respectively, the active individuals and the 
dormant individuals in generation $k$. The discrete-time Markov chain $(\xi_k,\eta_k)_{k\in\N_0}$ 
with state space $E^N \times E^M$ is called the \textit{type configuration process} of 
the \textit{Wright-Fisher model with geometric seed-bank component}.
\end{Def}

\noindent
Note that the time a dormant individual stays in the seed-bank before becoming active is a 
random variable with a geometric distribution with parameter $\delta$, and that the times 
for different dormant individual are i.i.d. Similarly, the probability that an active individual 
becomes dormant is $\epsilon$. 

We want to understand the behaviour of the frequency of $a$ alleles in both the active 
population and the seed-bank. We therefore define
\begin{equation} 
X_k^N = \frac{1}{N} \sum_ {i \in [N]} \mathbbm{1}_{\{\xi_k(i)=a\}},
\qquad Y_k^M = \frac{1}{M} \sum_ {j \in [M]} \mathbbm{1}_{\{\eta_k(j)=a\}}, 
\end{equation}
which represent the fraction of individuals having type $a$ in generation $k$ in, respectively, 
the active population and the dormant population. Together they form a discrete-time Markov 
chain taking values in $I^N\times I^M$, with
\begin{equation} 
I^N = \left\{0,\frac{1}{N},\frac{2}{N},\ldots,1\right\} \subset [0,1],
\qquad I^M = \left\{0,\frac{1}{M},\frac{2}{M},\ldots,1\right\} \subset [0,1].
\end{equation}

Abbreviate
\begin{equation}
\mathbb{P}_{x,y} (\cdot) 
= \mathbb{P}(\:\cdot\: |\: X_0^N = x, Y_0^M = y), \qquad (x,y) \in I^N \times I^M.
\end{equation} 
The transition probabilities of this Markov chain can be characterised through the following 
proposition.

\begin{Prop}
Let $ c = \epsilon N = \delta M \in\N $. For $ (x,y), (\bar{x},\bar{y}) \in I^N \times I^M $,
\begin{align} 
&\P_{x,y} ( X_1^N=\bar{x},\ Y_1^M=\bar{y}) \nonumber\\
&\qquad = \sum_{i=0}^c \P_{x,y}(Z=i)\ \P_{x,y} (U=\bar{x}N-i)\ \P_{x,y} (V=\bar{y}M+i),
\end{align}
where $Z$, $U$, $V$ are independent under $\P_{x,y}$, with $Z$ $\cong \textnormal{Hyp}_{M,c,yM}$ 
(hypergeometric distribution), $U$ $\cong \textnormal{Bin}_{N-c,x}$ and $V$ $\cong 
\textnormal{Bin}_{c,x}$ (binomial distributions). 
\end{Prop}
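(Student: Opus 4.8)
The plan is to decompose both the active and the dormant populations at generation $1$ into the sub-populations produced by the distinct mechanisms of the dynamics, identify the law of the type-$a$ count in each piece, and then recombine them using independence and a single conditioning. Throughout I work under $\P_{x,y}$, i.e.\ conditionally on $X_0^N=x$ and $Y_0^M=y$. I would first split the $N$ active individuals at generation $1$ into the $N-c$ individuals that are offspring of the previously active population and the $c$ individuals that awaken from the seed-bank. Each of the $N-c$ resampled actives independently selects a parent uniformly among the $N$ previously active individuals, of which $xN$ are of type $a$, and hence is of type $a$ with probability $x$; the type-$a$ count among them is therefore $U\cong\textnormal{Bin}_{N-c,x}$. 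The $c$ awakening individuals are drawn uniformly without replacement from the $M$ dormant individuals, of which $yM$ are of type $a$, so the type-$a$ count among them is $Z\cong\textnormal{Hyp}_{M,c,yM}$.

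Symmetrically, I would split the $M$ dormant individuals at generation $1$ into the $M-c$ seeds that remain dormant and the $c$ new seeds produced by the previously active population. Each new seed independently picks a parent among the $N$ previously active individuals and is thus of type $a$ with probability $x$, giving a type-$a$ count $V\cong\textnormal{Bin}_{c,x}$ among the new seeds. Because dormant individuals preserve their type and exactly the $Z$ type-$a$ individuals counted above have left the seed-bank, the type-$a$ count among the $M-c$ seeds that remain dormant equals $yM-Z$. This yields the two conservation laws $X_1^N N = U+Z$ and $Y_1^M M = (yM - Z) + V$.

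The structural heart of the argument is that $U$, $V$ and $Z$ are jointly independent under $\P_{x,y}$: they are built from disjoint sources of randomness, namely the parental choices of the resampled actives, the parental choices of the new seeds, and the uniform-without-replacement draw selecting which dormant individuals awaken, and all of these choices are made independently of one another. Once this is established I would condition on $\{Z=i\}$ with $i\in\{0,1,\dots,c\}$. On this event the two conservation laws turn $\{X_1^N=\bar x\}$ into $\{U=\bar x N - i\}$ and $\{Y_1^M=\bar y\}$ into a deterministic linear condition on $V$, and factorising over the now-independent variables $U$ and $V$ and summing over $i$ produces exactly the stated product formula.

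The hard part, and really the only place where care is needed, is the bookkeeping around the shared variable $Z$: the same $i$ type-$a$ awakening individuals are simultaneously added to the active count and subtracted from the dormant count, so $Z$ genuinely couples the two marginals even though $U,V,Z$ are independent; getting the index shifts in the two conservation laws right is what makes the conditioning collapse to a single sum. Verifying the independence of the hypergeometric draw from the two binomial resampling variables is routine once the sources of randomness are cleanly separated, but it must be stated explicitly, since it is precisely what licenses the factorised form inside the summation.
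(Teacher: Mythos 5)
Your proposal is correct and follows essentially the same route as the paper's own proof: the same decomposition of the two generation-$1$ populations into the three sub-populations, the same identification of $Z$, $U$, $V$ as the hypergeometric/binomial type-$a$ counts, the same conservation laws $X_1^N N = U+Z$ and $Y_1^M M = (yM-Z)+V$, and the same conditioning on $\{Z=i\}$ using independence. One remark: your conservation law (which agrees with the paper's $Y_1^M = y + \frac{V-Z}{M}$) actually forces the third factor to be $\P_{x,y}\big(V=(\bar{y}-y)M+i\big)$ rather than the stated $\P_{x,y}(V=\bar{y}M+i)$, so the proposition as printed contains a typo rather than your argument containing a gap.
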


\begin{proof}
The random variables have a simple interpretation:
\begin{itemize}
\item 
$Z$ is the number of individuals of type $a$ that become active in the next generation or, equivalently, 
the number of individuals at generation $1$ that are offspring of an individual of type $a$ at generation 
$0$.
\item 
$U$ is the number of individuals that are offspring of individuals of type $a$ in the previous generation 
(and therefore are themselves of type $a$).
\item 
$V$ is the number of individuals that are offspring of individuals of type $a$ in the previous generation.
\end{itemize}
With this interpretation the distributions of $Z$, $U$ and $V$ are immediate from Definition~\ref{def:WFsb}. 
By construction, $ X_1^N = \frac{U+Z}{N}$ and $Y_1^M = y+\frac{V-Z}{M}$, and so the claim follows. 
\end{proof}

%%%%%%%%%%%%% SECTION 3 %%%%%%%%%%%%%%%%%%%%%%%%%%%%%

\section{A multi-colony extension}
\label{sec:stepstone}

In this section we analyse a spatial version of the model introduced in Section~\ref{sec:seedbankdef}. 
Namely, we add \emph{migration}, i.e., we allow individuals to adopt the type of individuals in colonies 
different from their own (in the literature this is called the stepping stone model). In Section~\ref{mig}
we define the model without mutation, in Section~\ref{mut} we add mutation. In Section~\ref{pid} we 
turn to our key object of interest, the probability that two individuals drawn randomly from two given 
colonies are identical by descent. We derive a formula for this probability involving convolutions of 
matrices. In Section~\ref{Fpid} we simplify this formula by turning to Fourier analysis, which leads to 
our first main theorem: Theorem~\ref{MainTheorem}.

%%%

\subsection{Migration}
\label{mig}

Consider the \emph{discrete torus} $\T$ in any dimension $d\in\N$. This may be identified with the 
lattice $\Z^d \cap [0,L]^d$, $L\in\N$, with periodic boundaries. At each lattice site there is a colony, 
which consists of an active population of size $N$ and a seed-bank of size $M$. At each generation, 
every individual chooses a colony according to a random walk transition kernel $p(x,y)$, $x,y \in\T$, 
and chooses from that colony an individual that was active in that colony in the previous generation. 
We assume that $p(x,y)$, $x,y \in\T$, depends only on the distance between the two colonies $x$ 
and $y$, not on their position. This makes our system translation invariant. An example for a transition 
kernel that satisfies this assumption is
\begin{equation}
\label{transkernel} 
p(x,y) = (1-\nu)\delta_{x,y} + \nu q(x,y), \qquad x,y\in\T, 
\end{equation}
with $\delta_{x,y}=\mathbbm{1}_{\{x=y\}}$ and $q(x,y)$, $x,y\in\T$, a random walk transition kernel
that controls the migration. The parameter $\nu\in (0,1]$ is the migration probability. An example is 
the uniform nearest-neighbour model
\begin{equation} 
\label{qkernel}
q(0,z) = \begin{cases} 
\frac{1}{2d}, &\text{if } \|z\|=1, \\ 
0, &\text{otherwise}, 
\end{cases}
\end{equation}
where $\| \cdot \|$ is the lattice norm. This corresponds to a simple random walk on $\T$.

%%%%%%%%%%%%%%%%%%% SQUARE LATTICE %%%%%%%%%%%%%%%%%%
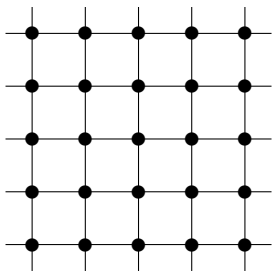
\begin{figure}[htbp]
\vspace{0.5cm}
\begin{center}
\setlength{\unitlength}{0.7cm}
\begin{picture}(12,5)(-3,0)
\put(1,0.5){\line(0,5){5}}
\put(2,0.5){\line(0,5){5}}
\put(3,0.5){\line(0,5){5}}
\put(4,0.5){\line(0,5){5}}
\put(5,0.5){\line(0,5){5}}
\put(0.5,1){\line(5,0){5}}
\put(0.5,2){\line(5,0){5}}
\put(0.5,3){\line(5,0){5}}
\put(0.5,4){\line(5,0){5}}
\put(0.5,5){\line(5,0){5}}
%%%
\put(1,1){\circle*{.25}}
\put(1,2){\circle*{.25}}
\put(1,3){\circle*{.25}}
\put(1,4){\circle*{.25}}
\put(1,5){\circle*{.25}}
\put(2,1){\circle*{.25}}
\put(2,2){\circle*{.25}}
\put(2,3){\circle*{.25}}
\put(2,4){\circle*{.25}}
\put(2,5){\circle*{.25}}
\put(3,1){\circle*{.25}}
\put(3,2){\circle*{.25}}
\put(3,3){\circle*{.25}}
\put(3,4){\circle*{.25}}
\put(3,5){\circle*{.25}}
\put(4,1){\circle*{.25}}
\put(4,2){\circle*{.25}}
\put(4,3){\circle*{.25}}
\put(4,4){\circle*{.25}}
\put(4,5){\circle*{.25}}
\put(5,1){\circle*{.25}}
\put(5,2){\circle*{.25}}
\put(5,3){\circle*{.25}}
\put(5,4){\circle*{.25}}
\put(5,5){\circle*{.25}}
\end{picture}
\end{center}
\vspace{-0.5cm}
\caption{\small A $5 \times 5$ torus, with edges connecting neighbouring vertices. The boundary is 
periodic: opposite ends horizontally and vertically are connected. Simple random walk corresponds 
to transitions north, east, south and west, each with probability $\tfrac14$.}
\label{fig-square}
\end{figure}
%%%%%%%%%%%%%% END FIGURE %%%%%%%%%%%%%%%%%%%%%%%%

We are interested, for the system \emph{in equilibrium}, to compute the probability $\psi( (x,a),(y,b) )$ 
that two individuals drawn uniformly at random from two colonies $x,y\in\T$ in states $a,b\in\{0,1\}$  
are \textit{identical by descent}, i.e., their lineages coalesce. The states $a,b\in\{0,1\}$ indicate 
whether the individual is drawn from the dormant population (state 0) or the active population (state 1), 
e.g.\ $(x,0)$ means that an individual is drawn from the seed-bank in colony $x\in\T$.  
If $x=y$, then we require the two individuals to be distinct. We want to find an expression 
for the 4-vector
\begin{equation} 
\label{Psivector} 
\Psi_{x,y} = 
\begin{pmatrix}  
\psi( (x,0),(y,0) ) \\
\psi( (x,0),(y,1) ) \\
\psi( (x,1),(y,0) ) \\
\psi( (x,1),(y,1) ) 
\end{pmatrix}, \qquad x,y\in\T. 
\end{equation}
Note that, since $p(x,y)$ depends on $x-y$ only, the same is true for $\Psi_{x,y}$. Naturally, 
since $\T$ and $N$ are finite we have 
\begin{equation}  
\Psi_{x,y} = 
\begin{pmatrix}  
1 \\
1 \\
1 \\
1 
\end{pmatrix}, \qquad x,y\in\T. 
\end{equation}
Indeed, no matter what colonies the individuals are drawn from, their lineages move in 
and out of the seed-bank and migrate to other colonies in $\T$. Eventually, going infinitely 
far back in time, the lineages coalesce. The problem becomes more interesting when we 
modify the dynamics to include \emph{mutation}. 

%%%

\subsection{Migration and mutation}
\label{mut}

Assume that at each generation, every individual with probability $\mu\in (0,1)$ mutates 
to a \emph{new type} and with probability $1-\mu$ does not mutate and proceeds to
become either active, respectively, dormant (with probability $\de$, respectively, $\e$). In 
addition, if an individual is active and remains active, then it chooses an individual from 
any colony $y\in\T$ (with probability $(1-\e) p(x,y)$) and adopts its type. More precisely, 
a dormant individual in the seed-bank may
\begin{itemize}
\item 
mutate to a new type, with probability $\mu$,
\item 
maintain its type and remain in the seed-bank, with probability $(1-\mu)(1-\de)$,
\item 
maintain its type and become active, with probability $(1-\mu)\de$.
\end{itemize}
Similarly, an active individual may
\begin{itemize}
\item 
mutate to a new type, with probability $\mu$,
\item 
remain active and choose a random ancestor from colony $y\in\T$ (possibly $y=x$), 
with probability $(1-\mu)(1-\e) p(x,y)$,
\item 
maintain its type and become dormant, with probability $(1-\mu)\e$.
\end{itemize}
Since a lineage is interrupted when a mutation occurs, we define two individuals to be 
\textit{identical by descent} if their lineages coalesce \emph{before} a mutation affects 
either lineage. Our goal is to compute the $4$-vector $\Psi_{x,y}$ in \eqref{Psivector} 
in the presence of mutation, again for the system \emph{in equilibrium}. This will be 
achieved in several steps, resulting in Theorem~\ref{MainTheorem} below. Note that 
individuals may change their type multiple times. What matters for $\Psi_{x,y}$ is that 
the lineages of the two individuals drawn at $x$ and $y$ meet without encountering 
a mutation.

%%%

\subsection{Computation of the probability to be identical by descent}
\label{pid}

We begin by deriving a recursive relation for the family $\{\Psi_{x,y}\}_{x,y\in\T}$.

\begin{Prop} 
\label{multicolonyprop1}
For $x,y\in\T, $
\begin{equation}
\label{multicolonyrecursion}
\Psi_{x,y} = \Phi_{x,y} + A_{x,y} \Psi_{0,0} + \sum_{w,z\in\T} B_{w-x,z-y} \Psi_{w,z},
\end{equation}
where 
\begin{align}
\label{Phimatrix}
\Phi_{x,y} 
=&\ (1-\mu)^2 (1-\epsilon)^2 \begin{pmatrix} 0 \\ 0 \\ 0 \\  p^2(x,y) \frac{1}{N} \end{pmatrix}, \\[0.2cm]
\label{Amatrix} 
A_{x,y} =&\ - (1-\mu)^2 (1-\epsilon)^2 
\begin{pmatrix}
0 & 0 & 0 & 0 \\
0 & 0 & 0 & 0 \\
0 & 0 & 0 & 0 \\
0 & 0 & 0 & p^2(x,y) \frac{1}{N}
\end{pmatrix},\\
\label{bbtilderecursion}
B_{w-x,z-y} =&\ C \delta_{x,w} \delta_{y,z} + D_{w-x,z-y} ,
\end{align}
with
\begin{eqnarray}
&&C = (1-\mu)^2 \times \nonumber \\ 
&&\qquad
\begin{pmatrix}
(1-\delta)^2 & (1-\delta) \epsilon & (1-\delta) \epsilon & \epsilon^2 \\
\delta (1-\delta) & 0 & \delta \epsilon & 0 \\
\delta (1-\delta) & \delta \epsilon & 0 & 0 \\
\delta^2 & 0 & 0 & 0
\end{pmatrix}, 
\label{Cmatrix} \\[0.2cm]
&&D_{w-x,z-y} = (1-\mu)^2 (1-\epsilon) \times \nonumber \\
&&\qquad
\begin{pmatrix}
0 & 0                          		& 0                          			& 0 \\
0 & (1-\delta)p(y,z)\delta_{x,w}  & 0                          			& \epsilon p(y,z) \de_{x,w}\\
0 & 0                          		& (1-\delta)p(x,w)\delta_{y,z} 	&  \epsilon p(x,w) \de_{y,z}\\
0 & \delta p(y,z)\delta_{x,	w}    	& \delta p(x,w)\delta_{y,z}          & (1-\epsilon) p(x,w) p(y,z)
\end{pmatrix}. 
\label{Dmatrix}
\end{eqnarray}
\end{Prop}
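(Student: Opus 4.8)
The plan is to prove \eqref{multicolonyrecursion} by a first-step (one-generation) analysis of the ancestral process of the two sampled lineages, exploiting that the system is in equilibrium and translation invariant (so that $\Psi_{x,y}$ depends only on $x-y$ and $\Psi_{w,w}=\Psi_{0,0}$ for every $w\in\T$). Fix the two sampled individuals at $(x,s_1)$ and $(y,s_2)$ with $s_1,s_2\in\{0,1\}$, and condition on what happens to each of their lineages in the most recent generation, reading off the one-step probabilities from the dynamics of Section~\ref{mut}: a lineage at an active individual either resamples an active ancestor in colony $w$ (with the migration weight $p(x,w)$ and the factor $1-\e$), or swaps with a dormant individual of its own colony, or is terminated by a mutation (probability $\mu$); a lineage at a dormant individual either stays dormant, or swaps with an active individual of its own colony, or is terminated by a mutation. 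Each surviving step carries a factor $1-\mu$, and being identical by descent requires that \emph{neither} lineage mutates, which produces the overall $(1-\mu)^2$ prefactors.

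Next I would split the outcome of this single generation into three mutually exclusive events. First, \emph{coalescence}: this can occur only when both lineages are active (the component $s_1=s_2=1$), both resample by migration into a common colony $w$, and then pick the \emph{same} active individual there, an event of probability $(1-\mu)^2(1-\e)^2\,\tfrac1N\sum_{w}p(x,w)p(y,w)=(1-\mu)^2(1-\e)^2\,p^2(x,y)\,\tfrac1N$, where the last equality uses the symmetry of $p$; since on this event the two individuals are certainly identical by descent, it contributes the constant $1$ and hence yields exactly the inhomogeneous vector $\Phi_{x,y}$ in \eqref{Phimatrix}. Second, \emph{mutation} of either lineage, which contributes $0$. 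Third, \emph{survival without coalescence}: both lineages avoid mutation and move to ancestors $(w,t_1),(z,t_2)$ that are not the same individual, after which the identity-by-descent probability is $\psi$ evaluated at the ancestral configuration, i.e.\ a term $\sum_{w,z}(\text{pair transition})\,\Psi_{w,z}$.

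To turn the third event into the matrices $C$ and $D$ I would use that, conditionally on no coalescence, the two lineages move \emph{independently}, so the pair transition factorises into a product of the two single-lineage weights. Sorting each single-lineage weight according to whether it carries a migration kernel $p$ (the active-to-active resampling step) or not (the dormant-to-dormant, active-to-dormant and dormant-to-active stay/swap steps, which keep the colony fixed and hence carry a Kronecker factor $\delta_{x,w}$ or $\delta_{y,z}$) gives precisely the decomposition $B_{w-x,z-y}=C\,\delta_{x,w}\delta_{y,z}+D_{w-x,z-y}$ of \eqref{bbtilderecursion}: $C$ in \eqref{Cmatrix} collects the terms in which \emph{both} lineages take a migration-free step, and $D$ in \eqref{Dmatrix} collects the terms in which \emph{at least one} lineage resamples with migration. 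Reading off the four possible single-lineage weights and forming all sixteen products reproduces the individual entries.

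The step I expect to be the main obstacle is the bookkeeping around coalescence and distinctness. The factorised sum $\sum_{w,z}B_{w-x,z-y}\Psi_{w,z}$ silently treats the two active lineages as independent even when they land in the \emph{same} colony $w=z$, and therefore counts the same-individual event (which is coalescence, already recorded in $\Phi$) as if it were ordinary continuation. This over-counting must be removed: by translation invariance the spurious term equals $(1-\mu)^2(1-\e)^2\,\tfrac1N\,p^2(x,y)\,\Psi_{0,0}$ in the both-active component, which is exactly the correction $A_{x,y}\Psi_{0,0}$ of \eqref{Amatrix}. Getting this term right---together with the convention that for $x=y$ the two sampled individuals must be distinct, and the careful tracking of the $\mu,\e,\de$ and $p$ weights through all sixteen entries---is where the argument has to be handled with care; the remaining algebra is routine.
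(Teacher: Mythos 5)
Your proposal is correct and follows essentially the same route as the paper: a one-generation (first-step) decomposition of the two ancestral lineages, in which the coalescence event yields $\Phi_{x,y}$, the factorized independent-lineage sum yields $\sum_{w,z}B_{w-x,z-y}\Psi_{w,z}$ with $B=C\delta+D$ sorted by the presence of migration kernels, and the same-individual over-count on the diagonal $w=z$ is removed by $A_{x,y}\Psi_{0,0}$ --- precisely the paper's add-and-subtract step, combined with translation invariance and $\sum_{z}p(x,z)p(y,z)=p^2(x,y)$. The only cosmetic difference is that the paper first writes the recursion for the finite-time probabilities $\Psi^{(n)}_{x,y}$ and then passes to equilibrium via ergodicity, whereas you perform the first-step analysis directly on the stationary quantity.
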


\begin{proof}
We begin by writing down a recursion relation for 
\begin{equation}
\Psi^{(n)}_{x,y} = 
\begin{pmatrix}  
\psi_n( (x,0),(y,0) ) \\
\psi_n( (x,0),(y,1) ) \\
\psi_n( (x,1),(y,0) ) \\
\psi_n( (x,1),(y,1) ) 
\end{pmatrix}, \qquad x,y\in\T,
\end{equation} 
the probability at time $n$ that two individuals randomly drawn from colonies $x,y$ 
are identical by descent. This reads as follows:
\begin{align}
\label{psinrec} 
&\psi_{n+1}( (x,0),(y,0) ) = (1-\mu)^2 \Big[ (1-\delta)^2 \psi_n( (x,0),(y,0) ) 
+ \epsilon^2 \psi_n( (x,1),(y,1) ) \nonumber\\
&\qquad + \epsilon (1-\delta) \left[ \psi_n( (x,1),(y,0) ) 
+ \psi_n( (x,0),(y,1) )\right] \Big],
\nonumber\\
&\psi_{n+1}( (x,1),(y,0) ) = (1-\mu)^2 \bigg[ \delta (1-\delta) \psi_n( (x,0),(y,0) ) 
+ \delta \epsilon \psi_n( (x,0),(y,1) )\nonumber \nonumber\\
&\left.\qquad + \sum_{w\in\T} (1-\epsilon) (1-\delta) p(x,w) \psi_n( (w,1),(y,0) )
+ \epsilon (1-\epsilon) p(x,w) \psi_n( (w,1),(y,1) )\right],
\nonumber\\
&\psi_{n+1}( (x,0),(y,1) ) = (1-\mu)^2 \bigg[ \delta (1-\delta) \psi_n( (x,0),(y,0) ) 
+ \delta \epsilon \psi_n( (x,1),(y,0) )\nonumber \nonumber\\
&\left. \qquad+ \sum_{z\in\T} (1-\epsilon) (1-\delta) p(y,z) \psi_n( (x,0),(z,1) ) 
+ \epsilon (1-\epsilon) p(y,z) \psi_n( (x,1),(z,1) )\right],
\nonumber\\
&\psi_{n+1}( (x,1),(y,1) ) = (1-\mu)^2 \bigg[ \delta^2 \psi_n( (x,0),(y,0) ) \nonumber\\
&\left.\qquad + \sum_{z\in\T} \delta(1-\epsilon) p(y,z) \psi_n( (x,0),(z,1) ) \right.\nonumber\\
&\left.\qquad + \sum_{w\in\T} \delta(1-\epsilon) p(x,w) \psi_n( (w,1),(y,0) )\right.\nonumber \\
&\left.\qquad + \sum_{\substack{w,z\in\T\\ w\neq z}} (1-\epsilon)^2 p(x,w) p(y,z) 
\psi_n( (w,1),(z,1) \nonumber)\right. \\
&\left.\qquad + \sum_{z\in\T} (1-\epsilon)^2 p(x,z) p(y,z) \left[ \frac{1}{N} 
+ \left(1-\frac{1}{N}\right) \psi_n( (z,1),(z,1) ) \right] \right].
\end{align}
The reasoning behind the above expressions comes from considering the possible choices 
of the two individuals. In the first expression, for example, the drawn individuals are both in 
the seed-bank and there are three scenarios: 
\begin{itemize}
\item 
they were both in the seed bank in the previous generation (with probability $1-\delta$ 
each, independently of each other),
\item 
they are both offspring of active individuals (with probability $\epsilon$ each),
\item 
one was in the seed-bank in the previous generation and did not become active (with 
probability $1-\delta$), and the other is offspring of an active individual (with probability 
$\epsilon$).
\end{itemize}
Note that if individuals are in the seed-bank, then the individuals they are offsping of cannot 
be from a different colony, which is why the transition kernels do not always appear in the 
recursive relations. When they appear, they are always multiplied by $1-\epsilon$, since the 
transition kernels are probabilities \textit{conditional} on the event that the individuals were 
not in the seed-bank in the previous generation. In the last term of $\psi_{n+1}( (x,1),(y,1) )$ 
we are looking at the event in which the ancestors of the two individuals are in the same colony: 
either they are the same individual (with probability $1/N$ and the iteration ends) or they are 
two distinct individuals (with probability $1-\frac{1}{N}$). In this equation we add and subtract 
the term $(1-\mu)^2 (1-\epsilon)^2 p(x,w) p(y,z) \psi_n( (w,1),(z,1) )$ for $w=z$ (the expression in 
the fourth line) to obtain
\begin{align}
&\psi_{n+1}( (x,1),(y,1) ) = (1-\mu)^2 \bigg[ \delta^2 \psi_n( (x,0),(y,0) ) \nonumber\\
&\left. \qquad + \sum_{z\in\T} \delta(1-\epsilon) p(y,z) \psi_n( (x,0),(z,1) ) \right.\nonumber\\
&\left. \qquad + \sum_{w\in\T} \delta(1-\epsilon) p(x,w) \psi_n( (w,1),(y,0) )\right.\nonumber \\
&\left. \qquad + \sum_{w,z\in\T} (1-\epsilon)^2 p(x,w) p(y,z) \psi_n( (w,1),(z,1) \nonumber)\right. \\
&\left. \qquad + \sum_{z\in\T} (1-\epsilon)^2 p(x,z) p(y,z) \left[ \frac{1}{N} 
- \frac{1}{N} \psi_n( (z,1),(z,1) ) \right] \right].
\end{align}
By ergodicity, we have
\begin{equation}
 \lim_{n\to\infty} \Psi_{x,y}^{(n)} = \Psi_{x,y}\ \ \ \forall x,y\in\T.
\end{equation}
Therefore, in equilibrium, we may drop the time indices from the above expressions and obtain
\begin{equation}
\begin{array}{ll}
\begin{pmatrix}  
\psi( (x,0),(y,0) ) \\
\psi( (x,0),(y,1) ) \\
\psi( (x,1),(y,0) ) \\
\psi( (x,1),(y,1) ) 
\end{pmatrix}
\\ 
&\\
&\hspace{-14cm}
= (1-\mu)^2 
\begin{pmatrix}
(1-\delta)^2 & (1-\delta) \epsilon & (1-\delta) \epsilon & \epsilon^2 \\
\delta (1-\delta) & 0 & \delta \epsilon & 0 \\
\delta (1-\delta) & \delta \epsilon & 0 & 0 \\
\delta^2 & 0 & 0 & 0
\end{pmatrix}
\begin{pmatrix}  
\psi( (x,0),(y,0) ) \\
\psi( (x,0),(y,1) ) \\
\psi( (x,1),(y,0) ) \\
\psi( (x,1),(y,1) ) 
\end{pmatrix} 
\\
&\\
&\hspace{-14cm}
+\sum_{w,z\in\T} (1-\mu)^2 (1-\epsilon) \times\\
\begin{pmatrix}
0 & 0                          		& 0                          			& 0 \\
0 & (1-\delta)p(y,z)\delta_{x,w}  & 0                          			& \epsilon p(y,z) \de_{x,w} \\
0 & 0                          		& (1-\delta)p(x,w)\delta_{y,z} 	&  \epsilon p(x,w) \de_{y,z}\\
0 & \delta p(y,z)\delta_{x,	w}    	& \delta p(x,w)\delta_{y,z}          & (1-\epsilon) p(x,w) p(y,z)
\end{pmatrix} 
\begin{pmatrix}  
\psi( (w,0),(z,0) ) \\
\psi( (w,0),(z,1) ) \\
\psi( (w,1),(z,0) ) \\
\psi( (w,1),(z,1) ) 
\end{pmatrix} 
\\
&\\
&\hspace{-14cm}
- \sum_{z\in\T} (1-\mu)^2 (1-\epsilon)^2 \times\\
\begin{pmatrix}
0 & 0 & 0 & 0 \\
0 & 0 & 0 & 0 \\
0 & 0 & 0 & 0 \\
0 & 0 & 0 &  p(x,z)p(y,z)\frac{1}{N}
\end{pmatrix} 
\begin{pmatrix}  
\psi( (z,0),(z,0) ) \\
\psi( (z,0),(z,1) ) \\
\psi( (z,1),(z,0) ) \\
\psi( (z,1),(z,1) ) 
\end{pmatrix} 
\\
&\\
&\hspace{-14cm}
+ \sum_{z\in\T}  (1-\mu)^2 (1-\e)^2 
\begin{pmatrix}
0 \\ 0 \\ 0 \\  p(x,z)p(y,z) \frac{1}{N}
\end{pmatrix}.
\end{array}
\end{equation}
After using that $\sum_{z\in\T} p(x,z)p(y,z) = \sum_{z\in\T} p(x,z)p(z,y) = p^2(x,y)$, and 
$\Psi_{z,z} = \Psi_{0,0}$ by translation invariance, we obtain the expression in 
\eqref{multicolonyrecursion}.
\end{proof}

Now that we have a recursive relation for $\Psi_{x,y}$ as expressed in Proposition~\ref{multicolonyprop1}, 
we proceed to solve this relation to find a closed form expression for $\Psi_{x,y}$. To do so, we iterate 
\eqref{multicolonyrecursion} and, after noting that the last summand tends to 0 as the number of iterations 
tends to infinity (since $0<\mu< 1$), we obtain the following expression.

\begin{Prop}
For all $x,y\in\T$,
\begin{equation}
\label{mastereq}
\Psi_{x,y} = \frac{1-\Psi_{0,0}^{(4)}}{N} \sum_{n\in\N_0}  (B\ast\Gamma)^{(n)}_{x,y},
\end{equation}
where $\Psi_{0,0}^{(4)}$ is the $4$-th entry of the $4$-vector $\Psi_{0,0}$ and (with $\mathds{1}$ the 
$(4 \times 4)$-identity matrix)
\begin{align}
(B \ast \Gamma)^{(n)}_{x,y} =&\ \sum_{w,z\in\T} B^{(n)}_{w-x,z-y} \Gamma_{w,z}, \\
\label{Brec} B^{(n)}_{w-x,z-y} =&\ \sum_{w',z'\in\T} B^{(n-1)}_{w'-x,z'-y} B_{w-w',z-z'}, \\
\label{Bzero} B^{(0)}_{w-x,z-y} =&\ \mathds{1} \delta_{x,w} \delta_{y,z}, \\
\label{Gamma} \Gamma_{w,z} =&\ (1-\mu)^2 (1-\e)^2 
\begin{pmatrix} 0 \\ 0 \\ 0 \\  p^2(w,z) \end{pmatrix}.
\end{align}
\end{Prop}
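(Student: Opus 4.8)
The plan is to solve the affine fixed-point relation \eqref{multicolonyrecursion} by iteration, as anticipated in the text; the only real work is to justify convergence of the iteration. First I would merge the two inhomogeneous terms. Since $\Phi_{x,y}$ in \eqref{Phimatrix} and $A_{x,y}$ in \eqref{Amatrix} are supported on the fourth coordinate, respectively the $(4,4)$ entry, the product $A_{x,y}\Psi_{0,0}$ only sees the fourth component $\Psi_{0,0}^{(4)}$ of $\Psi_{0,0}$, and a one-line computation gives
\[
\Phi_{x,y}+A_{x,y}\Psi_{0,0}=\frac{1-\Psi_{0,0}^{(4)}}{N}\,\Gamma_{x,y},
\]
with $\Gamma$ as in \eqref{Gamma}. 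Setting $F_{x,y}:=\tfrac{1-\Psi_{0,0}^{(4)}}{N}\,\Gamma_{x,y}$, the recursion \eqref{multicolonyrecursion} reads $\Psi_{x,y}=F_{x,y}+\sum_{w,z\in\T}B_{w-x,z-y}\,\Psi_{w,z}$.

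Next I would iterate and prove by induction on $n$ that
\[
\Psi_{x,y}=\sum_{k=0}^{n}\big(B^{(k)}\ast F\big)_{x,y}+\big(B^{(n+1)}\ast\Psi\big)_{x,y},
\]
where $\big(B^{(k)}\ast G\big)_{x,y}=\sum_{w,z}B^{(k)}_{w-x,z-y}G_{w,z}$ for any field $G$. The case $n=0$ is the recursion itself, using $B^{(0)}_{w-x,z-y}=\mathds{1}\,\delta_{x,w}\delta_{y,z}$ from \eqref{Bzero}; the inductive step inserts the recursion into the remainder term and collapses the resulting double sum via the semigroup identity \eqref{Brec}, raising the convolution power by one. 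Because $F$ is a fixed scalar multiple of $\Gamma$, linearity of the convolution turns $\big(B^{(k)}\ast F\big)_{x,y}$ into $\tfrac{1-\Psi_{0,0}^{(4)}}{N}\,(B\ast\Gamma)^{(k)}_{x,y}$, so \eqref{mastereq} will follow as soon as the remainder is shown to vanish.

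The one nontrivial point, and the main obstacle, is to prove $\big(B^{(n+1)}\ast\Psi\big)_{x,y}\to\mathbf{0}$ as $n\to\infty$. All entries of $B$ are nonnegative, and the entries of $\Psi$ are probabilities, so $\mathbf{0}\le\Psi_{w,z}\le\mathbf{1}$ coordinatewise; hence
\[
\mathbf{0}\le\big(B^{(n+1)}\ast\Psi\big)_{x,y}\le\Big(\sum_{w,z\in\T}B^{(n+1)}_{w-x,z-y}\Big)\mathbf{1}=\bar B^{\,n+1}\mathbf{1},
\]
where $\bar B:=\sum_{w,z}B_{w-x,z-y}$ is the total-mass matrix; that the total mass of the convolution power $B^{(n+1)}$ equals the matrix power $\bar B^{\,n+1}$ is immediate from \eqref{Brec} and translation invariance of $B$. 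The key computation is $\bar B=(1-\mu)^2 M_0$, where $M_0$ is obtained by summing $C$ of \eqref{Cmatrix} and $D$ of \eqref{Dmatrix} over $w,z$ and using that $p$ is stochastic, $\sum_{z}p(\cdot,z)=1$. A direct check shows that $M_0$ is \emph{column}-stochastic: each of its four columns sums to $1$. Therefore every power $M_0^{\,n}$ is again column-stochastic, its entries lie in $[0,1]$, so $M_0^{\,n}\mathbf{1}$ is bounded uniformly in $n$, whence $\bar B^{\,n+1}\mathbf{1}=(1-\mu)^{2(n+1)}M_0^{\,n+1}\mathbf{1}\to\mathbf{0}$ exactly because $0<\mu<1$.

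Letting $n\to\infty$ then yields $\Psi_{x,y}=\sum_{k\in\N_0}\big(B^{(k)}\ast F\big)_{x,y}=\tfrac{1-\Psi_{0,0}^{(4)}}{N}\sum_{k\in\N_0}(B\ast\Gamma)^{(k)}_{x,y}$, the series converging geometrically since $\|(B\ast\Gamma)^{(k)}_{x,y}\|=O\big((1-\mu)^{2k}\big)$. The subtlety to flag is that one must use the \emph{column} sums of $M_0$ rather than its row sums: the row sums equal $(1\pm(\e-\de))^2$ and $1-(\e-\de)^2$ and can exceed $1$, so the naive row-sum estimate for the spectral radius fails, whereas the column sums, which encode conservation of mass for the backward pair-of-lineages dynamics, give exactly $\rho(M_0)=1$ and hence the needed contraction factor $(1-\mu)^2<1$.
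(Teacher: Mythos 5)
Your proposal is correct and follows essentially the same route as the paper's proof: merge the two inhomogeneous terms into $\Phi_{x,y}+A_{x,y}\Psi_{0,0}=\frac{1-\Psi_{0,0}^{(4)}}{N}\,\Gamma_{x,y}$, unroll the recursion \eqref{multicolonyrecursion} to get the partial sums plus a remainder $(B^{(n+1)}\ast\Psi)_{x,y}$, and let $n\to\infty$. The only difference is in the treatment of the remainder: the paper dismisses it with the parenthetical remark that $B^{(n)}$ carries a factor $(1-\mu)^{2n}$ and ``each term is finite'', whereas you supply the missing uniform bound on $B^{(n)}/(1-\mu)^{2n}$ by observing that the normalized total-mass matrix $M_0$ is column-stochastic (so its powers have entries in $[0,1]$) — a genuine sharpening of a step the paper leaves implicit, and your remark that the row sums $(1\pm(\e-\de))^2$ can exceed $1$ correctly explains why some such argument is actually needed.
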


\begin{proof} 
The claim follows by repeatedly substituting $\Psi_{w,z}$ into \eqref{multicolonyrecursion}. 
Indeed,
\begin{align}
\Psi_{x,y} =&\: \Phi_{x,y} + A_{x,y} \Psi_{0,0} 
+ \sum_{w,z\in\T} B_{w-x,z-y} \Psi_{w,z} \nonumber\\
=&\: \Phi_{x,y} + D_{x,y} \Psi_{0,0} 
+ \sum_{w,z\in\T} B_{w-x,z-y} \left( \Phi_{w,z} + A_{w,z} \Psi_{0,0} 
+ \sum_{w',z'\in\T} B_{w'-z,z'-z} \Psi_{w',z'} \right) \nonumber\\
=&\: \left( \mathds{1}\delta_{xw}\delta_{yz} + \sum_{w,z\in\T} B_{w-x,z-y} \right) \Phi_{w,z} 
+ \left( \mathds{1}\delta_{xw}\delta_{yz} + \sum_{w,z\in\T} B_{w-x,z-y} \right) A_{w,z} \Psi_{0,0} 
\nonumber \\
& + \sum_{w,z\in\T} \sum_{w',z'\in\T} B_{w-x,z-y} B_{w'-z,z'-z} \Psi_{w',z'} \nonumber\\
=&\: \sum_{k=0}^1 \sum_{w,z\in\T} B^{(k)}_{w-x,z-y}  \left( \Phi_{w,z} 
+ A_{w,z} \Psi_{0,0} \right) \nonumber \\
& +   \sum_{w'',z''\in\T}  B^{(2)}_{w''-x,z''-y} \Psi_{w'',z''},
\end{align}
where $B^{(0)}_{w-x,z-y}$ is defined by \eqref{Bzero} and $B^{(n)}_{w-x,z-y}$ is defined 
recursively as in \eqref{Brec}. After $n$ substitutions we obtain
\begin{equation}
\Psi_{x,y} = \sum_{k=0}^n \sum_{w,z\in\T} B^{(k)}_{w-x,z-y} \left( \Phi_{w,z} 
+ A_{w,z} \Psi_{0,0} \right) +   \sum_{w'',z''\in\T}  B^{(n+1)}_{w''-x,z''-y} \Psi_{w'',z''}.
\end{equation}
Letting $n\to\infty$ and noting that $\lim_{n\to\infty} B^{(n)}_{w-x,z-y} = 0$ (each term 
is finite and is multiplied by $(1-\mu)^{2n}$ with $0<\mu<1$), we see that the summand in 
the second line of the last equality tends to 0. We therefore obtain
\begin{align}
\label{mastereqpsi0}
\Psi_{x,y} =&\ \sum_{k\in\N_0} \sum_{w,z\in\T} B^{(k)}_{w-x,z-y} 
\left( \Phi_{w,z} + A_{w,z} \Psi_{0,0} \right). 
\end{align}
We can rewrite
\begin{align}
\Phi_{w,z} + A_{w,z} \Psi_{0,0} = (1-\mu)^2 (1-\e)^2 
\begin{pmatrix} 0 \\ 0 \\ 0 \\ \frac{1-\Psi_{0,0}^{(4)}}{N} p^2(w,z) \end{pmatrix} 
= \frac{1-\Psi_{0,0}^{(4)}}{N}\, \Gamma_{w,z}
\end{align}
with $\Gamma_{w,z}$ given by \eqref{Gamma}, and the claim follows.
\end{proof}

Taking $x=y=0$ in \eqref{mastereqpsi0}, we get
\begin{equation}
\Psi_{0,0} = \left(\mathds{1}- \sum_{n\in\N_0}  (B\ast A)^{(n)}_{0,0}\right)^{-1 } 
\left(\sum_{n\in\N_0} (B\ast\Phi)^{(n)}_{0,0}\right).
\end{equation}
This can be substituted into \eqref{mastereq} to obtain an explicit expression for $\Psi_{x,y}$. 
Since this expression contains convolutions, we turn to Fourier analysis to gain more insight 
into the properties of $\Psi_{x,y}$.

%%%

\subsection{Fourier analysis}
\label{Fpid}

Let $\hat{\T} = \{ 0,\frac{1}{L}, \ldots, \frac{L-1}{L}\}^d$. Define for $f\colon\,\T\times\T \mapsto \R$, 
$x,y \in \T$ and $\theta,\eta \in \hat{\T}$,
\begin{align}
\label{discFourier}
\hat{f}(\theta,\eta) 
=&\ \sum_{x,y\in \T} f_{x,y} \: e^{2\pi i ( x\cdot\theta + y\cdot \eta)}, \\
\label{discFourierinv}
f_{x,y} =&\ \frac{1}{|\hat{\T}|^2} \sum_{\theta,\eta \in \hat{\T}} \hat{f}(\theta,\eta) \: 
e^{-2\pi i (x\cdot \theta + y\cdot \eta)}.
\end{align}

\begin{Prop}
For $\theta,\eta \in \hat{\T}$,
\begin{equation}
\label{fourierPsi}
\hat{\Psi}(\theta,\eta) =  \frac{1-\Psi_{0,0}^{(4)}}{N} 
\big( \mathds{1} -  \hat{B}(\theta,\eta)  \big)^{-1} \hat{\Gamma}(\theta,\eta).
\end{equation}
\end{Prop}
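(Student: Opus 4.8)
The plan is to apply the Fourier transform \eqref{discFourier} to the master equation \eqref{mastereq} and to exploit that, in the Fourier variables, the convolutions collapse into ordinary matrix products, turning the infinite sum over $n$ into a geometric series of $(4\times 4)$-matrices. Concretely, I would first note that the transform is a finite linear map (since $\hat{\T}$ is finite), so it may be taken termwise in the convergent series $\sum_{n\in\N_0}(B\ast\Gamma)^{(n)}_{x,y}$, and then compute $\widehat{(B\ast\Gamma)^{(n)}}(\theta,\eta)$ for each $n$.

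The heart of the argument is a two-step convolution theorem. First I would show, by induction on $n$ using \eqref{Brec}, that the iterated kernel $B^{(n)}$, regarded as a function of the difference variables $(u,v)=(w-x,z-y)$, is the $n$-fold ordinary convolution of $B$ with itself, $B^{(n)}_{u,v}=\sum_{a,b}B^{(n-1)}_{a,b}B_{u-a,v-b}$; the convolution theorem then gives $\widehat{B^{(n)}}(\theta,\eta)=\hat B(\theta,\eta)^n$, with the matrix order preserved. Next, the outer pairing $(B\ast\Gamma)^{(n)}_{x,y}=\sum_{w,z}B^{(n)}_{w-x,z-y}\Gamma_{w,z}$ transforms, after the substitution $u=w-x,\ v=z-y$, into $\widehat{B^{(n)}}(-\theta,-\eta)\,\hat\Gamma(\theta,\eta)$. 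Here one uses that the kernel is even, $B_{u,v}=B_{-u,-v}$, which follows from \eqref{Cmatrix}--\eqref{Dmatrix} together with the symmetry $p(0,u)=p(0,-u)$ (the kernel $p$ depends only on the torus distance); hence $\hat B(-\theta,-\eta)=\hat B(\theta,\eta)$ and $\widehat{(B\ast\Gamma)^{(n)}}(\theta,\eta)=\hat B(\theta,\eta)^n\hat\Gamma(\theta,\eta)$. Summing over $n$ then yields $\big(\mathds{1}-\hat B(\theta,\eta)\big)^{-1}\hat\Gamma(\theta,\eta)$, which is \eqref{fourierPsi}.

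The main obstacle is to justify the geometric series, i.e.\ to prove that $\mathds{1}-\hat B(\theta,\eta)$ is invertible and $\sum_{n\in\N_0}\hat B(\theta,\eta)^n=(\mathds{1}-\hat B(\theta,\eta))^{-1}$, uniformly in $(\theta,\eta)$. For this I would bound the spectral radius of $\hat B(\theta,\eta)$ via a matrix norm. Factoring out the common $(1-\mu)^2$ and writing $\hat B(\theta,\eta)=C+\hat D(\theta,\eta)$ (with $C$ independent of $(\theta,\eta)$), and replacing each occurrence of $p$ in $\hat D$ by $\hat p(\theta)=\sum_u p(0,u)e^{2\pi i u\cdot\theta}$ or by $\hat p(\eta)$ (so that $|\hat p|\le\sum_u p(0,u)=1$), I would check that the four absolute column sums of $(1-\mu)^{-2}\hat B(\theta,\eta)$ are each $\le 1$: at $\hat p\equiv 1$ the columns of $(1-\mu)^{-2}\hat B(0,0)$ are convex combinations summing to exactly $1$, and passing to moduli can only decrease the sums. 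Hence the $\ell^1$ operator norm satisfies $\|\hat B(\theta,\eta)\|_1\le(1-\mu)^2<1$ for every $(\theta,\eta)$, so $\rho(\hat B(\theta,\eta))<1$, the Neumann series converges in norm, and $\mathds{1}-\hat B(\theta,\eta)$ is invertible; this bound also retroactively justifies the termwise transform and the convergence of the original series. The one delicate point is the bookkeeping of the column sums, which mixes the mutation-free $C$-entries with the migration-weighted $D$-entries, but column by column it reduces to the identities $(1-\delta)+\delta=1$ and $\e+(1-\e)=1$.
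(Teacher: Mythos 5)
Your proposal is correct and follows essentially the same route as the paper's proof: Fourier transform the master equation \eqref{mastereq}, use the convolution theorem and the recursion \eqref{Brec} to identify $\widehat{B^{(n)}}(\theta,\eta)=\big(\hat{B}(\theta,\eta)\big)^n$, and sum the resulting geometric series into $\big(\mathds{1}-\hat{B}(\theta,\eta)\big)^{-1}\hat{\Gamma}(\theta,\eta)$. The two points you elaborate on --- the evenness $B_{u,v}=B_{-u,-v}$ (from the distance-dependence of $p$) needed because the outer pairing is a correlation rather than a convolution, and the column-sum bound $\|\hat{B}(\theta,\eta)\|_1\le(1-\mu)^2<1$ justifying the Neumann series --- are details the paper leaves implicit, and your verification of both is accurate.
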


\begin{proof}
By the linearity of the Fourier transform and the convolution theorem, we get from 
\eqref{mastereq} that
\begin{equation}
\label{fouriermastereq}
\hat{\Psi}(\theta,\eta) =  \frac{1-\Psi_{0,0}^{(4)}}{N} 
\sum_{n\in\N_0} \hat{B}^{(n)}(\theta,\eta)\, \hat{\Gamma}(\theta,\eta).
\end{equation}
Since $B^{(n)}_{x-w,y-z}$ is defined by the recursion in \eqref{Brec}, we have
\begin{equation}
\hat{B}^{(n)}(\theta,\eta) = \hat{B}^{(n-1)}(\theta,\eta) \hat{B}(\theta,\eta) 
=  \big( \hat{B}(\theta,\eta) \big)^n,
\end{equation}
and therefore
\begin{equation}
\label{fouriergamma}
\sum_{n\in\N_0} \hat{B}^{(n)}(\theta,\eta) \hat{\Gamma}(\theta,\eta)  
= \sum_{n\in\N_0} \big( \hat{B}(\theta,\eta) \big)^n \hat{\Gamma}(\theta,\eta) 
= \big( \mathds{1} - \hat{B}(\theta,\eta)  \big)^{-1} \hat{\Gamma}(\theta,\eta).
\end{equation}
The claim follows after substitution of \eqref{fouriergamma} into \eqref{fouriermastereq}.
\end{proof}

Our next objective is to compute the right-hand side of \eqref{fourierPsi}. We obtain
\begin{align}
\label{Bhatrep}
\hat{B}(\theta,\eta) = \sum_{u,v\in\T} B_{u,v} \: e^{2\pi i (\theta\cdot u + \eta \cdot v)} 
= \sum_{u,v\in\T} \left( C \: \de_{u,0} \: \de_{v,0} + D_{u,v} \right) 
e^{2\pi i (\theta\cdot u + \eta \cdot v)} = C + \hat{D}(\theta,\eta),
\end{align}
where $C$ is given in \eqref{Cmatrix} and
\begin{equation}
\label{Dhatrep}
\hat{D}(\theta,\eta) = (1-\mu)^2 (1-\e) 
\begin{pmatrix} 
0	& 0				& 0				& 0 \\
0	& (1-\de) \hat{p}(\eta)	& 0				& \e \hat{p}(\eta) \\
0 	& 0				& (1-\de) \hat{p}(\theta)	& \e \hat{p}(\theta) \\
0	& \de \hat{p}(\eta)		& \de \hat{p}(\theta)	& (1-\e) \hat{p}(\theta) \hat{p}(\eta)
\end{pmatrix},
\end{equation}
with
\begin{equation}
\hat{p}(\theta) = \sum_{z\in\T} p(0,z) e^{2\pi i \theta\cdot z}.
\end{equation}
Computing $( \mathds{1} -  \hat{B}(\theta,\eta) )^{-1}$, we find
\begin{equation}
\label{oneminusbinverse}
\big( \mathds{1} -  \hat{B}(\theta,\eta)  \big)^{-1} = \frac{1}{r_0(\theta,\eta)} 
\begin{pmatrix}
r_{1,1}(\theta,\eta) & r_{1,2}(\theta,\eta) & r_{1,3}(\theta,\eta) & r_{1,4}(\theta,\eta) \\
r_{2,1}(\theta,\eta) & r_{2,2}(\theta,\eta) & r_{2,3}(\theta,\eta) & r_{2,4}(\theta,\eta) \\
r_{3,1}(\theta,\eta) & r_{3,2}(\theta,\eta) & r_{3,3}(\theta,\eta) & r_{3,4}(\theta,\eta) \\
r_{4,1}(\theta,\eta) & r_{4,2}(\theta,\eta) & r_{4,3}(\theta,\eta) & r_{4,4}(\theta,\eta)
\end{pmatrix},
\end{equation}
where the 17 functions in the above expression are polynomials of degree $\le 4$ in $\hat{p}(\theta)$ 
and $\hat{p}(\eta)$ whose coefficients depend on the parameters $\delta,\e$. Since 
$( \mathds{1} - \hat{B}(\theta,\eta))^{-1}$ pre-multiplies $\hat{\Gamma}(\theta,\eta)$, whose first 
three entries are $0$, we only need the entries in the fourth column. These are given by the 
following, where we abbreviate $m=(1-\mu)^2$:
\begin{align}
\label{oneminusbhat}
r_0(\theta,\eta) 
= &\ (1-m^2\de^2\e^2)\Big( 1-m \Big( 1-\de \big( 2-2\e-\de (1-m\e^2) \big) \Big) \Big) \nonumber \\
&\ - m^2(1-\e)^2 \hat{p}(\eta)^2 \big( \de\e - (1-\de)(1-\e)\hat{p}(\theta) \big) \nonumber \\ 
&\qquad\times\Big( 1-m(1-\de(2-\de-\e)) 
-m\big(1-m(1-\de)^2\big) (1-\de)(1-\e) \hat{p}(\theta) \Big) \nonumber \\
&\ -m(1-\e)(1-\de) \bigg( 1-m \Big(1-\de\big( 2-\e+\de (1-m\e^2 (1-m\de\e)) \big)\Big) \bigg) 
\big( \hat{p}(\eta)+\hat{p}(\theta) \big) \nonumber \\
&\ +m^2\de\e(1-\e)^2 \big( 1-m \big( 1-\de(2-\de-\e) \big) \big) \hat{p}(\theta)^2 \nonumber \\
&\ -m(1-\e)^2 \hat{p}(\eta) \hat{p}(\theta) \Big( \big(1-m(1-\de)^2\big)^2 
-2m\de\e+m^2\de^2\e^2 \nonumber\\
&\qquad -m(1-\de)(1-\e) \big( 1-m(1-\de)^2 (1+m\de\e)\big) \hat{p}(\theta) \Big)
\end{align}
and
\begin{align}
r_{1,4}(\theta,\eta) = &\ m\e^2 \big( (1-m\de\e)^2 - m^2 (1-\de)^2 (1-\e)^2 \hat{p}(\theta) 
\hat{p}(\eta) \big), \nonumber \\
r_{2,4}(\theta,\eta) = &\ m\e \Big( m\de\e(1-\de)(1-m\de\e) + (1-\e) \hat{p}(\eta) 
\Big( 1-m(1-\de(2-\de-\e)) \nonumber \\ 
&\qquad -m(1-m(1-\de)^2)(1-\de)(1-\e) \hat{p}(\theta) \Big) \Big), \nonumber \\
r_{3,4}(\theta,\eta) = &\ m\e \Big( m\de\e(1-\de)(1-m\de\e) + (1-\e) \hat{p}(\theta) 
\Big( 1-m(1-\de(2-\de-\e)) \nonumber \\ 
&\qquad -m(1-m(1-\de)^2)(1-\de)(1-\e) \hat{p}(\eta) \Big) \Big), \nonumber \\
r_{4,4}(\theta,\eta) = &\ -m^2\de\e(1-\de)^2 
\big( 1-m\de\e-(1-\de)(1-\e)\hat{p}(\eta) \big)  \nonumber \\
&\qquad +\Big( 1-m(1-\de(2-\de-\e)) -m\big(1-m(1-\de)^2\big)(1-\de)(1-\e)\hat{p}(\eta) \Big) \nonumber \\
&\qquad\qquad \times\Big( 1-m\de\e-m(1-\de)(1-\e)\hat{p}(\theta) \Big).
\label{rindices}
\end{align}

Looking at $\hat{\Gamma}(\theta,\eta)$, we have
\begin{align}
\label{gammahattheta}
\hat{\Gamma}(\theta,\eta)
=& \ (1-\mu)^2 \sum_{w,z\in \T} \Gamma_{w,z} e^{2\pi i (\theta \cdot w + \eta \cdot z)}\nonumber\\ 
=&\ (1-\mu)^2 \sum_{w,z\in \T} \begin{pmatrix} 0 \\ 0 \\ 0 \\ (1-\e)^2 \frac{1}{N} p^2(w,z) \end{pmatrix} 
e^{2\pi i (\theta \cdot w + \eta \cdot z) } \nonumber\\
=&\ \frac{1}{N} (1-\mu)^2 (1-\e)^2 
\begin{pmatrix} 0 \\ 0 \\ 0 \\   \hat{p}(\theta) \hat{p}(\eta)  
\end{pmatrix}  \delta_{\theta,-\eta}.
\end{align}
Therefore we get 
\begin{align}
\label{onembhatphihat}
\big( \mathds{1} -  \hat{B}(\theta,\eta)  \big)^{-1} \hat{\Gamma}(\theta,\eta)  
=&\ \frac{(1-\mu)^2 (1-\e)^2}{N} \frac{1}{r_0(\theta,\eta)} \begin{pmatrix}  
r_{1,4}(\theta,\eta) \\
r_{2,4}(\theta,\eta) \\
r_{3,4}(\theta,\eta) \\
r_{4,4}(\theta,\eta) 
\end{pmatrix} \hat{p}(\theta) \hat{p}(\eta)  \delta_{\theta,-\eta},
\end{align}
Note that, because of the multiplicative factor $\de_{\theta,-\eta}$, $\hat{\Psi}(\theta,\eta)$ 
in \eqref{fourierPsi} actually depends on $\theta$ only. This is in agreement with the fact 
that $\Psi_{x,y}$ actually depends on $x-y$ only.  Henceforth we write $\hat{\Psi}(\theta,-\theta)
=\hat{\Psi}(\theta)$, and similarly for the other symbols.

At this point, all the terms in \eqref{fourierPsi} are known objects, except for $\Psi_{0,0}$. In order 
to compute $\Psi_{0,0}$, we take the Fourier transform of \eqref{mastereqpsi0} to obtain
\begin{equation}
\label{psihatforpsi00}
\hat{\Psi}(\theta) =  (\mathds{1} - \hat{B}(\theta))^{-1} 
\big( \hat{\Phi}(\theta) + \hat{A}(\theta) \Psi_{0,0}\big).
\end{equation}
We then use the Fourier inversion formula \eqref{discFourierinv}, which gives
\begin{equation}
\frac{1}{|\hat{\T}|} \sum_{\theta \in \hat{\T}} \hat{\Psi}(\theta)  = \Psi_{0,0}.
\end{equation}
Substitution of \eqref{psihatforpsi00} therefore yields
\begin{equation}
\label{psi00}
\Psi_{0,0} = \left( \mathds{1} - \frac{1}{|\hat{\T}|} \sum_{\theta \in\hat{\T}}  (\mathds{1} 
- \hat{B}(\theta))^{-1} \hat{A}(\theta)\right)^{-1} 
\left(\frac{1}{|\hat{\T}|} \sum_{\theta \in \hat{\T}}  (\mathds{1} 
- \hat{B}(\theta))^{-1} \hat{\Phi}(\theta)\right).
\end{equation}
 
With \eqref{onembhatphihat} and \eqref{psi00} we have obtained an explicit expression 
for $\hat{\Psi}(\theta)$ in \eqref{fourierPsi}, which we summarise in a theorem.
Abbreviate
\begin{equation}
K = \frac{(1-\mu)^2 (1-\e)^2}{N},
\end{equation}
let
\begin{equation}
\label{si4def}
s_{i,4}(\theta) = K\, \hat{p}(\theta)^2\,\frac{r_{i,4}(\theta)}{r_0(\theta)}, \qquad i=1,2,3,4,
\end{equation}
where $r_0(\theta)$ and $r_{i,4}(\theta)=r_{i,4}(\theta)$, $i=1,2,3,4$, are defined in 
\eqref{oneminusbhat} and \eqref{rindices}, and let
\begin{equation}
s_{i,4} = \frac{1}{|\hat{\T}|} \sum_{\theta \in \hat{\T}} s_{i,4}(\theta), \qquad i=1,2,3,4.
\end{equation}. 

\begin{Thm}
\label{MainTheorem}
For every $\theta \in \hat{\T}$,
\begin{equation}
\label{fourierPsialt}
\hat{\Psi}(\theta) = \sum_{x \in \T} \Psi_{0,x}\,e^{2\pi i(x \cdot \theta)}
=  \frac{1-\Psi_{0,0}^{(4)}}{N}\, 
\begin{pmatrix}  
s_{1,4}(\theta)\\
s_{2,4}(\theta) \\
s_{3,4}(\theta) \\
s_{4,4}(\theta)
\end{pmatrix}
\end{equation}
with
\begin{equation}
\label{Psifouriden}
\Psi_{0,0}^{(4)} = \frac{ s_{4,4} }{ 1 - s_{4,4} }.
\end{equation}
\end{Thm}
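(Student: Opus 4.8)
The plan is to read off the two assertions of the theorem directly from the closed expressions already in hand: the Fourier identity \eqref{fourierPsi} together with its explicit evaluation \eqref{onembhatphihat} for the first formula \eqref{fourierPsialt}, and the self-consistency relation \eqref{psi00} for the scalar identity \eqref{Psifouriden}. The conceptual engine is that everything collapses onto a single unknown scalar, $\Psi_{0,0}^{(4)}$, precisely because the forcing vectors $\hat{\Gamma}$, $\hat{\Phi}$ and the matrix $\hat{A}$ are all supported on the fourth coordinate.

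First I would establish \eqref{fourierPsialt}. Substituting \eqref{onembhatphihat} into \eqref{fourierPsi}, the Kronecker factor $\delta_{\theta,-\eta}$ forces $\hat{\Psi}(\theta,\eta)$ to vanish unless $\eta=-\theta$; this both re-confirms that $\Psi_{x,y}$ depends on $x-y$ only and justifies writing $\hat{\Psi}(\theta)=\hat{\Psi}(\theta,-\theta)$. On this diagonal, symmetry of the migration kernel gives $\hat{p}(\eta)=\hat{p}(-\theta)=\hat{p}(\theta)$, so the product $\hat{p}(\theta)\hat{p}(\eta)$ in \eqref{onembhatphihat} becomes $\hat{p}(\theta)^2$. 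The resulting fourth-column vector $\tfrac{(1-\mu)^2(1-\e)^2}{N}\,\hat{p}(\theta)^2\,\big(r_{i,4}(\theta)\big)_{i=1}^4 / r_0(\theta)$ matches, entry by entry, the definition of $K$ and $s_{i,4}(\theta)$ in \eqref{si4def}, and is therefore $(s_{1,4}(\theta),\dots,s_{4,4}(\theta))^{\top}$. Carrying along the prefactor $\tfrac{1-\Psi_{0,0}^{(4)}}{N}$ then yields \eqref{fourierPsialt} verbatim.

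Next I would pin down the scalar $\Psi_{0,0}^{(4)}$ from \eqref{psi00}. By \eqref{Phimatrix} and \eqref{Amatrix}, the vector $\hat{\Phi}(\theta)$ has only its fourth entry nonzero and $\hat{A}(\theta)$ has only its $(4,4)$ entry nonzero, so in the products $(\mathds{1}-\hat{B}(\theta))^{-1}\hat{\Phi}(\theta)$ and $(\mathds{1}-\hat{B}(\theta))^{-1}\hat{A}(\theta)$ only the fourth column of the inverse, i.e.\ the $r_{i,4}$, survives. After averaging over $\theta$ and recognising the constants $s_{i,4}$, the matrix $\mathds{1}-\tfrac{1}{|\hat{\T}|}\sum_{\theta}(\mathds{1}-\hat{B}(\theta))^{-1}\hat{A}(\theta)$ is triangular: its only off-diagonal entries sit in the fourth column, and its fourth diagonal entry equals $1-s_{4,4}$. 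Consequently the fourth component of \eqref{psi00} decouples into the single scalar equation $(1-s_{4,4})\,\Psi_{0,0}^{(4)}=s_{4,4}$, whose solution is exactly \eqref{Psifouriden}.

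The genuinely laborious ingredient is the symbolic inversion producing the seventeen polynomials $r_0,r_{i,j}$ in \eqref{oneminusbinverse}--\eqref{rindices}; this is already carried out, so what remains is largely bookkeeping. The one point demanding real care, and which I expect to be the main obstacle rather than any conceptual difficulty, is the normalisation: keeping straight the factors $1/N$ and $1/|\hat{\T}|$, the effect of $\delta_{\theta,-\eta}$, and the sign attached to $\hat{A}$ through \eqref{Amatrix}, so that the triangular fourth-row equation closes to precisely the fraction $s_{4,4}/(1-s_{4,4})$ and not a spurious variant.
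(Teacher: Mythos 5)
Your proposal is correct and is essentially the paper's own proof: the paper likewise reads \eqref{fourierPsialt} off from \eqref{fourierPsi} together with \eqref{onembhatphihat}/\eqref{oneminusbinverse}, and derives \eqref{Psifouriden} from \eqref{psi00} by using that $\hat{\Phi}(\theta)$ is supported on the fourth entry and $\hat{A}(\theta)$ on the $(4,4)$ entry, so that only the rank-one matrix with fourth column $(s_{1,4},s_{2,4},s_{3,4},s_{4,4})^{\top}$ appears and its inverse is explicit. The sign you single out as the main hazard is handled in your proposal exactly as in the paper's proof (the matrix in \eqref{psi00} is written as $\mathds{1}$ minus the rank-one matrix, giving the diagonal entry $1-s_{4,4}$); note, though, that carrying the minus sign of \eqref{Amatrix} through \eqref{psi00} literally would instead make the decoupled fourth equation $(1+s_{4,4})\Psi^{(4)}_{0,0}=s_{4,4}$, so this step of both arguments rests on the paper's stated form of \eqref{psi00} rather than on an independent re-derivation of the sign.
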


\begin{proof}
The vector $\hat{\Phi}(\theta)$ and the matrix $\hat{A}(\theta)$ are simple, namely, 
\eqref{Phimatrix} and \eqref{Amatrix} give
\begin{equation}
\hat{\Phi}(\theta)
= K\,\hat{p}(\theta)^2
\begin{pmatrix} 0 \\ 0 \\ 0 \\ 1
\end{pmatrix}, 
\quad
\hat{A}(\theta) = - K\,\hat{p}(\theta)^2
\begin{pmatrix}
0 & 0 & 0 & 0 \\
0 & 0 & 0 & 0 \\
0 & 0 & 0 & 0 \\
0 & 0 & 0 & 1
\end{pmatrix}.
\end{equation}
Hence \eqref{oneminusbinverse} gives \eqref{fourierPsialt} with
\begin{equation}
\Psi_{0,0} = 
\left( \mathds{1} - 
\begin{pmatrix}
0 & 0 & 0 & s_{1,4} \\
0 & 0 & 0 & s_{2,4} \\
0 & 0 & 0 & s_{3,4} \\
0 & 0 & 0 & s_{4,4}
\end{pmatrix}
\right)^{-1}
\begin{pmatrix}  
s_{1,4} \\
s_{2,4} \\
s_{3,4} \\
s_{4,4}
\end{pmatrix}
= \left(\mathds{1} + \frac{1}{1-s_{4,4}}
\begin{pmatrix}
0 & 0 & 0 & s_{1,4} \\
0 & 0 & 0 & s_{2,4} \\
0 & 0 & 0 & s_{3,4} \\
0 & 0 & 0 & s_{4,4}
\end{pmatrix}
\right)
\begin{pmatrix}  
s_{1,4} \\
s_{2,4} \\
s_{3,4} \\
s_{4,4}
\end{pmatrix}.
\end{equation}
The latter in turn yields \eqref{Psifouriden}.
\end{proof}

The formula in \eqref{fourierPsialt} is difficult to analyse. The $\theta$-dependence sits in 
the quotients of $r_{i,4}(\theta)$, $i=1,2,3,4$, and $r_0(\theta)$. These are second-degree 
and fourth-degree polynomials in $\hat{p}(\theta)$, respectively, with coefficients that depend 
on the parameters $\delta$, $\epsilon$ and $\mu$. In order to find $\Psi_{0,x}$ we need to Fourier 
invert the $4$-vector in the right-hand side of \eqref{fourierPsialt}.

\begin{Remark}
{\rm The result in Theorem~\ref{MainTheorem} is \emph{general}. Our colonies form a discrete torus 
$\T$, but we could arrange them on any regular lattice, since only translation invariance and 
periodicity are needed. The formulas are also valid for a generic random walk transition kernel 
$p(x,y)$,\ $x,y\in\T$, beyond the special case considered in \eqref{transkernel}--\eqref{qkernel}. 
Moreover, if we would have had $K\in\N$ seed-banks instead of one, then we would have to work 
with $(K+1)^2 \times (K+1)^2$ matrices rather than $4 \times 4$ matrices, but the structure would 
be the same.}
\end{Remark}

%%%%%%%%% SECTION 4 %%%%%%%%%%%%%%%%%%%%%

\section{Special choice of parameters}
\label{spec-choice-par}

In Section~\ref{sws} we look at the special case $M=N$, for which $\de = \e \frac{M}{N} 
= \e$, and that $0 < \de \ll 1$. We refer to this case as the \emph{symmetric slow seed-bank}
( see Fig.~\ref{fig-sssb}). This choice will allow us to simplify the polynomials $r_{i,4}(\theta)$, 
$i=1,2,3,4$, and $r_0(\theta)$ appearing in \eqref{si4def}, and deduce from 
Theorem~\ref{MainTheorem} a more manageable formula for $\Psi_{0,x}$, $x\in\T$, stated 
in Theorems~\ref{thm:smalld} and \ref{thm:smalldalt}, in terms of the Green function associated 
with the random walk $q$ in \eqref{transkernel}. In Section~\ref{gf} we recall what is known 
for this Green function, both on the infinite torus and the finite torus. In Section~\ref{wmut} 
we use this information to obtain explicit scaling expressions when $0 < \mu/\nu \ll 1$ 
(= slower mutation than migration). In Sections~\ref{reg}--\ref{regalt} we use these expressions 
to discuss various regimes for $\Psi_{0,x}$, $x\in\T$, as a function of $N$, $L$ and $\mu/\nu$, 
stated in Theorems~\ref{d0exp1}--\ref{d>0exp2}.

%%%%%%%%%%%%%%%%%%% SQUARE LATTICE %%%%%%%%%%%%%%%%%%
\begin{figure}[htbp]
\vspace{-0.5cm}
\begin{center}
\setlength{\unitlength}{0.7cm}
\begin{picture}(10,5)(-3,-1)
\put(-4,0){\line(1,0){10}}
\put(1,-0.25){\line(0,1){0.5}}
\put(-2,0.3){$N$}
\put(3.5,0.3){$N$}
\put(0,1){\vector(1,0){2}}
\put(2,2){\vector(-1,0){2}}
\put(0.9,1.2){$\delta$}
\put(0.9,2.2){$\delta$}
\put(-2.3,-.5){\tiny\mbox{active}}
\put(3,-.5){\tiny\mbox{dormant}}
\end{picture}
\end{center}
\vspace{-0.5cm}
\caption{\small Symmetric slow seed-bank: equal size $N$ for the active and the dormant 
population, with equal and small crossover rate $0<\delta \ll 1$ in both directions.}
\label{fig-sssb}
\end{figure}
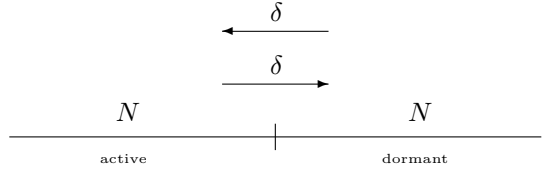
%%%%%%%%%%%%%% END FIGURE %%%%%%%%%%%%%%%%%%%%%%%%

%%%

\subsection{Symmetric slow seed-bank}
\label{sws}

Let
\begin{equation}
\label{abcF}
\hat{\alpha}(\theta) = \frac{m\hat{p}(\theta)^2}{1-m\hat{p}(\theta)^2}, \quad
\hat{\beta}(\theta) = \frac{m^2\hat{p}(\theta)^3}{(1-m\hat{p}(\theta))(1-m\hat{p}(\theta)^2)}, \quad
\hat{\gamma}(\theta) = \frac{m\hat{p}(\theta)^2}{(1-m\hat{p}(\theta)^2)^2},
\end{equation}
and
\begin{equation}
\label{abcFinv}
\alpha(x) = \frac{1}{|\hat{\T}|} \sum_{\theta\in\hat{\T}} \hat{\alpha}(\theta)\,e^{-2\pi i(\theta\cdot x)}, \quad
\beta(x) = \frac{1}{|\hat{\T}|} \sum_{\theta\in\hat{\T}} \hat{\beta}(\theta)\,e^{-2\pi i(\theta\cdot x)}, \quad
\gamma(x) = \frac{1}{|\hat{\T}|} \sum_{\theta\in\hat{\T}} \hat{\gamma}(\theta)\,e^{-2\pi i(\theta\cdot x)}.
\end{equation}

\begin{Thm}
\label{thm:smalld}
For every $x\in\T$, $M=N$, $\e=\de$, $\mu\in [0,1)$ and $\nu\in [0,1]$, as $\delta \downarrow 0$,
\begin{equation}
\label{Psizerox}
\Psi_{0,x} = c_N \left[ 
\begin{pmatrix}
 0 \\ 0 \\ 0 \\ \alpha(x) 
\end{pmatrix} + \de 
\begin{pmatrix} 
0 \\  \beta(x)\\ \beta(x) \\ 2 c_N \alpha(x)\gamma(0) - 2 \gamma(x) 
\end{pmatrix} \right] + O(\de^2),
\end{equation}
where
\begin{equation}
c_N = \frac{1}{N+\alpha(0)}.
\end{equation}
In particular,
\begin{equation}
\label{prefactor}
\frac{1-\Psi^{(4)}_{0,0}}{N} 
= c_N\left[1+ \delta\,\frac{2\alpha(0)\gamma(0)}{N} + O(\delta^2)\right].
\end{equation}
\end{Thm}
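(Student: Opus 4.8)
The plan is to start from the closed Fourier formula of Theorem~\ref{MainTheorem} and Taylor-expand it to first order in $\de$ after imposing $M=N$ and $\e=\de$. Writing $m=(1-\mu)^2$ and $\hat{p}=\hat{p}(\theta)$, and using that the symmetry of the walk gives $\hat{p}(-\theta)=\hat{p}(\theta)$ so that the constraint $\eta=-\theta$ collapses $\hat{p}(\eta)$ onto $\hat{p}(\theta)$, the entire $\theta$-dependence of $\Psi_{0,x}$ sits in the rational functions $s_{i,4}(\theta)=K\,\hat{p}^2\,r_{i,4}(\theta)/r_0(\theta)$ of \eqref{si4def}. The decisive algebraic input is the behaviour of numerator and denominator at $\de=0$: a direct substitution into \eqref{oneminusbhat} and \eqref{rindices} should give the factorizations
\begin{equation}
r_0(\theta)\big|_{\de=0}=(1-m)(1-m\hat{p})^2(1-m\hat{p}^2),\qquad
r_{4,4}(\theta)\big|_{\de=0}=(1-m)(1-m\hat{p})^2,
\end{equation}
while $r_{1,4}=O(\de^2)$ (it carries a factor $\e^2$) and $r_{2,4}=r_{3,4}=O(\de)$ (they carry a factor $\e$). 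Dividing, the common factor $(1-m)(1-m\hat{p})^2$ cancels and leaves $s_{4,4}(\theta)=\hat{\alpha}(\theta)+O(\de)$, with $\hat{\alpha}$ as in \eqref{abcF}.

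Next I would expand each $s_{i,4}(\theta)$ to first order. For the off-diagonal entries, since $r_{2,4}=r_{3,4}$ vanish at $\de=0$, only the derivative $\partial_\de r_{2,4}|_{\de=0}$ evaluated against $r_0|_{\de=0}$ is needed; after the cancellation this should give $s_{2,4}(\theta)=s_{3,4}(\theta)=\de\,\hat{\beta}(\theta)+O(\de^2)$ with $\hat{\beta}$ as in \eqref{abcF}, and $s_{1,4}(\theta)=O(\de^2)$. For the $(4,4)$-entry the expansion is more involved: one must differentiate the quotient $r_{4,4}/r_0$ as well as the $\e$-dependent factor $K$, and I expect the outcome to be a linear combination of $\hat{\alpha}(\theta)$ and $\hat{\gamma}(\theta)$, the appearance of $(1-m\hat{p}^2)^{-2}$ in $\hat{\gamma}$ being the signature of differentiating the $(1-m\hat{p}^2)^{-1}$ sitting inside $s_{4,4}|_{\de=0}$.

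Then I would pin down the scalar $\Psi^{(4)}_{0,0}$ through the self-consistency relation \eqref{Psifouriden}. Averaging $s_{4,4}(\theta)$ over $\hat{\T}$ gives $s_{4,4}=\alpha(0)+O(\de)$ with $\alpha(0)$ as in \eqref{abcFinv}; solving \eqref{Psifouriden} should then yield $\Psi^{(4)}_{0,0}=\tfrac{\alpha(0)}{N+\alpha(0)}+O(\de)$, hence the prefactor $\tfrac{1-\Psi^{(4)}_{0,0}}{N}=c_N+O(\de)$ with $c_N=\tfrac{1}{N+\alpha(0)}$, and carrying the $O(\de)$ part of $s_{4,4}$ through the same relation produces the correction $\de\,\tfrac{2\alpha(0)\gamma(0)}{N}$ of \eqref{prefactor}. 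Finally I would insert the prefactor expansion and the $s_{i,4}(\theta)$ expansions into \eqref{fourierPsialt} and apply the inversion \eqref{abcFinv} termwise, reading off $\alpha(x)$, $\beta(x)$, $\gamma(x)$ to assemble \eqref{Psizerox}.

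The hard part will be the $(4,4)$-entry at first order: it requires differentiating both the quartic $r_0$ and the quadratic $r_{4,4}$ in $\de$ at $\de=0$, verifying that the $(1-m)(1-m\hat{p})^2$ factor cancels to first order and not merely at $\de=0$, and then combining the resulting $\hat{\alpha}$-proportional piece with the $\de$-correction of the prefactor so that the inversion assembles exactly into $2c_N\alpha(x)\gamma(0)-2\gamma(x)$. Keeping the two sources of the $\alpha(x)\gamma(0)$ contribution straight --- the prefactor correction on the one hand and the $\hat{\alpha}$-part of $\partial_\de s_{4,4}$ on the other --- and ensuring the bookkeeping of the $\de_{\theta,-\eta}$ constraint and of the factor $K$ stays consistent, is where the main risk of error lies.
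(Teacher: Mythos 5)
Your proposal follows exactly the paper's (one-line) proof: Taylor-expand \eqref{fourierPsialt} and \eqref{Psifouriden} in $\de$ under $M=N$, $\e=\de$, then invert the Fourier transform via \eqref{abcFinv}; moreover your key algebraic inputs check out, namely $r_0|_{\de=0}=(1-m)(1-m\hat p)^2(1-m\hat p^2)$, $r_{4,4}|_{\de=0}=(1-m)(1-m\hat p)^2$, $r_{1,4}=O(\de^2)$ and $r_{2,4}=r_{3,4}=O(\de)$, so the cancellation producing $\hat\alpha(\theta)$ is genuine. The one caution is the $1/N$ and sign bookkeeping you yourself flag: with the paper's literal definitions the cancellation leaves $\hat\alpha(\theta)/N$ (the factor $K$ carries a $1/N$ that also sits in the prefactor of \eqref{fourierPsialt}, an inconsistency traceable to \eqref{gammahattheta} versus \eqref{Gamma}), so the self-consistency step must be read as $\Psi^{(4)}_{0,0}=\frac{1-\Psi^{(4)}_{0,0}}{N}\,\alpha(0)+O(\de)$, which yields $\frac{\alpha(0)}{N+\alpha(0)}$, rather than as a literal application of $s_{4,4}/(1-s_{4,4})$ with your normalization of $s_{4,4}$ --- this is an infelicity inherited from the paper, not a gap in your argument.
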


\begin{proof}
Taylor expand in $\de$ the expressions in \eqref{fourierPsialt} and \eqref{Psifouriden}, and 
take the inverse Fourier transform.
\end{proof}

\begin{Remark}
{\rm The first vector in the right-hand side \eqref{Psizerox} must have all entries in $[0,1]$ 
because $\Psi_{0,x}$ is a probability. However, the second vector may in principle take 
values in $\R$ because the seed-bank has a tendency to increase \emph{and} decrease 
$\Psi_{0,x}$. Indeed, while in the seed-bank, individuals are dormant and continue to 
mutate, yet at the same time are more easily traced by other individuals. Thus, there
are two competing effects, exemplified by the different signs in the fourth entry of the 
second vector.}
\end{Remark}

We next compute $\alpha(x)$, $\beta(x)$, $\gamma(x)$ for the uniform nearest-neighbour
model in \eqref{qkernel}. For $x \in \T$ and $l\in\N_0$, let $q_l(x)$ be the probability 
that simple random walk starting from the origin is at site $x$ at time $l$. The Green 
function of simple random walk at site $x$ is
\begin{equation}
G_x(z) = \sum_{l\in\N_0} q_l(x)z^l, \qquad |z| \leq 1.
\end{equation}

\begin{Thm}
\label{thm:smalldalt}
For $q$ as in \eqref{qkernel},
\begin{equation}
\label{abccomp}
\begin{aligned}
\alpha(x) &= \frac{1}{2(1-b)}\,G_x\left(\frac{a}{1-b}\right) 
+ \frac{1}{2(1+b)}\,G_x\left(-\frac{a}{1+b}\right) -\delta_{0,x},\\[0.2cm]
\beta(x) &= \frac{1-\mu}{2\mu}\,\frac{1}{1-b}\,G_x\left(\frac{a}{1-b}\right)
- \frac{1-\mu}{2(2-\mu)}\,\frac{1}{1+b}\,G_x\left(-\frac{a}{1+b}\right)\\
&\qquad -\frac{1}{1-(1-\mu)^2}\,\frac{1}{1-(1-\mu)b}\,
G_x\left(\frac{(1-\mu)a}{1-(1-\mu)b}\right) + \delta_{0,x},\\
\gamma(x) &= \frac{b}{4(1-b)^2}\,G_x\left(\frac{a}{1-b}\right)
+ \frac{a}{4(1-b)^3}\,G'_x\left(\frac{a}{1-b}\right)\\
&\qquad - \frac{b}{4(1+b)^2}\,G_x\left(-\frac{a}{1+b}\right)
- \frac{a}{4(1+b)^3}\,G'_x\left(-\frac{a}{1+b}\right)
\end{aligned}
\end{equation}
with $a=(1-\mu)\nu$, $b=(1-\mu)(1-\nu)$ and $G'_x$ the derivative of $G_x$.
\end{Thm}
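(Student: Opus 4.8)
The plan is to rewrite each of $\hat\alpha,\hat\beta,\hat\gamma$ as a rational function of the single variable $X(\theta):=(1-\mu)\,\hat p(\theta)$, decompose it into partial fractions, and invert each elementary fraction by means of the Fourier representation of the random walk Green function. The starting point is the identity
\begin{equation}
G_x(z)=\frac{1}{|\hat\T|}\sum_{\theta\in\hat\T}\frac{1}{1-z\hat q(\theta)}\,e^{-2\pi i(\theta\cdot x)},\qquad
\hat q(\theta)=\sum_{z\in\T}q(0,z)\,e^{2\pi i(\theta\cdot z)},
\end{equation}
valid for $|z|<1$, which follows by inserting the spectral representation $q_l(x)=|\hat\T|^{-1}\sum_\theta\hat q(\theta)^l e^{-2\pi i(\theta\cdot x)}$ into $G_x(z)=\sum_{l\in\N_0}q_l(x)z^l$ and summing the geometric series. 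Differentiating in $z$ yields the companion representation for $G'_x$, and the algebraic identity $\tfrac{1}{(1-zw)^2}=\tfrac{1}{1-zw}+\tfrac{zw}{(1-zw)^2}$ shows that $\tfrac{1}{(1-z\hat q)^2}$ Fourier-inverts to $G_x(z)+z\,G'_x(z)$; this is what will create the $G'_x$ terms in $\gamma$.

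The decisive simplification is that for $p(x,y)=(1-\nu)\delta_{x,y}+\nu q(x,y)$ one has $\hat p(\theta)=(1-\nu)+\nu\hat q(\theta)$, so that $X(\theta)=(1-\mu)\hat p(\theta)=b+a\,\hat q(\theta)$ with $a=(1-\mu)\nu$ and $b=(1-\mu)(1-\nu)$ exactly as in the statement. Since $m=(1-\mu)^2$ gives $m\hat p^2=X^2$, $m\hat p=(1-\mu)X$ and $m^2\hat p^3=(1-\mu)X^3$, the definitions in \eqref{abcF} become
\begin{equation}
\hat\alpha=\frac{X^2}{1-X^2},\qquad
\hat\beta=\frac{(1-\mu)X^3}{(1-(1-\mu)X)(1-X^2)},\qquad
\hat\gamma=\frac{X^2}{(1-X^2)^2}.
\end{equation}
Each pole in $X$ is now linear or quadratic, and the affine substitution $X=b+a\hat q$ turns any factor $1-cX$ into $(1-cb)\bigl(1-\tfrac{ca}{1-cb}\,\hat q\bigr)$, which is precisely the form inverted by $G_x$.

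I would then carry out the partial fraction decompositions in $X$: for $\hat\alpha$ one finds $-1+\tfrac12(1-X)^{-1}+\tfrac12(1+X)^{-1}$; for $\hat\gamma$ the even double-pole expansion $-\tfrac14(1-X)^{-1}+\tfrac14(1-X)^{-2}-\tfrac14(1+X)^{-1}+\tfrac14(1+X)^{-2}$; and for $\hat\beta$, after extracting the polynomial part $1$ by division, three simple fractions with poles at $X=1,-1,\tfrac{1}{1-\mu}$ and residues $\tfrac{1-\mu}{2\mu}$, $-\tfrac{1-\mu}{2(2-\mu)}$, $-\tfrac{1}{1-(1-\mu)^2}$ (using $\mu(2-\mu)=1-(1-\mu)^2$). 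Inverting term by term, a constant $c$ in $X$ contributes $c\,\delta_{0,x}$, a factor $(1-cX)^{-1}$ contributes $\tfrac{1}{1-cb}\,G_x\bigl(\tfrac{ca}{1-cb}\bigr)$, and a factor $(1-cX)^{-2}$ contributes $\tfrac{1}{(1-cb)^2}\bigl[G_x(\zeta)+\zeta\,G'_x(\zeta)\bigr]$ with $\zeta=\tfrac{ca}{1-cb}$ (here $c=1$ and $c=-1$ for the factors $1\mp X$). Collecting the coefficients of $G_x$ and $G'_x$ reproduces \eqref{abccomp}; for $\gamma$ in particular the single- and double-pole contributions at $c=1$ combine into $\tfrac{b}{4(1-b)^2}G_x+\tfrac{a}{4(1-b)^3}G'_x$, and analogously at $c=-1$.

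The computations are routine but bookkeeping-heavy, and the two points that deserve care are the residue combination for $\hat\gamma$ (where the single- and double-pole terms at each of $X=\pm1$ must be merged) and the verification that the arguments $\tfrac{a}{1-b}$, $-\tfrac{a}{1+b}$ and $\tfrac{(1-\mu)a}{1-(1-\mu)b}$ lie in the disk of convergence of $G_x$. Using $a+b=1-\mu$ one has $\tfrac{a}{1-b}=\tfrac{a}{\mu+a}$ and $\tfrac{(1-\mu)a}{1-(1-\mu)b}<1\iff(1-\mu)^2<1$, so both are $<1$ exactly when $\mu>0$, while $\bigl|\tfrac{a}{1+b}\bigr|<1$ holds unconditionally; since $\theta=0$ is the only frequency with $\hat q(\theta)=1$ on the finite torus, positivity of $\mu$ is precisely what makes the defining series converge. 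This is where I expect the only genuine subtlety to lie.
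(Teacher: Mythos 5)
Your proof is correct — I checked the partial-fraction coefficients ($-1+\tfrac12(1-X)^{-1}+\tfrac12(1+X)^{-1}$ for $\hat\alpha$; residues $\tfrac{1-\mu}{2\mu}$, $-\tfrac{1-\mu}{2(2-\mu)}$, $-\tfrac{1}{1-(1-\mu)^2}$ plus polynomial part $1$ for $\hat\beta$; the even double-pole expansion for $\hat\gamma$) and the recombination into the stated formulas, and all of it matches — but it takes a genuinely different route from the paper. The paper treats the three functions by three separate devices: for $\alpha$ it expands $(1-m\hat p^2)^{-1}$ as a geometric series, binomially expands $\hat p^{2k}=((1-\nu)+\nu\hat q)^{2k}$, and resums with the parity trick $\tfrac12[1+(-1)^{k'}]$ (which is your split of $(1-X^2)^{-1}$ into $\tfrac12(1-X)^{-1}+\tfrac12(1+X)^{-1}$, carried out at the level of series coefficients rather than upfront); for $\beta$ it factors $\hat\beta=\hat\alpha\hat\delta$ with $\hat\delta=m\hat p/(1-m\hat p)$, computes $\delta(x)$ by the same expansion, and evaluates the convolution $\beta=\alpha\ast\delta$ using the identity $\sum_{y}G_{x-y}(z)G_y(z')=\tfrac{1}{z-z'}[zG_x(z)-z'G_x(z')]$ from \eqref{gfid}; for $\gamma$ it observes $\hat\gamma=m\,\partial_m\hat\alpha$ and differentiates the closed form of $\alpha(x)$ in $m$, which is where the $G'_x$ terms come from in the paper's version. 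Your scheme — reduce each of \eqref{abcF} to a rational function of the single variable $X=b+a\hat q$, partial-fraction it, and invert the elementary pieces $(1-cX)^{-1}$, $(1-cX)^{-2}$ through the spectral representation of $G_x$ — is more uniform: one mechanism handles all three functions, no convolution identity and no $m$-differentiation are needed, and it makes transparent why exactly the arguments $\tfrac{a}{1-b}$, $-\tfrac{a}{1+b}$, $\tfrac{(1-\mu)a}{1-(1-\mu)b}$ occur (they are the images of the poles $X=1,-1,\tfrac{1}{1-\mu}$ under the affine substitution). What the paper's route buys in exchange is that it works directly with the path expansion $q_l(x)$ (never invoking the spectral representation), produces the reusable Green-function identities \eqref{gfid}, and gives a conceptual origin for the derivative terms in $\gamma$. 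Your closing convergence check — that all three arguments lie in $(-1,1)$ precisely because $\mu>0$, which is what legitimises the rearrangements on the finite torus where $\hat q(0)=1$ — is a point the paper's proof passes over in silence, and it is a genuine (if small) addition.
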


\begin{proof}
From \eqref{transkernel} we have $\hat{p}(\theta) = (1-\nu) + \nu \hat{q}(\theta)$. 
Substituting this into the first sum in \eqref{abcFinv}, we get
\begin{equation}
\label{comp1}
\begin{aligned}
\delta_{0,x}+\alpha(x)
&= \sum_{k\in\N_0} m^k \frac{1}{|\hat{\T}|} \sum_{\theta\in\hat{\T}} 
\hat{p}(\theta)^{2k}\,e^{-2\pi i(\theta\cdot x)}\\
&= \sum_{k\in\N_0} m^k \sum_{l=0}^{2k} \binom{2k}{l} (1-\nu)^{2k-l} \nu^l 
\frac{1}{|\hat{\T}|} \sum_{\theta\in\hat{\T}} \hat{q}(\theta)^{l}\,e^{-2\pi i(\theta\cdot x)}\\
&= \sum_{k\in\N_0} m^k \sum_{l=0}^{2k} \binom{2k}{l} (1-\nu)^{2k-l} \nu^l q_l(x)\\
&= \sum_{l\in\N_0} q_l(x)\,a^l
\sum_{k= \lceil l/2 \rceil}^\infty \binom{2k}{l} b^{2k-l}\\
&= \sum_{l\in\N_0} q_l(x)\,a^l 
\sum_{k'=l}^\infty \binom{k'}{l} b^{k'-l} \tfrac12[(+1)^{k'}+(-1)^{k'}]\\
&= \sum_{l\in\N_0} q_l(x)\,a^l\,\tfrac12\left[(+1)^l (1-b)^{-l-1} +(-1)^l (1+b)^{-l-1}\right]\\
&= \frac{1}{2(1-b)}\,G_x\left(\frac{a}{1-b}\right) + \frac{1}{2(1+b)}\,G_x\left(-\frac{a}{1+b}\right), 
\end{aligned}
\end{equation}
which gives the formula for $\alpha(x)$. Next, define 
\begin{equation}
\hat\delta(\theta) = \frac{m\hat{p}(\theta)}{1-m\hat{p}(\theta)}
\end{equation}
and write the second term in \eqref{abcF} as $\hat\beta(\theta)=\hat\alpha(\theta)
\hat\delta(\theta)$. Then the second sum in \eqref{abcFinv} becomes
\begin{equation}
\label{conv}
\beta(x) = \sum_{y \in \T} \alpha(x-y)\delta(y).
\end{equation}
A computation similar to \eqref{comp1} yields
\begin{equation}
\label{comp2}
\begin{aligned}
\delta_{0,x}+\delta(x) 
&= \sum_{k\in\N_0} m^k \frac{1}{|\hat{\T}|} \sum_{\theta\in\hat{\T}} 
\hat{p}(\theta)^k\,e^{-2\pi i(\theta\cdot x)}\\
&= \sum_{k\in\N_0} m^k \sum_{l=0}^k \binom{k}{l} (1-\nu)^{k-l} \nu^l 
\frac{1}{|\hat{\T}|} \sum_{\theta\in\hat{\T}} \hat{q}(\theta)^{l}\,e^{-2\pi i(\theta\cdot x)}\\
&= \sum_{k\in\N_0} m^k \sum_{l=0}^k \binom{k}{l} (1-\nu)^{k-l} \nu^l q_l(x)\\
&= \sum_{l\in\N_0} q_l(x)\,a'^l
\sum_{k= l}^\infty \binom{k}{l} b'^{k-l}\\
&= \sum_{l\in\N_0} q_l(x)\,a'^l\,(1-b')^{-l-1}\\
&= \frac{1}{1-b'}\,G_x\left(\frac{a'}{1-b'}\right)
\end{aligned}
\end{equation}
with $a'=(1-\mu)^2\nu=(1-\mu)a$ and $b'=(1-\mu)^2(1-\nu)=(1-\mu)b$. Moreover, for any 
$|z|,|z'| \leq 1$ with $z \neq z'$ we have the identities
\begin{equation}
\label{gfid}
\begin{aligned}
&\sum_{x\in\T} G_x(z) = \sum_{l\in\N_0} \sum_{x\in\T}z^l q_l(x) 
= \sum_{l\in\N_0} z^l = (1-z)^{-1},\\[0.2cm]
&\sum_{y \in \T} G_{x-y}(z)G_y(z') 
= \sum_{l,l'\in\N_0} z^l z'^{l'} \sum_{y \in \T} q_l(x-y)q_{l'}(y)
= \sum_{l,l'\in\N_0} z^l z'^{l'} q_{l+l'}(x)\\
&= \sum_{k\in\N_0} q_k(x) \sum_{ {l,l'\in\N_0} \atop {l+l'=k} } z^l z'^{l'}
= \sum_{k\in\N_0} q_k(x) z^k\,\frac{1-(\frac{z'}{z})^{k+1}}{1-(\frac{z'}{z})}
= \frac{1}{z-z'} \big[zG_x(z) - z'G_x(z')\big].
\end{aligned}
\end{equation}
Inserting \eqref{comp1} and \eqref{comp2} into \eqref{conv} and using \eqref{gfid}, we 
get the formula for $\beta(x)$ after a short computation. Finally, note from \eqref{abcF} 
that $\hat\gamma(\theta) = m \frac{\partial\hat\alpha(\theta)}{\partial m}$, $\theta\in\hat\T$. 
After substitution into the third sum in \eqref{abcFinv} this gives $\gamma(x) = m 
\frac{\partial\alpha(x)}{\partial m} = \tfrac12(1-\mu)\frac{\partial\alpha(x)}{\partial (1-\mu)}$, 
$x\in\T$. Inserting the formula for $\alpha(x)$, we get the formula for $\gamma(x)$ after 
a short computation.
\end{proof}

By combining Theorems~\ref{thm:smalld} and \ref{thm:smalldalt}, we obtain a formula
for $\Psi_{0,x}$ (up to leading order in $\delta$ for $\delta\downarrow 0$) in terms of the 
Green function of simple random walk. The latter has been studied extensively in the 
literature. We recall the relevant formulas.

%%%

\subsection{Green functions}
\label{gf}

The following properties are collected from Montroll~\cite{M64}, \cite{M69}, Montroll and 
Weiss~\cite{MW65}, Spitzer~\cite[Sections I.1, III.15]{S76}, den Hollander and 
Kasteleyn~\cite{dHK82}, Hughes~\cite[Section 3.3, Appendix A.1]{H95}, Abramowitz
and Stegun~\cite[Items 9.6.12, 9.6.13,9.7.2]{AS72}.

\paragraph{$\bullet$ Infinite torus.}
For $L=\infty$, we have $\T=\Z^d$. For $d=1$, the Green function is known in 
closed form: 
\begin{equation}
\label{G1}
G_x(z) = y(z)^{|x|}\,(1-z^2)^{-1/2}, \qquad x \in \Z,\, 0<|z|<1,
\end{equation}
where $y(z) = [1-(1-z^2)^{1/2}]/z$. For $d \geq 2$ no closed form expression is available, 
but there are asymptotic formulas for $z \uparrow 1$. For $d=2$, 
\begin{equation}
\label{G2}
G_x(z) = \frac{1}{\pi} \left[\log\left(\frac{1}{1-z}\right) - C(x) 
+ \bar{C}(x)(1-z)\log\left(\frac{1}{1-z}\right) 
+ O(1-z)\right], \qquad x \in \Z^2,
\end{equation}
where 
\begin{equation}
C(0)=0, \qquad \bar{C}(0)=-\tfrac12,
\end{equation}
and
\begin{equation}
\label{xscal}
\begin{aligned}
&C(x) = \log \|x\|^2 + (2\gamma + \log 8) + O\left(\frac{1}{\|x\|^2}\right),
\qquad \bar{C}(x) = \|x\|^2 + O(1),\\
&\|x\|\to\infty,  \quad \|x\| = \sqrt{x_1^2+x_2^2},
\end{aligned}
\end{equation} 
with $\gamma=0.57721\ldots$ Euler's constant. In particular, $C(k,k) = 4\sum_{l=1}^{|k|} 
\frac{1}{2l-1}$, $k \in \Z$. For $d=3$,
\begin{equation}
\label{G3}
G_x(z) = C(x) - \bar{C}(x) (1-z)^{1/2} + O(1-z), \qquad x \in \Z^3,
\end{equation}
where
\begin{equation}
C(0)= 1.51638\ldots, \qquad \bar{C}(0) = \frac{3\sqrt{3}}{\pi \sqrt{2}},
\qquad \frac{C(x)}{C(0)} = \frac{\bar{C}(x)}{\bar{C}(0)}, \quad x \in \Z^3,
\end{equation}
and
\begin{equation}
\label{xscalalt}
C(x) = \frac{3}{2\pi \|x\|} \left[1+\frac{1}{8\|x\|^2}
\left(-3+5\,\frac{x_1^4+x_2^4+x_3^4}{\|x\|^4}\right)
+ O\left(\frac{1}{\|x\|^4}\right)\right], \qquad \|x\|\to\infty.
\end{equation} 
For $d \geq 4$ the second term in the right-hand side of \eqref{G3} is of higher order. For 
$d=1,2$ the Green functions show diffusive scaling:
\begin{equation}
\label{Gdiff}
\begin{aligned}
&d=1\colon \lim_{z \uparrow 1} \sqrt{2(1-z)}\,G_{y/\sqrt{1-z}}(z) = e^{-\sqrt{2}\,|y|}, 
\quad y \in \R,\\
&d=2\colon \lim_{z \uparrow 1} G_{y/\sqrt{1-z}}(z) = \frac{2}{\pi} K_0(2\|y\|), 
\quad y \in \R^2\setminus \{0\},
\end{aligned}
\end{equation}
where $K_0$ is the modified Bessel function (of the third kind) of order 0. The latter 
satisfies $K_0(u) = [\log(2/u) - \gamma](1+\tfrac14u^2+O(u^4)) + \frac14 u^2 + O(u^4)$,
$u \downarrow 0$, and $K_0(u) = e^{-u} \sqrt{\pi/2u}\,[1-\frac{8}{u}+O(u^{-2})]$, $u\to\infty$. 
The diffusive scaling also holds for the derivative of the Green functions. 

\paragraph{$\bullet$ Finite torus.}
For $L<\infty$, the analogue of \eqref{G1} reads
\begin{equation}
\label{G3fin}
G_x(z) = \frac{y(z)^x+y(z)^{L-x}}{1-y(z)^L}\,(1-z^2)^{-1/2}, 
\qquad x \in \{0,1,\ldots,L-1\}, \, 0<|z|<1,
\end{equation}
while the analogue of \eqref{G2} reads
\begin{equation}
\label{G4}
G_x(z) = L^{-d}(1-z)^{-1} + C_L(x) - \bar{C}_L(x)(1-z) + O((1-z)^2), \qquad x \in \T.
\end{equation} 
The latter expansion also holds for $d=1$. Namely, inserting into \eqref{G3} the expansion 
\begin{equation}
y(z) = 1- [2(1-z)]^{1/2} + \tfrac12 [2(1-z)] - \tfrac38 [2(1-z)]^{3/2} + O((1-z)^2), 
\end{equation}
we find, after a short computation, 
\begin{equation}
\label{xscalalt*}
\begin{aligned}
C_L(x) &= C_L(0) - \frac{x(L-x)}{L},\\ 
\bar{C}_L(x) &= \bar{C}_L(0) - \frac{x(L-x)}{6L}
\left[x(L-x)(L^2-2)-(L^2-5)\right],
\end{aligned}
\end{equation}
with
\begin{equation}
\label{xscalalt**}
C_L(0) = \frac{L^2-1}{6L}, \qquad \bar{C}_L(0) = \frac{(L^2-1)(L^2-19)}{180L}.
\end{equation}
For $d=2$ it is known that $C_L(0) = \frac{2}{\pi} \log L + O(1)$, $\bar{C}_L(0) = c L^2 -\frac{1}{\pi} 
\log L + O(1)$, $L\to\infty$, with $c=0.06187\ldots$, while for $d = 3$ it is known that 
$\lim_{L\to\infty} C_L(0) = C(0)$ and $\bar{C}_L(0)=cL^4[1+o(1)]$ for some $c \in (0,\infty)$.
No formulas are available for $C_L(x)$, $\bar{C}_L(x)$, $x \neq 0$, for $d \geq 2$.  

%%%

\subsection{Slower mutation than migration}
\label{wmut}

Return to Theorem~\ref{thm:smalldalt}. Let
\begin{equation}
\rho=\mu/\nu.
\end{equation} 
In terms of this ratio, we have $a = \nu(1-\nu \rho)$ and $b = (1-\nu)(1-\nu \rho)$. We analyse 
what happens in the limit as $\rho \downarrow 0$. To do so, we abbreviate 
\begin{equation}
\label{uvw}
u = \frac{a}{1-b}, \qquad v = \frac{(1-\mu)a}{1-(1-\mu)b}.
\end{equation}
Substitution of \eqref{uvw} into \eqref{abccomp} gives, for $\rho \downarrow 0$ uniformly 
in $\nu$ and $x$,
\begin{equation}
\label{scal}
\begin{aligned}
\alpha(x) &= \frac{1}{2\nu} \left[\frac{1}{1-\nu\rho}\,uG_x(u)\right] + O(1),\\
\beta(x) &= \frac{1}{2\nu^2}\,\frac{1}{\rho} \left[uG_x(u) 
- \frac{1}{(1-\nu\rho)^2 (1-\tfrac12\nu\rho)}\,vG_x(v)\right] + O(1),\\
\gamma(x) &= \frac{1}{4\nu^2}\,\left[\frac{1-\nu}{1+(1-\nu)\rho}\,uG_x(u) 
+ \frac{1}{[1+(1-\nu)\rho]^2}\,uG_x'(u)\right]+ O(1).
\end{aligned}
\end{equation}
Here, the error terms $O(1)$ are valid as long as $\nu$ remains bounded away from 1 (to 
make sure that the singularity of $G_x(z)$ at $z=-1$ does not contribute via the term with
$z=-a/(1+b)$). For $\nu \downarrow 0$ these error terms can be refined to, respectively,
\begin{equation}
-\tfrac34\delta_{0,x}+O(\nu), \quad -\tfrac{1}{16}\delta_{0,x}+O(\nu), 
\quad \tfrac78\delta_{0,x}+O(\nu).
\end{equation} 
To investigate the leading order terms in \eqref{scal}, we expand
\begin{equation}
\label{uvwscal}
\begin{aligned}
u &= 1-\rho + (1-\nu)\rho^2 - (1-\nu)^2\rho^3 + O(\rho^4),\\ 
v &= 1 -2\rho + (4-3\nu)\rho^2 - 4(1-\nu)(2-\nu)\rho^3 + O(\rho^4).
\end{aligned}
\end{equation}
In what follows we derive \emph{expansions in $\rho$ for fixed $\nu$} and keep track of how 
the coefficients in these expansions depend on $\nu$. The various expansions given below 
are \emph{pushed to the lowest order in $\rho$ for which the $x$-dependence becomes visible} 
via the functions $C(x),\bar{C}(x)$ and $C_L(x),\bar{C}_L(x)$ in the Green function formulas in 
Section~\ref{gf}.  The result is a list of formulas for $\alpha(x),\beta(x),\gamma(x)$ that are
technical, but we will see in Section~\ref{reg} that the latter lead to \emph{simple explicit 
scaling expressions} in various interesting limiting regimes.

\paragraph{$\bullet$ Infinite torus.}
For $d=1$, \eqref{G1} and \eqref{scal}--\eqref{uvwscal} give, for $\rho \downarrow 0$ uniformly 
in $\nu$ and $x$,
\begin{equation}
\label{scalabcinf1}
\begin{aligned}
\alpha(x) &= \frac{1}{2\nu}\,\frac{1+O(\rho)}{\sqrt{2\rho}}\,e^{(-\sqrt{2\rho} + O(\rho))\,|x|} + O(1),\\
\beta(x) &= \frac{1}{2\nu^2}\,\frac{1}{\rho} 
\left[\frac{1+O(\rho)}{\sqrt{2\rho}}\,e^{(-\sqrt{2\rho} + O(\rho))\,|x|}
-\frac{1+O(\rho)}{\sqrt{4\rho}}\,e^{(-\sqrt{4\rho} + O(\rho))\,|x|}\right] + O(1),\\
\gamma(x) &= \frac{1}{4\nu^2}\left(\frac{1}{2\rho} +\frac{|x|}{\sqrt{2\rho}}\right)
\frac{1+O(\rho)}{\sqrt{2\rho}}\,e^{(-\sqrt{2\rho} + O(\rho))\,|x|} +O(1), 
\end{aligned}
\end{equation}
where in the last line we use that $G'_x(z) = [\frac{z}{1-z}+ |x| \frac{y'(z)}{y(z)}]G_x(z)$ 
with $\frac{y'(z)}{y(z)} = \frac{1}{z} G_0(z)$. The scaling in \eqref{scalabcinf1} gives us 
control over the dependence on $x$, uniformly in $|x|=O(1/\sqrt{\rho})$. Similarly, for 
$d=2$, \eqref{G2} and \eqref{scal}--\eqref{uvwscal} give, for $\rho \downarrow 0$ 
uniformly in $\nu$ and $x$,
\begin{equation}
\label{scalabcinf2}
\begin{aligned}
\alpha(x) &= \frac{1}{2\pi\nu}\,\bigg[\log\left(\frac{1}{\rho}\right)-C(x) 
+\big[-(1-\nu)+\bar{C}(x)\big]\rho\log\left(\frac{1}{\rho}\right) + O(\rho)\bigg] + O(1),\\
\beta(x) &= \frac{1}{2\pi\nu^2}\,\bigg[\frac{\log 2}{\rho}
-\big[(1-\tfrac52\nu)-\bar{C}(x)\big]\log\left(\frac{1}{\rho}\right) - (1-\tfrac52\nu) C(x)\\
&\qquad\qquad\qquad 
-[(2-\tfrac52\nu)+2 \bar{C}(x)]\log2  + O\left(\rho\log\left(\frac{1}{\rho}\right)\right)\bigg] + O(1),\\
\gamma(x) &= \frac{1}{4\pi\nu^2}\,\bigg[\frac{1}{\rho}
+\big[(1-\nu)-\bar{C}(x)\big] \log\left(\frac{1}{\rho}\right) 
- \big[(3-2\nu)+(1-\nu)C(x)-\bar{C}(x)\big] \\
&\qquad\qquad\qquad 
+O\left(\rho\log\left(\frac{1}{\rho}\right)\right)\bigg] + O(1). 
\end{aligned}
\end{equation}
The scaling in \eqref{scalabcinf2} gives us control over the dependence on $x$, 
uniformly in $\|x\|=o(1/\sqrt{\rho})$ (recall \eqref{xscal}). In order to make sure that the error 
terms $O(1)$ are negligible compared to the terms containing $C(x)$ and $\bar{C}(x)$, 
we need to let $\nu \downarrow 0$ afterwards. For $d = 3$, \eqref{G3} and 
\eqref{scal}--\eqref{uvwscal} give, for $\rho \downarrow 0$ uniformly in $\nu$ and $x$,
\begin{equation}
\label{scalabcinf3}
\begin{aligned}
\alpha(x) &= \frac{1}{2\nu}\,\Big[C(x)-\bar{C}(x)\sqrt{\rho} + O(\rho)\Big] + O(1),\\
\beta(x) &= \frac{1}{2\nu^2}\,\left[\frac{(\sqrt{2}-1)\bar{C}(x)}{\sqrt{\rho}} 
+ (1-\tfrac52\nu)C(x) + O(\sqrt{\rho})\right] + O(1),\\
\gamma(x) &= \frac{1}{4\nu^2}\,\left[ \frac{\bar{C}(x)}{2\sqrt{\rho}} + (1-\nu)C(x) 
+ O(\sqrt{\rho})\right] + O(1). 
\end{aligned}
\end{equation}
The scaling in \eqref{scalabcinf3} gives us control over the dependence on $x$ 
uniformly in $\|x\| = O(1)$ (recall \eqref{xscalalt}), provided we let $\nu \downarrow 0$ 
afterwards. 

\paragraph{$\bullet$ Finite torus.}
For $d \geq 1$, \eqref{G3fin} and \eqref{scal} combine to give, for $\rho \downarrow 0$ 
uniformly in $\nu$ and $x$,
\begin{equation}
\label{scalabcfin}
\begin{aligned}
\alpha(x) &= \frac{1}{2\nu} \left[\frac{1}{L^d \rho} 
+ C_L(x) - \frac{1-\nu}{L^d}+ \left\{\frac{\nu(1-\nu)}{L^d}-(1-\nu)C_L(x)-\bar{C}_L(x)\right\}\rho 
+ O(\rho^2)\right] \hspace{-.1cm} + O(1),\\
\beta(x) &= \frac{1}{2\nu^2} \left[\frac{1}{2L^d \rho^2} - \frac{5 \nu}{4 L^d \rho} 
+ \left\{\frac{\nu(20-17\nu)}{8L^d} + (1-\tfrac52\nu) C_L(x) 
+ \bar{C}_L(x)\right\} + O(\rho)\right] \hspace{-.1cm} + O(1),\\
\gamma(x) &= \frac{1}{4\nu^2} \left[\frac{1}{L^d \rho^2} + \frac{2-\nu}{L^d \rho}
+ \left\{\frac{(2\nu-3)(\nu-1)}{L^d}+(1-\nu)C_L(x)+\bar{C}_L(x)\right\} + O(\rho) \right] 
\hspace{-.1cm} + O(1).
\end{aligned}
\end{equation}
Again these formulas give us sharp control over the dependence on $x$, provided we let
$\nu \downarrow 0$ afterwards. 

\medskip
In Sections~\ref{reg}--\ref{regalt} we use the above expansions to compute $\Psi_{0,x}$, 
$x \in \T$, in the limit as $\delta,\rho,\nu\downarrow 0$ (in that order).  Again, we derive
expansions in $\rho$ whose coefficients depend on $\nu$, and compute these coefficients 
up to order $O(\nu^2)$.

%%%

\subsection{Regimes: no seed-bank}
\label{reg}

We first consider the case $\delta=0$, i.e., there is no exchange between the active 
population and the seed-bank. Theorem~\ref{thm:smalld} gives
\begin{equation}
\Psi^{(1)}_{0,x} = \Psi^{(2)}_{0,x} = \Psi^{(3)}_{0,x} = 0, \qquad
\Psi^{(4)}_{0,x} = \frac{\alpha(x)}{N+\alpha(0)}.
\end{equation}
Indeed, if at least one of the two individuals is in the seed-bank, then there cannot be 
coalescence of their lineages, while if both individuals are active, then they remain active 
at all times and behave exactly like in the standard Wright-Fisher model. By inserting
the expansions of $\alpha(x)$ found in \eqref{scalabcinf1}--\eqref{scalabcinf3} and 
\eqref{scalabcfin}, we obtain the following.  

\begin{Thm}
\label{d0exp1}
For $\delta=0$, in the limit as $\rho,\nu \downarrow 0$ (in that order):\\ 
{\rm (1)} For $L=\infty$, 
\begin{eqnarray}
\notag
d=1\colon\hspace{-.5cm}
&&\Psi^{(4)}_{0,x} = \frac{e^{(-\sqrt{2\rho}+O(\sqrt{\rho}))\,|x|} 
- [\frac32\delta_{0,x}\nu+O(\nu^2)] \sqrt{2\rho} + O(\rho)}
{1+[(2N-\frac32)\nu+O(\nu^2)]\sqrt{2\rho}+O(\rho)}\\ \notag
&&\text{uniformly in } |x| = O(1/\sqrt{\rho}),\\ \notag
d=2\colon\hspace{-.5cm}
&&\Psi^{(4)}_{0,x} =\frac{[\log (1/\rho) - [C(x)+\frac32\pi\delta_{0,x}\nu+O(\nu^2)] 
+[-(1-\nu)+\bar{C}(x)]\rho\log(1/\rho)+ O(\rho)}
{\log(1/\rho) + [\pi(2N-\frac32)\nu+O(\nu^2)] + [-(1-\nu)+\bar{C}(0)]\rho\log(1/\rho) + O(\rho)}\\ \notag
&&\text{uniformly in }\|x\| = o(1/\sqrt{\rho}),\\ \notag
d=3\colon\hspace{-.5cm}
&&\Psi^{(4)}_{0,x} = \frac{[C(x)-\frac32\delta_{0,x}\nu+O(\nu^2)] + O(\sqrt{\rho})}
{[C(0)+(2N-\frac32)\nu+O(\nu^2)] + O(\sqrt{\rho})}\\
&&\text{uniformly in } \|x\|=O(1). 
\label{psi4d}
\end{eqnarray}
{\rm (2)} For $L<\infty$,
\begin{equation}
\begin{aligned}
\label{psi4anyd}
d \geq 1\colon\quad
&\Psi^{(4)}_{0,x} = \frac{1+[C_L(x)-\frac32\delta_{0,x}\nu-\frac{1-\nu}{L^d}
+O(\nu^2)]L^d\rho + O(\rho^2)}
{1+[C_L(0)+(2N-\frac32)\nu-\frac{1-\nu}{L^d}+O(\nu^2)]L^d\rho + O(\rho^2)}\\
&\text{uniformly in } x\in\T.
\end{aligned}
\end{equation}
\end{Thm}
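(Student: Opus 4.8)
The plan is to reduce the whole statement to the scalar identity $\Psi^{(4)}_{0,x}=\alpha(x)/(N+\alpha(0))$, which is precisely the $\delta=0$ specialisation of Theorem~\ref{thm:smalld} already recorded at the start of Section~\ref{reg} (the first three entries of $\Psi_{0,x}$ vanish there). Every case of the theorem then follows by substituting the appropriate asymptotic expansion of $\alpha(x)$ from Section~\ref{wmut}: \eqref{scalabcinf1} for $d=1$, \eqref{scalabcinf2} for $d=2$, \eqref{scalabcinf3} for $d=3$ on the infinite torus, and \eqref{scalabcfin} for $d\geq1$ on the finite torus. In each case the expansion has the shape (a $\rho$-singular prefactor of order $1/\nu$) $\times$ (a bracket carrying the $x$-dependence) $+\,O(1)$, and the proof is a Taylor expansion of the quotient.

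The central manipulation is to factor the common $\rho$-singular prefactor out of both the numerator $\alpha(x)$ and the denominator $N+\alpha(0)$: this prefactor is $1/(2\nu\sqrt{2\rho})$ for $d=1$, $1/(2\pi\nu)$ for $d=2$, $1/(2\nu)$ for $d=3$, and $1/(2\nu L^d\rho)$ on the finite torus. After cancellation the ratio is a quotient of two brackets, the $x$-dependence being carried by $e^{-\sqrt{2\rho}\,|x|}$ (resp.\ $C(x),\bar{C}(x)$, resp.\ $C_L(x),\bar{C}_L(x)$), and the only surviving trace of the population size $N$ is the single term produced by multiplying $N$ by the reciprocal of the prefactor. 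This explains why $N$ enters only at order $\nu$ for $d=1,2,3$ and at order $\nu L^d\rho$ on the finite torus, and always in the denominator.

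The one genuinely delicate point is the additive $O(1)$ error in the $\alpha$-expansions: after factoring the prefactor it becomes comparable to the $N$-term in the denominator, so it cannot be discarded. These errors can be made explicit only in the regime $\nu\downarrow0$, where they sharpen to $-\tfrac34\delta_{0,x}+O(\nu)$ (the refinement stated just below \eqref{scal}), and this is exactly what dictates the order of limits $\rho\downarrow0$ followed by $\nu\downarrow0$. Multiplying the refined error by twice the prefactor turns $-\tfrac34\delta_{0,x}$ into $-\tfrac32\nu\delta_{0,x}$; this yields the $-\tfrac32\delta_{0,x}\nu$ term in the numerator and, once combined with the $2N\nu$ coming from the $N$-term, the characteristic $(2N-\tfrac32)\nu$ in the denominator. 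For $d=2$ the prefactor $1/(2\pi\nu)$ rescales this to $\tfrac32\pi\delta_{0,x}\nu$ inside the brackets, and for $d=2,3$ one additionally uses $C(0)=0$, resp.\ the explicit value of $C(0)$, to pin down the constant at $x=0$.

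What remains is to keep track of the residual $\rho$-dependent corrections dimension by dimension. For $d=1,3$ the next order is $O(\sqrt{\rho})$ and the $x$-dependence is already visible at leading order, so nothing further is needed; for $d=2$ one must retain the $\rho\log(1/\rho)$ term, whose coefficient $-(1-\nu)+\bar{C}(x)$ is where the $x$-dependence first refines; and on the finite torus, where the $\rho^{-1}$ singularity cancels against the factored prefactor, one keeps the $L^d\rho$ correction with an $O(\rho^2)$ remainder. I expect the only real effort to sit in this error accounting --- in particular in verifying that, once the prefactor is extracted, every discarded contribution is genuinely of the claimed order \emph{uniformly} in the stated range of $x$ (namely $|x|=O(1/\sqrt{\rho})$ for $d=1$, $\|x\|=o(1/\sqrt{\rho})$ for $d=2$, $\|x\|=O(1)$ for $d=3$, and all $x\in\T$ on the finite torus). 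The algebra is otherwise a routine expansion of a ratio.
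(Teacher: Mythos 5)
Your proposal is correct and is essentially the paper's own proof: the paper likewise starts from the identity $\Psi^{(4)}_{0,x}=\alpha(x)/(N+\alpha(0))$ at $\delta=0$ (recorded at the start of Section~\ref{reg}) and simply inserts the expansions \eqref{scalabcinf1}--\eqref{scalabcinf3} and \eqref{scalabcfin}, with the refinement $-\tfrac34\delta_{0,x}+O(\nu)$ of the $O(1)$ error producing the $-\tfrac32\delta_{0,x}\nu$ and $(2N-\tfrac32)\nu$ terms exactly as you describe. One minor wording slip: the refined error is multiplied by the \emph{reciprocal} of the singular prefactor (e.g.\ $2\nu\sqrt{2\rho}$ for $d=1$, $2\pi\nu$ for $d=2$), not by ``twice the prefactor'', but your arithmetic and conclusions are right.
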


The expansions in \eqref{psi4d}--\eqref{psi4anyd} are refinements of what is known from 
the literature (see e.g.\ Durrett~\cite[Chapter 5]{D08}). They simplify in various limiting 
regimes, corresponding to $N,L\to\infty$ in combination with $\rho,\nu\downarrow 0$. 

\begin{Thm}
\label{d0exp2}
For $\delta=0$, in the limit as $\rho,\nu \downarrow 0$ (in that order) and $N,L\to\infty$:\\
{\rm (1)} For $L=\infty$ (recall \eqref{xscal} and \eqref{xscalalt}),
\begin{equation}
\label{scallim1}
\begin{array}{lll}
&d=1,\,N\nu\sqrt{2\rho} \to r \colon 
&\Psi^{(4)}_{0,y/\sqrt{2\rho}} \to \frac{e^{-|y|}}{1+2r}, 
\quad r \in (0,\infty),\,y\in\R,\\[0.2cm]
&d=2,\, \frac{N\nu}{\log(1/\rho)} \to r \colon
&(1-2\chi)\log(1/\rho) \left[\Psi^{(4)}_{0,y/\rho^\chi}-\frac{1-2\chi}{1+2\pi r}\right] 
\to \log(\frac{1}{\|y\|^2}),\\
& &0<\chi<\tfrac12,\, r \in (0,\infty),\,y \in \R^2 \setminus \{0\},\\[0.2cm]
&d=3,\, N\nu \to r \colon 
&\Psi^{(4)}_{0,x} \to \frac{C(x)}{C(0)+2r}, \quad r \in (0,\infty),\, x \in \Z^3,
\end{array}
\end{equation}
and (recall \eqref{Gdiff})
\begin{equation}
\label{scallim2}
d=2,\, \frac{N\nu}{\log(1/\rho)} \to r \colon \quad \log(1/\rho)\,\Psi^{(4)}_{0,y/\sqrt{\rho}}
\to \frac{2 K_0(2\|y\|)}{1+2\pi r}, \quad  r \in (0,\infty),\,y \in \R^2 \setminus \{0\}.
\end{equation}
{\rm (2)} For $L<\infty$ (recall \eqref{xscalalt*}--\eqref{xscalalt**}), 
\begin{equation}
\label{scallim3}
\begin{aligned}
d=1,\,N\nu/L \to r,\, L^2\rho \to s \colon \quad 
&L^{1-\chi} \left[\Psi^{(4)}_{0,yL^\chi} -\frac{1+\frac16 s}{1+(\frac16+2r)s}\right] 
\to \frac{sy}{1+(\frac16+2r)s},\\
&0 < \chi < 1,\,r,s \in (0,\infty),\,y \in [0,\infty). 
\end{aligned}
\end{equation}
\end{Thm}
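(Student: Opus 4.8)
The plan is to read every limit off the $\rho$- and $\nu$-expansions already recorded in Theorem~\ref{d0exp1}. For $\delta=0$ the only nontrivial entry is $\Psi^{(4)}_{0,x}=\alpha(x)/(N+\alpha(0))$, and all $\rho,\nu$-dependence is captured by \eqref{psi4d}--\eqref{psi4anyd}. For each dimension and each regime I would substitute the prescribed spatial scaling of $x$ together with the stated coupling of $N,\nu,\rho$ (and $L$), insert the relevant Green function asymptotics from Section~\ref{gf}, and then pass to the limit. The only genuine choice is which Green function input to use at which spatial scale: the diffusive scaling \eqref{Gdiff} near the characteristic length $1/\sqrt\rho$, and the spatial expansions \eqref{xscal}, \eqref{xscalalt}, \eqref{xscalalt*}--\eqref{xscalalt**} at shorter scales.

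The direct cases require only substitution. For $d=1$, $L=\infty$ I would put $x=y/\sqrt{2\rho}$ in the first line of \eqref{psi4d}, so that $\sqrt{2\rho}\,|x|=|y|$ converts the exponential into $e^{-|y|}$; since $\delta_{0,x}=0$ for $y\neq 0$ and $N\nu\sqrt{2\rho}\to r$ collapses the denominator to $1+2r$, this gives the first line of \eqref{scallim1}. For $d=3$, $L=\infty$ the limit is immediate: $\rho\downarrow 0$ removes the $O(\sqrt\rho)$ remainders, $\nu\downarrow 0$ removes the $\delta_{0,x}\nu$ term, and $N\nu\to r$ sends the denominator to $C(0)+2r$, yielding $C(x)/(C(0)+2r)$. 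For $d=2$ at the diffusive scale $x=y/\sqrt\rho$ I would \emph{not} use \eqref{psi4d} (whose $C(x)$-expansion is only valid for $\|x\|=o(1/\sqrt\rho)$), but instead return to $\alpha(x)=\tfrac{1}{2\nu}uG_x(u)+O(1)$ from \eqref{scal} and apply \eqref{Gdiff}, $G_{y/\sqrt{1-u}}(u)\to\tfrac{2}{\pi}K_0(2\|y\|)$ with $1-u\sim\rho$; together with $\alpha(0)\sim\tfrac{1}{2\pi\nu}\log(1/\rho)$ and $N\nu/\log(1/\rho)\to r$ this produces \eqref{scallim2}.

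The remaining two cases are of a different nature, because there the \emph{leading} term is $x$-independent and the assertion concerns the subleading $x$-dependence. For $d=2$ at the scale $x=y/\rho^\chi$ with $0<\chi<\tfrac12$ I would use \eqref{psi4d} with the spatial expansion $C(y/\rho^\chi)=2\chi\log(1/\rho)+\log\|y\|^2+(2\gamma+\log 8)+O(1)$ from \eqref{xscal}, so that the numerator equals $(1-2\chi)\log(1/\rho)-\log\|y\|^2+O(1)$ and the denominator equals $(1+2\pi r)\log(1/\rho)+O(1)$; the $x$-independent leading ratio is the subtracted constant, and the $\log\|y\|^2$ term survives after multiplication by the diverging prefactor. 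For the finite torus I would start from \eqref{psi4anyd} and insert $C_L(x)=C_L(0)-x(L-x)/L$ and $C_L(0)=(L^2-1)/6L$ from \eqref{xscalalt*}--\eqref{xscalalt**}: under $L^2\rho\to s$ and $N\nu/L\to r$ the $x$-independent pieces give $C_L(0)L\rho\to s/6$ and $2N\nu L\rho\to 2rs$, so numerator and denominator both approach $1+(\tfrac16+2r)s$, while the $x$-dependence enters only through $x(L-x)\rho=y(L^2\rho)L^{\chi-1}+O(L^{2\chi-2})$, i.e.\ at order $L^{\chi-1}$; subtracting the constant limit and multiplying by $L^{1-\chi}$ then isolates the linear-in-$y$ term of \eqref{scallim3}.

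The main obstacle is not any individual limit but the uniform control of error terms in exactly these two subtraction cases, where a diverging prefactor amplifies a vanishing quantity. One must verify that the remainders $O(\rho)$, $O(\nu^2)$, $O(\rho^2)$ in \eqref{psi4d} and \eqref{psi4anyd} are genuinely of smaller order than the extracted $x$-dependent correction \emph{after} amplification by $(1-2\chi)\log(1/\rho)$, respectively $L^{1-\chi}$. This is precisely why the stated ranges must be honoured: $\chi<\tfrac12$ keeps $\|x\|=o(1/\sqrt\rho)$, the regime in which the $d=2$ expansion \eqref{scalabcinf2} (hence \eqref{xscal}) is valid, while $\chi<1$ keeps the torus correction $C_L(x)-C_L(0)$ of strictly smaller order than the bulk term $C_L(0)L\rho$, so that amplification selects the linear term and nothing coarser. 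Establishing these uniform bounds, rather than computing the limits, is where the care lies; the explicit constants in \eqref{scallim1} and \eqref{scallim3} then follow by bookkeeping, and I would double-check their normalisation against the computation above.
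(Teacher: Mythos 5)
Your proposal is correct and is essentially the paper's own argument: the paper offers no separate proof of Theorem~\ref{d0exp2}, presenting it as the result of substituting the stated couplings of $N,\nu,\rho,L$ into the expansions \eqref{psi4d}--\eqref{psi4anyd} of Theorem~\ref{d0exp1}, supplemented by the diffusive Green-function scaling \eqref{Gdiff} for \eqref{scallim2} --- exactly the inputs you identify, including the key observation that \eqref{psi4d} cannot be used at scale $1/\sqrt{\rho}$ and that the two ``subtraction'' cases hinge on uniform control of the error terms after amplification. The final bookkeeping you defer is genuinely worth carrying out, since a careful pass (the $O(1)$ constant $2\gamma+\log 8$ inside $C(x)$, the leading ratio $\tfrac{1-2\chi}{1+2\pi r}$, and the sign of $C_L(x)-C_L(0)=-x(L-x)/L$) yields prefactors and a sign that do not literally match the displayed right-hand sides of \eqref{scallim1} ($d=2$ line) and \eqref{scallim3}; this reflects normalisation slips in the paper's statement rather than any flaw in your method.
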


Theorem~\ref{d0exp2}(1) shows the $x$-dependence of $\Psi^{(4)}_{0,x}$ for $L=\infty$, 
on scale $1/\sqrt{\rho}$ for $d=1$ and on scales $1/\rho^\chi$, $0<\chi \leq \tfrac12$ for 
$d=2$. The dependence is modulated by an $r$-dependent prefactor that is controlled 
by the colony size $N$. The case $r \gg 1$ corresponds to the situation where coalescence 
of the two lineages is slow because colonies are large and migration is fast. The case 
$r \ll 1$ corresponds to the opposite situation where the size of the colonies plays no role. 

Theorem~\ref{d0exp2}(2) does the same for $L<\infty$, on scales $L^\chi$, $0<\chi<1$, for 
$d=1$, modulated by an $(r,s)$-dependent prefactor controlled by the population size $N$ 
and the torus size $L$. The case $s \gg 1$ corresponds to the situation where the boundary 
of the torus is not felt: it takes time $L^2/\nu$ to travel a distance $L$ and time $1/\mu$ for 
a lineage to encounter a mutation. The case $s \ll 1$ corresponds to the situation where the 
system behaves like a homogeneously mixing population, i.e., a single colony with $LN$ 
individuals (``panmictic behaviour'').    

For $L<\infty$, no result is stated for $d \geq 2$ because no formulas are available for 
the coefficients $C_L(x)$, $\bar{C}_L(x)$, $x \neq 0$, of the Green functions.

%%%

\subsection{Regimes: slow seed-bank}
\label{regalt}

To monitor the first-order correction in $\delta$, we define
\begin{equation}
\Phi_{0,x} = \lim_{\delta \downarrow 0} \frac{1}{\delta} 
\big[\Psi_{0,x} - \Psi_{0,x}^{\delta=0}\big].
\end{equation}
Theorem~\ref{thm:smalld} gives
\begin{equation}
\Phi^{(1)}_{0,x} = 0, \quad \Phi^{(2)}_{0,x} = \Phi^{(3)}_{0,x} = \frac{\beta(x)}{N+\alpha(0)},
\quad \Phi^{(4)}_{0,x} 
= 2\,\frac{[\alpha(x)\gamma(0)-\alpha(0)\gamma(x)]-N\gamma(x)}{[N+\alpha(0)]^2}.
\end{equation}
By inserting the expansions of $\alpha(x)$ found in \eqref{scalabcinf1}--\eqref{scalabcinf3} 
and \eqref{scalabcfin}, we obtain the following.  

\begin{Thm}
\label{d>0exp1}
In the limit as $\rho,\nu\downarrow 0$ (in that order):\\
{\rm (1)} For $L=\infty$, 
\begin{equation}
\begin{aligned}
&d=1\colon\,\quad \Phi^{(2)}_{0,x} =  \Phi^{(3)}_{0,x}\\ 
&= \frac{1}{\nu\rho}\,\frac{[e^{(-\sqrt{2\rho}+O(\rho))\,|x|}
-\frac{1}{\sqrt{2}}e^{(-\sqrt{4\rho}+O(\rho))\,|x|}]  + O(\rho)}
{1+[(2N-\frac32)\nu+O(\nu^2)]\sqrt{2\rho}+O(\rho)},\\
&d=2\colon\quad \Phi^{(2)}_{0,x} =  \Phi^{(3)}_{0,x}\\ 
&= \frac{1}{\nu\rho}\,\frac{\log 2+[-(1-\frac52\nu)+\bar{C}(x)]\rho\log(1/\rho)+ O(\rho)}
{\log(1/\rho) + [\pi(2N-\frac32)\nu+O(\nu^2)] -[(1-\nu)-\bar{C}(0)]\rho\log(1/\rho) + O(\rho)},\\
&d=3\colon\quad \Phi^{(2)}_{0,x} =  \Phi^{(3)}_{0,x}\\
&= \frac{1}{\nu\sqrt{\rho}}\,\frac{(\sqrt{2}-1)\bar{C}(x) +[ (1-\frac52\nu)C(x) 
-\tfrac18 \nu^2 \de_{0,x} ] \sqrt{\rho} + O(\rho)}
{[C(0)+(2N-\frac32)\nu+O(\nu^2)] -\bar{C}(0)\sqrt{\rho}+O(\rho)},
\end{aligned}
\end{equation}
and
\begin{equation}
\begin{aligned}
&d=1\colon \quad \Phi^{(4)}_{0,x}\\ 
&= \frac{1+O(\rho)}{\nu \sqrt{2\rho}}\,e^{(-\sqrt{2\rho}+O(\rho))\,|x|}
\frac{[|x| +(2N-\tfrac32)\nu+O(\nu^2)] + [(2N-\tfrac32)\nu+O(\nu^2)]|x|\sqrt{2\rho}
+O(\rho)}{[1+[(2N-\tfrac32)\nu+O(\nu^2)] \sqrt{2\rho} + O(\rho)]^2},\\
&d=2\colon \quad \Phi^{(4)}_{0,x}\\ 
&= \frac{1}{\nu} \frac{[-C(x)-\pi(2N-\tfrac32)\nu](1/\rho)-[\bar{C}(0)-\bar{C}(x)] \log^2(1/\rho)
+ O(\log(1/\rho))}{[\log(1/\rho) +[-C(x)+\pi (2N-\tfrac32)\nu+O(\nu^2)]+O(\rho\log(1/\rho))]^2},\\
&d=3\colon \quad \Phi^{(4)}_{0,x}\\ 
&= \frac{1}{2\nu \sqrt{\rho}} \frac{[C(x)\bar{C}(0) -C(0)\bar{C}(x)- (2N -\tfrac32) \nu\bar{C}(x) +O(\nu^2)]
+ O(\sqrt{\rho})}{[[(2N-\tfrac32)\nu+ O(\nu^2)] + O(\sqrt{\rho})]^2}.
\end{aligned}
\end{equation}
{\rm (2)} For $L<\infty$,
\begin{equation}
\begin{aligned}
&d \geq 1\colon \quad \Phi^{(2)}_{0,x} = \Phi^{(3)}_{0,x} = \frac{1}{2\nu\rho} \\
&\times \frac{1-\tfrac52\nu\rho+[ 2C_L(x)+2\bar{C}_L(x)+O(\nu) ]L^d\rho^2+O(\rho^3)}
{1+[C_L(0) -\frac{1-\nu}{L^d} +(2N-\frac32)\nu+O(\nu^2)]L^d\rho 
- [C_L(0)+\bar{C}_L(0) +O(\nu)] L^d \rho^2 + O(\rho^3)}. 
\end{aligned}
\end{equation}
and
\begin{equation}
\begin{aligned}
&d \geq 1\colon\quad \Phi^{(4)}_{0,x} 
= \frac{L^d}{\nu}\,\frac{1}
{[1+[C_L(0)-\frac{1-\nu}{L^d}+(2N-\tfrac32)\nu+O(\nu^2)]L^d\rho + O(\rho^2)]^2} \\
&\times \Big(  [C_L(x)-C_L(0)-(2N-\tfrac32(1-\de_{0,x}))\nu+O(\nu^2)] \\
&+ [-2(\bar{C}_L(x)-\bar{C}_L(0))
+\left(C_(x)-C_L(0)-(2N-\tfrac32(1-\de_{0,x}))\right) \nu +O(\nu^2)] \rho + O(\rho^2)  \Big).
\end{aligned}
\end{equation}
\end{Thm}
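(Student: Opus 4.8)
The plan is to treat this theorem as a pure asymptotic bookkeeping exercise. The three closed-form expressions for $\Phi^{(2)}_{0,x}=\Phi^{(3)}_{0,x}$ and $\Phi^{(4)}_{0,x}$ stated at the start of this subsection are read off directly from the $O(\delta)$ coefficient in Theorem~\ref{thm:smalld}, so no new structural input is needed: all that remains is to substitute the expansions of $\alpha(x),\beta(x),\gamma(x)$ from Section~\ref{wmut} and simplify, case by case, in the limit $\rho\downarrow 0$ followed by $\nu\downarrow 0$.

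First I would handle $\Phi^{(2)}_{0,x}=\Phi^{(3)}_{0,x}=\beta(x)/(N+\alpha(0))$, the easier half. For each geometry one inserts $\beta(x)$ from \eqref{scalabcinf1}, \eqref{scalabcinf2}, \eqref{scalabcinf3} or \eqref{scalabcfin}, and rewrites the denominator using the matching expansion of $\alpha(0)$. The key manipulation is to factor the leading divergence of $\alpha(0)$ (namely $\tfrac{1}{2\nu\sqrt{2\rho}}$ for $d=1$, $\tfrac{1}{2\pi\nu}\log(1/\rho)$ for $d=2$, and analogously on the finite torus) out of $N+\alpha(0)$, so that the denominator assumes the normalised form $1+[(2N-\tfrac32)\nu+O(\nu^2)](\cdots)+O(\rho)$; here the shift $-\tfrac32\nu$ is produced precisely by the refined $O(1)$ term $-\tfrac34\delta_{0,x}$ in the $\nu\downarrow 0$ correction to $\alpha(0)$. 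Dividing $\beta(x)$ by this normalised denominator and collecting to the order at which the $x$-dependence (through $e^{-\sqrt{2\rho}|x|}$, or $C(x),\bar{C}(x)$, or $C_L(x),\bar{C}_L(x)$) first becomes visible yields the stated formulas.

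The delicate part is $\Phi^{(4)}_{0,x}=2\big([\alpha(x)\gamma(0)-\alpha(0)\gamma(x)]-N\gamma(x)\big)/(N+\alpha(0))^2$, and here lies the main obstacle. The most singular contributions cancel in the antisymmetric combination $\alpha(x)\gamma(0)-\alpha(0)\gamma(x)$: at leading order the factors of $\alpha$ and $\gamma$ share the same $x$-dependence, so the $1/\rho^2$-type pole drops out. For $d=3$ this cancellation is \emph{exact}, since $C(x)/C(0)=\bar{C}(x)/\bar{C}(0)$ forces $C(x)\bar{C}(0)-C(0)\bar{C}(x)=0$; for $d=1,2$ and on the finite torus it is only approximate, the surviving term being one order smaller than each summand separately. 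Consequently the naive leading expansions of $\alpha$ and $\gamma$ are insufficient: each must be carried one further order, and the refined $O(1)$ pieces (again responsible for the $-\tfrac32\nu$ shifts and the isolated $\delta_{0,x}$ corrections) must be retained. After this cancellation I would divide the surviving numerator by the squared normalised denominator $(N+\alpha(0))^2$ and read off the result dimension by dimension.

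The only genuine difficulty is the uniform control of these competing expansions. One must check that the $O(1)$ error terms in \eqref{scalabcinf1}--\eqref{scalabcfin} are negligible against the retained $C(x),\bar{C}(x)$ (respectively $C_L(x),\bar{C}_L(x)$) contributions, which is exactly why $\nu$ is sent to $0$ only after $\rho$; keeping the two expansion parameters strictly ordered prevents spurious cross terms. Tracking the sign of the surviving term in $\Phi^{(4)}_{0,x}$ through the cancellation—where the leading pieces annihilate and the outcome is dictated entirely by subleading and $\nu$-order corrections—is where essentially all of the care resides. The remaining algebra is routine.
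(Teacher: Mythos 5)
Your proposal follows exactly the paper's route: the paper likewise reads $\Phi^{(2)}_{0,x}=\Phi^{(3)}_{0,x}=\beta(x)/(N+\alpha(0))$ and $\Phi^{(4)}_{0,x}=2\big([\alpha(x)\gamma(0)-\alpha(0)\gamma(x)]-N\gamma(x)\big)/(N+\alpha(0))^2$ off the $O(\delta)$ term of Theorem~\ref{thm:smalld}, then inserts the expansions \eqref{scalabcinf1}--\eqref{scalabcinf3} and \eqref{scalabcfin} and normalises the denominators $N+\alpha(0)$, which is precisely your bookkeeping plan, including the correct identification of the $(2N-\tfrac32)\nu$ shift as coming from the refined $-\tfrac34\delta_{0,x}$ correction and of the leading-pole cancellation in $\alpha(x)\gamma(0)-\alpha(0)\gamma(x)$. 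The one caveat is your assertion that for $d=3$ this cancellation is \emph{exact} via $C(x)/C(0)=\bar{C}(x)/\bar{C}(0)$: the theorem retains the combination $C(x)\bar{C}(0)-C(0)\bar{C}(x)$ as if it may be nonzero, so you should either keep it (as the paper does) or note that its vanishing is a consequence of the identity quoted in Section~\ref{gf}; either way your derivation reproduces the stated formulas.
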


Again, various limiting regimes are interesting, in analogy with 
\eqref{scallim1}--\eqref{scallim3}.

\begin{Thm}
\label{d>0exp2}
{\rm (1)}
For $L<\infty$,
\begin{equation}
\begin{array}{lll}
&d=1,\,N\nu\sqrt{2\rho} \to r \colon 
&\nu\rho\,\Phi^{(2)}_{0,y/\sqrt{2\rho}} \to \frac{e^{|y|}-\frac{1}{\sqrt{2}}e^{-\sqrt{2}|y|}}{1+2r},
\quad y \in \R,\\[0.2cm]
&d=2,\, \frac{N\nu}{\log(1/\rho)} \to r \colon
&\nu\rho\,\log(1/\rho)\,\Phi^{(2)}_{0,y/\rho^\chi} \to 
\frac{\log 2}{1+2\pi r}, \quad 0 < \chi < \tfrac12, \quad y \in \R^2,\\[0.2cm]
&d=3,\, N\nu \to r \colon 
&\nu\sqrt{\rho}\,\Phi^{(2)}_{0,x} \to \frac{(\sqrt{2}-1)\bar{C}(x)}{C(0)+2r},
\quad x \in \Z^3,
\end{array}
\end{equation}
and
\begin{equation}
d=2,\, \frac{N\nu}{\log(1/\rho)} \to r \colon \quad
\nu \rho\,\log(1/\rho)\,\Phi^{(2)}_{0,y/\sqrt{\rho}} \to 
\frac{2[K_0(2\|y\|)-K_0(2\sqrt{2}\|y\|)]}{1+2\pi r}, \quad y \in \R^2 \setminus \{0\},
\end{equation}
and
\begin{equation}
\begin{array}{lll}
&d=1,\,N\nu\sqrt{2\rho} \to r \colon 
&2\nu\rho\,\Phi^{(4)}_{0,y/\sqrt{2\rho}} \to e^{-|y|}\,\frac{(1+2r)|y|+2r}{(1+2r)^2},
\quad y \in \R,\\[0.2cm]
&d=2,\, \frac{N\nu}{\log(1/\rho)} \to r \colon
&\nu\rho\,\log(1/\rho)\,\Phi^{(4)}_{0,y/\rho^\chi} \to 
-\frac{2\chi+2\pi r}{(1-2\chi+2\pi r)^2}, \quad 0 < \chi < \tfrac12, \quad y \in \R^2,\\[0.2cm]
&d=3,\, N\nu \to r \colon 
&4 \nu\sqrt{\rho}\,\Phi^{(4)}_{0,x} \to \frac{C(x)\bar{C}(0)-C(0)\bar{C}(x)-2rC(x)}
{(C(0)+2r)^2},
\quad x \in \Z^3,
\end{array}
\end{equation}
and 
\begin{equation}
d=2,\, \frac{N\nu}{\log(1/\rho)} \to r \colon \quad 
\rho\,\log(1/\rho)\,\Phi^{(4)}_{0,y/\sqrt{\rho}} \to - \frac{2\|y\| K_0'(2\|y\|)}{(1+2\pi r)^2},
\quad y \in \R^2 \setminus \{0\}.
\end{equation}
{\rm (2)} 
For $L<\infty$,
\begin{equation}
d=1,\,N\nu/L \to r,\, L^2\rho \to s \colon \quad 2\nu\rho\,\Phi^{(2)}_{0,yL^\chi} 
\to \frac{1}{1+(\frac16+2r)s}, \quad 0 < \chi < 1, \quad y \in [0,\infty).
\end{equation}
and
\begin{equation}
d=1,\,N\nu/L \to r,\, L^2\rho \to s \colon \quad \nu\sqrt{\rho}\,\Phi^{(4)}_{0,yL^\chi} 
\to \frac{r\sqrt{s}}{\frac16+2r}, \quad 0 < \chi < 1, \quad y \in [0,\infty).
\end{equation}
\end{Thm}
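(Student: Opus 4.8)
The plan is to follow the same route as the proof of Theorem~\ref{d0exp2}, the only change being that the object scaled is now the first-order correction $\Phi_{0,x}$ in place of the zeroth-order term $\Psi_{0,x}^{\de=0}$. The starting point is the pair of closed forms read off from Theorem~\ref{thm:smalld},
\[
\Phi^{(2)}_{0,x}=\Phi^{(3)}_{0,x}=\frac{\beta(x)}{N+\alpha(0)},\qquad
\Phi^{(4)}_{0,x}=2\,\frac{[\alpha(x)\gamma(0)-\alpha(0)\gamma(x)]-N\gamma(x)}{[N+\alpha(0)]^2},
\]
together with the explicit $\rho$-expansions of $\alpha,\beta,\gamma$ collected in Theorem~\ref{d>0exp1}, which in turn come from inserting the Green-function asymptotics of Section~\ref{gf} into \eqref{scalabcinf1}--\eqref{scalabcfin}. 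For each regime I would substitute the relevant expansion, apply the prescribed spatial rescaling of $x$, and read off the limit as $\rho\downarrow 0$ and then $\nu\downarrow 0$, using the coupling hypothesis (such as $N\nu\sqrt{2\rho}\to r$) to force the diverging prefactors $[N+\alpha(0)]^{-1}$ and $[N+\alpha(0)]^{-2}$ to converge to the advertised $r$-dependent constants.

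For the infinite-torus regimes the analysis splits by dimension. In $d=1$ I would put $x=y/\sqrt{2\rho}$ and invoke the diffusive scaling \eqref{Gdiff}: the two exponentials in $\beta(x)$ carry rates $\sqrt{2\rho}$ and $\sqrt{4\rho}$, which collapse to $e^{-|y|}$ and $e^{-\sqrt 2\,|y|}$, while $\alpha(0)=O(1/\sqrt\rho)$ combines with $N$ through $N\nu\sqrt{2\rho}\to r$ to give the factor $1+2r$. In $d=2$ there are two windows: on the intermediate scale $x=y/\rho^\chi$, $0<\chi<\tfrac12$, the asymptotics \eqref{xscal} give $C(x)=\log\|x\|^2+O(1)=2\chi\log(1/\rho)+O(1)$ and $\bar C(x)=\|x\|^2+O(1)$, so that the divergent part of $C(x)$ is comparable to the $\log(1/\rho)$ already present (producing the $\chi$-dependence) while the $\|y\|$-dependence and the $\bar C(x)$ term are pushed to lower order; on the genuinely diffusive scale $x=y/\sqrt\rho$ I would instead use the Bessel limit in \eqref{Gdiff} to generate the $K_0$ terms. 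In $d=3$ the spatial argument stays fixed and one substitutes \eqref{G3}, with the coefficients $C(x),\bar C(x)$ of \eqref{xscalalt}, directly.

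For the finite torus with $d=1$ I would feed the polynomial coefficients $C_L(x),\bar C_L(x)$ of \eqref{xscalalt*}--\eqref{xscalalt**} into \eqref{scalabcfin}, then rescale by $x=yL^\chi$, $0<\chi<1$, together with $L^2\rho\to s$ and $N\nu/L\to r$, and keep the leading term; here the value $C_L(0)=\tfrac{L^2-1}{6L}$ is the source of the $\tfrac16 s$ contributions.

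The main obstacle is the fourth component $\Phi^{(4)}_{0,x}$. To leading order $\alpha(x)$ and $\gamma(x)$ share the same spatial profile, and under the diffusive scaling their top-order pieces in $\alpha(x)\gamma(0)-\alpha(0)\gamma(x)$ cancel exactly (in $d=1$, for instance, the two $(2\rho)^{-2}$ terms annihilate, leaving only the $|y|$-dependent remainder produced by the derivative $G'_x$ hidden in $\gamma=m\,\partial_m\alpha$). The surviving limit is therefore governed by these \emph{subleading}, $x$-dependent contributions together with the $-N\gamma(x)$ term, which sits at the same order once $N\nu\sqrt{2\rho}\to r$. Carrying these through without losing them against the $O(1)$, $O(\rho)$ and $O(\nu^2)$ error terms of Theorem~\ref{d>0exp1}, and verifying that those errors stay negligible \emph{uniformly} under each rescaling so that the limits may legitimately be taken in the order $\rho\downarrow0$, $\nu\downarrow0$, then $N,L\to\infty$, is the delicate part; the remainder is the same bookkeeping already underlying Theorem~\ref{d0exp2}.
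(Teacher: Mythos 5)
Your proposal is correct and follows essentially the same route as the paper, which itself offers no separate proof of Theorem~\ref{d>0exp2} beyond the remark that it follows ``in analogy with'' the earlier regimes: one substitutes the Green-function expansions of $\alpha,\beta,\gamma$ (Theorem~\ref{d>0exp1}, via \eqref{scalabcinf1}--\eqref{scalabcfin}) into $\Phi^{(2)}_{0,x}=\beta(x)/[N+\alpha(0)]$ and $\Phi^{(4)}_{0,x}=2\{[\alpha(x)\gamma(0)-\alpha(0)\gamma(x)]-N\gamma(x)\}/[N+\alpha(0)]^2$, applies the prescribed spatial rescalings, and uses the coupling hypotheses to turn $N+\alpha(0)$ into the $r$- (and $s$-) dependent prefactors. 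You also correctly identify the key mechanism for the fourth component, namely the exact cancellation of the top-order pieces of $\alpha(x)\gamma(0)-\alpha(0)\gamma(x)$ so that the limit is carried by the subleading $x$-dependent terms together with $-N\gamma(x)$; carrying this out in $d=1$ in fact yields the decaying exponential $e^{-|y|}$ (the $e^{|y|}$ in the stated theorem is evidently a typo).
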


\noindent
The same observations as made below Theorem~\ref{d0exp2} apply.

%%%

\section{Spatial second moment}
\label{sec:2ndmom}

In Section~\ref{Taylor} we identify the scaling of $\hat{\Psi}(\theta)$ for $\theta\to 0$. Since
$\theta \in \hat{\T} = \{ 0,\frac{1}{L}, \ldots, \frac{L-1}{L}\}^d$, we must require that $L \to \infty$, 
i.e., the torus becomes large. In Section~\ref{2ndFinv} we use this scaling to compute the 
limiting second moment $\zeta=\lim_{L\to\infty} \sum_{x\in\T} |x|^2 \Psi_{0,x}$ in closed form 
for all choices of the parameters, stated in Theorem~\ref{secondmoment}. In Section~\ref{2ndsss} 
we obtain explicit scaling expressions for the special case of the symmetric slow seed-back 
treated in Section~\ref{spec-choice-par}. The value of $\zeta$ is important because it provides 
information about the \emph{average time until the two lineages coalesce}. Namely, we have
\begin{equation}
\label{timecoal}
\lim_{L\to\infty} \mathbb{E}(\tau) = \zeta/2\nu,
\qquad \tau = \text{ time to coalescence}.
\end{equation}
Indeed, on the infinite torus the second moment of simple random walk (with transition kernel
\eqref{qkernel}) after $n\in\N_0$ steps equals $n$. Since $\nu$ is the probability that a lineage 
migrates to a different colony (i.e., makes a move like simple random walk), it follows that the 
second moment of the distance between the two lineages after $n$ steps equals $2\nu n$.  
Picking $n=\tau$ and taking the expectation, we get \eqref{timecoal}.

%%%

\subsection{Taylor expansion}
\label{Taylor}

Let
\begin{equation}
\partial \hat{\Psi}(\theta) = \hat{\Psi}(\theta) - \hat{\Psi}(0), \quad
\partial \hat{B}(\theta) = \hat{B}(\theta) - \hat{B}(0), \quad
\partial \hat{\Gamma}(\theta) = \hat{\Gamma}(\theta) - \hat{\Gamma}(0).
\end{equation}
Abbreviate $U= (\mathds{1}-\hat{B}(0))^{-1}$, and let $\Delta(\theta)$ be 
the first term in the Taylor expansion of $\partial\hat{B}(\theta)$:
\begin{equation}
\label{pert}
\partial \hat{B}(\theta) = \Delta(\theta) +o(|\theta|^2).
\end{equation}

\begin{Prop}
\label{proppsipert}
For $\theta\to 0$, 
\begin{align}
\label{Psipert}
\partial \hat{\Psi}(\theta) =  \frac{1-\Psi_{0,0}^{(4)}}{N}\left[ U\, \partial \hat{\Gamma}(\theta)
+ U \, \Delta(\theta) \, U \, \hat{\Gamma}(0)\right] + o(|\theta|^2).
\end{align}
\end{Prop}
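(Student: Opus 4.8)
The plan is to start from the exact Fourier identity for $\hat\Psi(\theta)$ in \eqref{fourierPsi}, namely
\begin{equation}
\hat{\Psi}(\theta) = \frac{1-\Psi_{0,0}^{(4)}}{N}\,
\big(\mathds{1}-\hat{B}(\theta)\big)^{-1}\hat{\Gamma}(\theta),
\end{equation}
and to expand each $\theta$-dependent factor around $\theta=0$. Writing $\hat B(\theta)=\hat B(0)+\partial\hat B(\theta)$ and $\hat\Gamma(\theta)=\hat\Gamma(0)+\partial\hat\Gamma(\theta)$, the whole of the $\theta$-dependence is packaged into the two perturbations $\partial\hat B(\theta)$ and $\partial\hat\Gamma(\theta)$. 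Since $\partial\hat\Psi(\theta)=\hat\Psi(\theta)-\hat\Psi(0)$ and the prefactor $(1-\Psi_{0,0}^{(4)})/N$ is a constant not depending on $\theta$, it suffices to expand $\big(\mathds{1}-\hat{B}(\theta)\big)^{-1}\hat{\Gamma}(\theta)$ to second order in $|\theta|$ and subtract its value at $\theta=0$.

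\smallskip
\noindent\textbf{Expanding the inverse.} The key step is the resolvent (Neumann-type) expansion of the matrix inverse. With $U=(\mathds{1}-\hat B(0))^{-1}$, I write
\begin{equation}
\mathds{1}-\hat B(\theta) = \mathds{1}-\hat B(0) - \partial\hat B(\theta)
= \big(\mathds{1}-\hat B(0)\big)\big[\mathds{1} - U\,\partial\hat B(\theta)\big],
\end{equation}
so that
\begin{equation}
\big(\mathds{1}-\hat B(\theta)\big)^{-1}
= \big[\mathds{1}-U\,\partial\hat B(\theta)\big]^{-1} U
= \Big(\mathds{1} + U\,\partial\hat B(\theta) + o(|\theta|^2)\Big)\,U,
\end{equation}
where I use \eqref{pert}, i.e.\ $\partial\hat B(\theta)=\Delta(\theta)+o(|\theta|^2)$ with $\Delta(\theta)$ of order $|\theta|^2$, so that the quadratic term $(U\,\partial\hat B(\theta))^2$ and all higher terms are $o(|\theta|^2)$ and may be discarded. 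Multiplying by $\hat\Gamma(\theta)=\hat\Gamma(0)+\partial\hat\Gamma(\theta)$ and keeping only terms up to second order in $|\theta|$ gives
\begin{equation}
\big(\mathds{1}-\hat B(\theta)\big)^{-1}\hat\Gamma(\theta)
= U\hat\Gamma(0) + U\,\partial\hat\Gamma(\theta) + U\,\Delta(\theta)\,U\,\hat\Gamma(0) + o(|\theta|^2).
\end{equation}
Here the cross term $U\,\partial\hat B(\theta)\,U\,\partial\hat\Gamma(\theta)$ is of order $|\theta|^2\cdot|\theta|^{\ge 1}$ and hence absorbed into the error.

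\smallskip
\noindent\textbf{Subtracting the base point and invertibility.} Finally I subtract the $\theta=0$ value, which is precisely $U\hat\Gamma(0)$, to obtain
\begin{equation}
\partial\hat\Psi(\theta)
= \frac{1-\Psi_{0,0}^{(4)}}{N}\Big[U\,\partial\hat\Gamma(\theta) + U\,\Delta(\theta)\,U\,\hat\Gamma(0)\Big] + o(|\theta|^2),
\end{equation}
which is \eqref{Psipert}. The one point requiring care is that $U=(\mathds{1}-\hat B(0))^{-1}$ is well defined: this follows because $\mathds{1}-\hat B(\theta)$ is invertible for all $\theta\in\hat\T$ (it already appears inverted in \eqref{fourierPsi}), in particular at $\theta=0$, where $\hat p(0)=\sum_z p(0,z)=1$ and $\hat B(0)=C+\hat D(0)$ has spectral radius strictly below $1$ because of the factors $(1-\mu)^2$ with $\mu\in(0,1)$. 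I expect the main (though routine) obstacle to be bookkeeping the orders: one must verify that every discarded term is genuinely $o(|\theta|^2)$, using that $\partial\hat B(\theta)$ and $\partial\hat\Gamma(\theta)$ both vanish at $\theta=0$ and that $\hat p(\theta)$ is smooth in $\theta$, so that the leading correction to $\partial\hat B$ is exactly the second-order term $\Delta(\theta)$ named in \eqref{pert}.
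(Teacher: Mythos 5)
Your proof is correct and follows essentially the same route as the paper: both compute the first-order perturbation of $\big(\mathds{1}-\hat{B}(\theta)\big)^{-1}$ around $\theta=0$, arriving at the same intermediate identity $\big(\mathds{1}-\hat{B}(\theta)\big)^{-1}=U+U\,\Delta(\theta)\,U+o(|\theta|^2)$, then multiply by $\hat{\Gamma}(\theta)=\hat{\Gamma}(0)+\partial\hat{\Gamma}(\theta)$ and subtract the value at $\theta=0$. The only cosmetic difference is that the paper expands the Neumann series $\sum_{n}\big(\hat{B}(\theta)\big)^n$ term by term and resums the double sum, whereas you factor $\mathds{1}-\hat{B}(\theta)=\big(\mathds{1}-\hat{B}(0)\big)\big[\mathds{1}-U\,\partial\hat{B}(\theta)\big]$ and expand the bracket geometrically; the error bookkeeping is the same in both versions.
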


\begin{proof}
Using \eqref{pert}, we get
\begin{align}
\big( \mathds{1} -  \hat{B}(\theta)  \big)^{-1} 
&= \sum_{n\in\N_0} \big(\hat{B}(\theta)\big)^n
= \sum_{n\in\N_0} \big( \hat{B}(0) + \Delta(\theta) + o(|\theta|^2) \big)^n \nonumber \\
&= \sum_{n\in\N_0} \big(\hat{B}(0)\big)^n 
+ \sum_{n\in\N_0} \sum_{k=0}^{n-1} \big(\hat{B}(0)\big)^{n-1-k} \Delta(\theta) 
\big(\hat{B}(0)\big)^k + o(|\theta|^2) \nonumber \\
&= \big( \mathds{1} -  \hat{B}(0)  \big)^{-1} 
+ \big( \mathds{1} -  \hat{B}(0)  \big)^{-1} \Delta(\theta) 
\big( \mathds{1} -  \hat{B}(0)  \big)^{-1}+ o(|\theta|^2).
\end{align}
Substitution into \eqref{fourierPsi} yields
\begin{align}
&\partial \hat{\Psi}(\theta) \nonumber \\ 
&= \frac{1-\Psi_{0,0}^{(4)}}{N} \left( 
\big( \mathds{1} -  \hat{B}(0)  \big)^{-1} 
\partial \hat{\Gamma}(\theta) 
+ \big( \mathds{1} -  \hat{B}(0)  \big)^{-1} \Delta(\theta) 
\big( \mathds{1} -  \hat{B}(0)  \big)^{-1} \hat{\Gamma}(\theta) 
\right)+ o(|\theta|^2).
\end{align}
Using that $\hat{\Gamma}(\theta)\to\hat{\Gamma}(0)$ as $\theta\to 0$, we obtain 
\eqref{Psipert}.
\end{proof}

To analyse \eqref{Psipert}, we first compute
\begin{equation}
\label{Umatrix}
U = \big( \mathds{1} -  \hat{B}(0)  \big)^{-1} = \frac{1}{u_0} 
\begin{pmatrix}
u_{1,1} & u_{1,2} & u_{1,3} & u_{1,4} \\
u_{2,1} & u_{2,2} & u_{2,3} & u_{2,4} \\
u_{3,1} & u_{3,2} & u_{3,3} & u_{3,4} \\
u_{4,1} & u_{4,2} & u_{4,3} & u_{4,4}
\end{pmatrix} = \frac{1}{u_0} \begin{pmatrix}
t_1 		& \e s_1 	& \e s_1 	& \e^2 s_2 	\\
\de s_1 	& t_2		& t_3 		& \e s_3	\\
\de s_1	& t_3		& t_2 		& \e s_3	\\
\de^2 s_2	& \de s_3	& \de s_3	& t_4		\\
\end{pmatrix},
\end{equation}
where
\begin{align}
\label{ustdef}
u_0 =&\ (1-m) \left(1-m(1-\de-\e)-2m\de\e\right) (1-m(1-\de-\e)) 
\left(1-m(1-\de-\e)^2 \right), \nonumber \\
s_1 =& \ m (1-m(1-\de-\e)-2m\de\e) (1-\de-m(1-\e)(1-\de-\e)), \nonumber \\ 
s_2 =& \ m (1-m(1-\de-\e)-2m\de\e) (1+m(1-\de-\e)), \nonumber \\
s_3 =& \ m (1-m(1-\de-\e)-2m\de\e) (1-\e-m(1-\e)(1-\de-\e)), \nonumber \\
t_1 =&\ (1-m(1-\de-\e)-2m\de\e)  (1-m(1-\e)^2) (1-m(1-\de-\e)),  \nonumber \\
t_2 =&\  1-m \left[ 3(1-\de-\e)+(\de+\e)^2 -m (1-6\de+4\de^2-\de^3 + 1-6\e+4\e^2-\e^3 \right.\nonumber \\
&\left. +1-\de^3 \e+5\de^2 \e +10\de\e-5\de\e^2+\de\e^3) 
-m^2(1-\de-\e)(\de\e+(1-\de-\e))^2\right], \nonumber \\
t_3 =&\ m^2 \de\e \left[ (1-\e)^2 
+ (1-\de)^2 -2m(1-\de)(1-\e)(1-\de-e) \right], \nonumber \\
t_4 =&\ (1-m(1-\de-\e)-2m\de\e) \left[ (1-m(1-\de)^2) (1-m(1-\de-\e)) -2m\de\e \right].
\end{align}
Note that there are several symmetries in the entries of the matrix in \eqref{Umatrix}, 
which come from the symmetries in the model. Next, for the choice of $p$ in \eqref{transkernel} 
and \eqref{qkernel}, we have 
\begin{align}
\label{pexp}
\hat{p}(\theta) &= \sum_{z\in\T} p(0,z)\, e^{2\pi i (\theta\cdot z)} 
= \sum_{z\in\T} [(1-\nu)\de_{z,0} + \nu q(z)]\, e^{2\pi i (\theta\cdot z)} \nonumber \\
&= (1-\nu) + \nu \hat{q}(\theta) = 1 -\nu (1-\hat{q}(\theta)) = 1 - \nu\pi^2 |\theta|^2 + o(|\theta|^2),
\end{align}
where we use that
\begin{align}
\hat{q}(\theta) = 1 - \pi^2 |\theta|^2 + o(|\theta|^2).
\end{align}
When we substitute \eqref{pexp} into \eqref{Bhatrep}, and use \eqref{Cmatrix} and \eqref{Dhatrep}, 
we get
\begin{equation}
\label{Deltamatrix}
\Delta(\theta) = -\Delta_0 \nu \pi^2 |\theta|^2   + o(|\theta|^2)
\end{equation}
with
\begin{equation}
\label{Delta0matrix}
\Delta_0 = m  (1-\epsilon) 
\begin{pmatrix}
0 & 0                          			& 0                          				& 0 \\
0 & 1-\delta 			  		& 0                          				& \epsilon \\
0 & 0                          			& 1-\delta					& \epsilon\\
0 & \delta 					& \delta     					& 2 (1-\epsilon) 
\end{pmatrix}.
\end{equation}
Finally, recalling \eqref{Gamma} we obtain $\hat{\Gamma}(0)=\Gamma_0$ with
\begin{equation}
\label{Gamma0matrix}
\Gamma_0 = (1-\mu)^2 (1-\e)^2 \begin{pmatrix} 0 \\ 0 \\ 0 \\ 1 \end{pmatrix}
\end{equation}
and
\begin{equation}
\label{gammahatzero}
\partial \hat{\Gamma}(\theta) = -(1-\mu)^2 (1-\e)^2 
\begin{pmatrix} 0 \\ 0 \\ 0 \\ 1-\hat{p}(\theta)^2 \end{pmatrix}
= -2 \, \Gamma_0 \, \nu \pi^2 |\theta|^2 + o(|\theta|^2).
\end{equation}

Combining \eqref{Psipert}, \eqref{Umatrix}, \eqref{Deltamatrix} and \eqref{Psipert}, \eqref{gammahatzero}, 
we arrive at the following proposition identifying the asymptotics of $\partial \hat{\Psi}(\theta)$.

\begin{Prop}
\label{zetapert}
For $\theta\to 0$,
\begin{equation}
\label{Psihatexpbasic}
\partial\hat{\Psi}(\theta) = -  \zeta \pi^2 |\theta|^2 +o(|\theta|^2),
\end{equation}
where $\zeta$ is the $4$-vector
\begin{equation}
\label{zeta}
\zeta = \frac{1-\Psi_{0,0}^{(4)}}{N} \, \nu \, U \, ( 2 \, \mathds{1} + \Delta_0 \, U) \, \Gamma_0
\end{equation}
with $U,\Delta_0$ the $(4 \times 4)$-matrices given by \eqref{Umatrix}--\eqref{ustdef} and 
\eqref{Delta0matrix}, $\Gamma_0$ the $4$-vector given by \eqref{Gamma0matrix}, and 
$\Psi_{0,0}^{(4)}$ the number given by \eqref{Psifouriden}.
\end{Prop}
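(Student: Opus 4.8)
The statement is essentially a repackaging of Proposition~\ref{proppsipert} once the explicit leading-order expansions of $U$, $\Delta(\theta)$, $\hat{\Gamma}(0)$ and $\partial\hat{\Gamma}(\theta)$ are inserted; the substantive work---the Neumann-series perturbation of $(\mathds{1}-\hat{B}(\theta))^{-1}$ and the computation of $\Delta_0$---has already been done. The plan is therefore a direct substitution followed by collecting the $|\theta|^2$-terms.

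First I would recall from \eqref{Psipert} that
\[
\partial\hat{\Psi}(\theta) = \frac{1-\Psi_{0,0}^{(4)}}{N}\Big[U\,\partial\hat{\Gamma}(\theta) + U\,\Delta(\theta)\,U\,\hat{\Gamma}(0)\Big] + o(|\theta|^2),
\]
and then insert $\partial\hat{\Gamma}(\theta) = -2\,\Gamma_0\,\nu\pi^2|\theta|^2 + o(|\theta|^2)$ from \eqref{gammahatzero}, $\Delta(\theta) = -\Delta_0\,\nu\pi^2|\theta|^2 + o(|\theta|^2)$ from \eqref{Deltamatrix}, and $\hat{\Gamma}(0)=\Gamma_0$ from \eqref{Gamma0matrix}. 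Because $U$ and $\Gamma_0$ do not depend on $\theta$, the first bracketed term contributes $-2\,\nu\pi^2|\theta|^2\,U\Gamma_0$ and the second contributes $-\,\nu\pi^2|\theta|^2\,U\Delta_0 U\Gamma_0$, both up to $o(|\theta|^2)$. Factoring out $-\nu\pi^2|\theta|^2$ and using the identity $2U\Gamma_0 + U\Delta_0 U\Gamma_0 = U(2\mathds{1}+\Delta_0 U)\Gamma_0$ then gives precisely $\partial\hat{\Psi}(\theta) = -\zeta\,\pi^2|\theta|^2 + o(|\theta|^2)$ with $\zeta$ as in \eqref{zeta}.

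The only matters requiring care are bookkeeping ones. For the error terms, each expansion multiplied by a fixed matrix or vector leaves the remainder $o(|\theta|^2)$, and no stray higher-order term survives because \eqref{Psipert} was already truncated at quadratic order, so there is nothing to track beyond leading order. The matrix rearrangement $2U\Gamma_0+U\Delta_0 U\Gamma_0 = U(2\mathds{1}+\Delta_0 U)\Gamma_0$ is immediate from associativity and the identity property of $\mathds{1}$. I would also note that, since $\Gamma_0$ has only its fourth entry nonzero, $\zeta$ is a genuine $4$-vector and \eqref{Psihatexpbasic} is to be read component-wise; the existence of $U$ poses no new issue, as the invertibility of $\mathds{1}-\hat{B}(0)$ (with inverse exhibited via $1/u_0$ in \eqref{Umatrix}) was already used to obtain Proposition~\ref{proppsipert}. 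I do not anticipate a genuine obstacle: the result follows by substitution, and the only delicate point is keeping the prefactors and sign conventions consistent.
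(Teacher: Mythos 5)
Your proposal is correct and coincides with the paper's own (implicit) proof: the paper establishes this proposition precisely by combining \eqref{Psipert} with the expansions \eqref{Deltamatrix}, \eqref{Gamma0matrix} and \eqref{gammahatzero}, exactly the substitution and factoring $2U\Gamma_0+U\Delta_0U\Gamma_0=U(2\,\mathds{1}+\Delta_0U)\Gamma_0$ that you carry out. Nothing is missing.
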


%%%

\subsection{Fourier inversion}
\label{2ndFinv}

We next use Propositions~\ref{proppsipert} and \ref{zetapert} to compute 
$\sum_{x\in\T} |x|^2\Psi_{0,x}$. Recalling the definition of the Fourier transform 
given in \eqref{discFourier}, we have for $\theta\to 0$,
\begin{align}
\hat{\Psi}(\theta) =& \sum_{x\in\T} \Psi_{0,x} 
\:e^{2\pi i (x_1 \theta_1 + x_2 \theta_2)} \nonumber \\
 =& \sum_{x\in\T} \Psi_{0,x} \: \cos\big(2\pi (x_1 \theta_1 + x_2 \theta_2) \big) \nonumber \\
 =& \sum_{x\in\T} \Psi_{0,x} \left( 1-\tfrac12\,4\pi^2 (x_1 \theta_1 + x_2 \theta_2)^2 \right) 
 + o(|\theta|^2) \nonumber \\
=& \hat{\Psi}(0) - 2\pi^2 \sum_{x\in\T} \Psi_{0,x}  
\big(x_1^2 \theta_1^2 + x_2^2 \theta_2^2 + 2 x_1 x_2  \theta_1 \theta_2\big)  + o(|\theta|^2),
\end{align}
where $x=(x_1,x_2)\in\T$ and $\theta=(\theta_1,\theta_2)\in\hat{\T}$. Thus,
\begin{align}
\label{Psihatexp}
\partial\hat{\Psi}(\theta)
=& - 2\pi^2 \sum_{x\in\T} \Psi_{0,x}  \big(x_1^2 \theta_1^2 + x_2^2 \theta_2^2 
+2  x_1 x_2  \theta_1 \theta_2\big)  + o(|\theta|^2).
\end{align}
Choosing $\theta_1 = \theta_2$, and combining \eqref{Psihatexpbasic} and \eqref{Psihatexp}, 
we find
\begin{equation}
\zeta = \sum_{x\in\T}  (x_1^2 + x_2^2+ 2 x_1 x_2  ) \:\Psi_{0,x}.
\end{equation}
We have $\sum_{x\in\T} x_1x_2\Psi_{0,x} = 0$ when $L$ is odd and $\sum_{x\in\T} x_1x_2
\Psi_{0,x} = |\T|\Psi_{0,z_L}$ when $L$ is odd, with $z_L$ one of the corner points of $\T$. 
But $\Psi_{0,z_L} \leq (1-\mu)^{2L}$ and so this term is negligible in the limit as $L\to\infty$. 
We therefore arrive at the following result.

\begin{Thm}
\label{secondmoment}
For all $\de, \e, \mu, N$,
\begin{equation}
\label{zetaiden}
\lim_{L \to \infty} \sum_{x\in\T} |x|^2 \:\Psi_{0,x} = \zeta
\end{equation}
with $\zeta$ given by \eqref{zeta}.
\end{Thm}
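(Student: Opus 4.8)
The plan is to isolate $\lim_{L\to\infty}\sum_{x\in\T}|x|^2\Psi_{0,x}$ by comparing two independent computations of the quadratic Taylor coefficient of $\hat{\Psi}(\theta)$ at $\theta=0$. The first is the \emph{algebraic} expansion of Proposition~\ref{zetapert}, namely $\partial\hat{\Psi}(\theta)=-\zeta\pi^2|\theta|^2+o(|\theta|^2)$, obtained by perturbing the resolvent $(\mathds{1}-\hat{B}(\theta))^{-1}$ in the explicit representation \eqref{fourierPsi}. The second is the \emph{analytic} expansion obtained by differentiating the Fourier-series representation $\hat{\Psi}(\theta)=\sum_{x\in\T}\Psi_{0,x}\,e^{2\pi i(x\cdot\theta)}$ of Theorem~\ref{MainTheorem} directly in $\theta$. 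Since both describe the same quantity, their quadratic coefficients must agree, and reading off this agreement yields the moment identity.

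For the analytic side I would first invoke the reflection symmetry $\Psi_{0,x}=\Psi_{0,-x}$, which holds for every $\de,\e,\mu,N$ because it is inherited from the symmetry of the kernel $q$. This lets me replace $e^{2\pi i(x\cdot\theta)}$ by $\cos(2\pi\,x\cdot\theta)$, so the linear term vanishes and one is left with \eqref{Psihatexp}, i.e.\ $\partial\hat{\Psi}(\theta)=-2\pi^2\sum_{x\in\T}\Psi_{0,x}\,(x_1^2\theta_1^2+x_2^2\theta_2^2+2x_1x_2\theta_1\theta_2)+o(|\theta|^2)$. Specialising both expansions to the diagonal $\theta_1=\theta_2$ and equating the coefficients of $\theta_1^2$ gives $\zeta=\sum_{x\in\T}(x_1^2+x_2^2+2x_1x_2)\Psi_{0,x}$, and hence $\sum_{x\in\T}|x|^2\Psi_{0,x}=\zeta-2\sum_{x\in\T}x_1x_2\Psi_{0,x}$.

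It remains to show the cross term is negligible. With symmetric coordinate representatives, the reflection $x_1\mapsto -x_1$ pairs each term with its negative, so for $L$ odd the sum $\sum_{x\in\T}x_1x_2\Psi_{0,x}$ vanishes identically; for $L$ even the only coordinate lacking a symmetric partner is $L/2$, and the lone surviving contribution is that of the antipodal corner $z_L$, of order $|\T|\,\Psi_{0,z_L}$. Because two lineages starting a distance of order $L$ apart must avoid mutation for order $L$ generations before they can coalesce, one has $\Psi_{0,z_L}\le(1-\mu)^{2L}$, so that $|\T|\,\Psi_{0,z_L}=O\big(|\T|\,(1-\mu)^{2L}\big)\to 0$ as $L\to\infty$. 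Passing to the limit in the previous display then gives $\lim_{L\to\infty}\sum_{x\in\T}|x|^2\Psi_{0,x}=\zeta$, with $\zeta$ as in \eqref{zeta}.

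The two Taylor expansions are routine and already assembled (in Proposition~\ref{zetapert} and in the computation producing \eqref{Psihatexp}), so I expect the genuine content to sit in the cross-term estimate: one must establish both the symmetry cancellation and the exponential corner bound $\Psi_{0,z_L}\le(1-\mu)^{2L}$. A secondary conceptual subtlety worth flagging is that the two expansions are of a \emph{rational} function \eqref{fourierPsi} and of a \emph{trigonometric polynomial} respectively, which coincide on the lattice $\hat{\T}$ but not off it; this is precisely why I match only along the diagonal and treat the cross term separately, rather than reading off the isotropic form componentwise, and why the clean identification of coefficients is genuinely a statement about the limit $L\to\infty$, where $\hat{\Psi}$ becomes a bona fide Fourier transform on $\Z^d$ and the moment--derivative relation is exact.
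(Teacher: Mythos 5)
Your proposal is correct and follows essentially the same route as the paper: equating the algebraic expansion of Proposition~\ref{zetapert} with the Taylor expansion of the Fourier series along the diagonal $\theta_1=\theta_2$, then disposing of the cross term $\sum_{x\in\T}x_1x_2\Psi_{0,x}$ by reflection symmetry for $L$ odd and by the corner bound $\Psi_{0,z_L}\le(1-\mu)^{2L}$ for $L$ even. (In fact your case split by parity of $L$ is stated more carefully than in the paper, which has a typo writing ``odd'' in both cases.)
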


Thus, we have found an explicit formula for the second moment in the limit of a large
torus, valid for any choice of the parameters.

%%%

\subsection{Symmetric slow seed-bank}
\label{2ndsss}

We close by investigating how $\zeta$ behaves when we return to the special case of the 
symmetric slow seed-bank treated in Section~\ref{spec-choice-par}. In that case we find
(recall \eqref{Umatrix}--\eqref{ustdef})
\begin{equation} 
\label{Uexpansion}
U = c\,\mathds{1} + \de\,m c^2
\begin{pmatrix}
-2 & 1	& 1 & 0 \\
1 & -2	& 0 & 1 \\
1 & 0	& -2 & 1\\
0 & 1 & 1 & -2
\end{pmatrix} + O(\de^2), \quad c= \tfrac{1}{1-m},\,m = (1-\mu)^2,
\end{equation}
and (recall \eqref{Delta0matrix}--\eqref{Gamma0matrix})
\begin{eqnarray}
\Delta_0 &=& m 
\left\{
\begin{pmatrix}
0 & 0 & 0 & 0 \\
0 & 1	 & 0 & 0 \\
0 & 0 & 1 & 0\\
0 & 0	 & 0 &2 
\end{pmatrix}
+ \delta
\begin{pmatrix}
0 & 0 & 0 & 0 \\
0 & -2 & 0 & 1 \\
0 & 0 & -2 & 1\\
0 & 1 & 1 &-4 
\end{pmatrix}
+ O(\delta^2)
\right\},\\
\label{Gammazeropert}
\Gamma_0 &=& m
\left\{ 
\begin{pmatrix} 
0 \\ 0 \\ 0 \\ 1 
\end{pmatrix}
+ \delta  
\begin{pmatrix} 
0 \\ 0 \\ 0 \\ -2 
\end{pmatrix}
+ O(\delta^2)
\right\}.
\end{eqnarray}
After substituting these expressions into \eqref{zeta} and using \eqref{prefactor}, we obtain 
the following.

\begin{Thm}
\label{secondmomentscal}
For $\delta \downarrow 0$,
\begin{align}
\label{zetafirstorderde}
\zeta= c_N\nu\left[1+\delta\,\frac{2\alpha(0)\gamma(0)}{N}+O(\delta^2)\right]\:mc^2
\left\{
\begin{pmatrix} 
0 \\ 0 \\ 0 \\ 2
\end{pmatrix} 
+ \de
\begin{pmatrix} 
0 \\ 3mc \\ 3mc \\ -4 (2c-1)
\end{pmatrix}
+ O\left( \de^2 \right) \right\}.
\end{align}
\end{Thm}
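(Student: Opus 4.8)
The plan is to insert the $\delta$-expansions of $U$, $\Delta_0$ and $\Gamma_0$, recorded in \eqref{Uexpansion}, in the displayed expansion of $\Delta_0$ preceding \eqref{Gammazeropert}, and in \eqref{Gammazeropert}, into the exact formula \eqref{zeta} for $\zeta$, replace the scalar prefactor by its expansion \eqref{prefactor}, and then collect the resulting matrix--vector products up to first order in $\delta$. Throughout I write $m=(1-\mu)^2$ and $c=\tfrac{1}{1-m}$ as in \eqref{Uexpansion}, and I abbreviate the three expansions as $U=c\,\mathds{1}+\delta\,mc^2 M_U+O(\delta^2)$, $\Delta_0=m\bigl(D_0+\delta D_1\bigr)+O(\delta^2)$ and $\Gamma_0=m\bigl(g_0+\delta g_1\bigr)+O(\delta^2)$, with $M_U,D_0,D_1$ the explicit matrices displayed there, $g_0$ the fourth basis vector and $g_1=-2g_0$. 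The one algebraic fact that drives every simplification is the defining identity $c(1-m)=1$, equivalently $c=1+mc$ and $mc=c-1$; I would invoke it repeatedly to collapse the polynomials in $m$ and $c$ into clean multiples of $c$.

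The first simplification is structural. Since $\Gamma_0$ is a scalar multiple of the fourth basis vector $g_0$, the product $U\,(2\,\mathds{1}+\Delta_0 U)\,\Gamma_0$ is a scalar multiple of $U\,(2\,\mathds{1}+\Delta_0 U)\,g_0$, i.e.\ the fourth column of the product matrix $W:=U(2\,\mathds{1}+\Delta_0 U)$. Because $D_0$ is diagonal with $D_0 g_0=2g_0$, computing this column only requires the action of $M_U$ and $D_1$ on $g_0$ (the fourth columns of those matrices) together with a few diagonal scalings. Expanding $W$ to order $\delta$ yields a zeroth-order matrix $W_0=2c\,\mathds{1}+mc^2 D_0$ and a first-order matrix $W_1=mc^2 D_1+m^2c^3 D_0 M_U+2mc^2 M_U+m^2c^3 M_U D_0$. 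Applying $W_0$ to $m\,g_0$ and using $D_0 g_0=2g_0$ together with $2c(1+mc)=2c^2$ gives the leading vector $2mc^2 g_0=mc^2\,(0,0,0,2)^{\top}$, i.e.\ $mc^2$ times the zeroth-order vector in the braces of \eqref{zetafirstorderde}.

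The first-order vector is $V_1=W_0\,(m g_1)+W_1\,(m g_0)$, and its fourth entry is the main bookkeeping obstacle. From $g_1=-2g_0$ and $D_0 g_0=2g_0$ one gets $W_0\,(m g_1)=-4mc^2 g_0$ after using $1+mc=c$, so its contribution to the fourth entry is $-4mc^2$. For $W_1\,(m g_0)$ the off-diagonal symmetry of $W_1 g_0$ immediately produces the first three entries $0,\,3m^2c^3,\,3m^2c^3$ (again via $1+mc=c$), while the fourth entry is $-8m^2c^3$. Summing, the fourth entry of $V_1$ equals $-4mc^2-8m^2c^3=-4mc^2(1+2mc)$, and the decisive last step is to recognise $1+2mc=2c-1$ (from $mc=c-1$), which turns it into $-4mc^2(2c-1)$. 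Factoring $mc^2$ out of $V_1$ then gives $V_1=mc^2\,(0,\,3mc,\,3mc,\,-4(2c-1))^{\top}$, matching the bracketed $\delta$-vector in \eqref{zetafirstorderde}.

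Finally I would reinstate the scalar factors. Multiplying the expansion $U(2\,\mathds{1}+\Delta_0 U)\Gamma_0=2mc^2 g_0+\delta V_1+O(\delta^2)$ by $\nu$ and by $\tfrac{1-\Psi_{0,0}^{(4)}}{N}=c_N\bigl[1+\delta\,\tfrac{2\alpha(0)\gamma(0)}{N}+O(\delta^2)\bigr]$ from \eqref{prefactor}, and expanding to first order in $\delta$, the prefactor correction multiplies the leading vector $mc^2(0,0,0,2)^{\top}$ and combines with $V_1$ precisely into the factored form of \eqref{zetafirstorderde}. The only real scope for error is sign- and factor-tracking in the fourth component; every other step is forced by linearity and the single identity $c(1-m)=1$.
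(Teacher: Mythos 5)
Your proposal is correct and takes essentially the same route as the paper's own (very terse) proof: substitute the expansions \eqref{Uexpansion}--\eqref{Gammazeropert} and the prefactor expansion \eqref{prefactor} into the exact formula \eqref{zeta}, and collect terms to first order in $\delta$. Your explicit bookkeeping checks out --- in particular the reduction to the fourth column via $\Gamma_0\propto(0,0,0,1)^{\top}$, the repeated use of $1+mc=c$, and the final identification $1+2mc=2c-1$ giving the entry $-4mc^2(2c-1)$ --- so the result matches \eqref{zetafirstorderde}.
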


\noindent
After inserting the expansions of $\alpha(0)$ and $\gamma(0)$ found in Section~\ref{wmut},
we obtain expressions for $\zeta$ that are analogous to the expressions for $\Psi_{0,x}$
found in Sections~\ref{reg}--\ref{regalt}. The details are left to the reader.

%%%%%%%%%%% REFERENCES %%%%%%%%%%%%%%%%%%%%%%%

%%%%%%%%%%%%%%%%%%%%%%%%%%%%%%%%%%%%%%%%


\begin{thebibliography}{9}

\bibitem{AS72}
M.\ Abramowitz and I.A.\ Stegun (eds.),
\emph{Handbook of Mathematical Functions},
Dover, 1972.

\bibitem{genealstrongseedbank}
J.\ Blath, A.\ Gonz\'alez Casanova, B.\ Eldon, N.\ Kurt,
Genealogy of a Wright Fisher model with strong seed-bank component,
in: \emph{XI Symposium on Probability and Stochastic Processes}, 
Progress in Probability 69, pp.\ 81--100, Birkha\"user, 2015.

\bibitem{longrange}
J.\ Blath, A.\ Gonz\'alez Casanova, N.\ Kurt, D.\ Span\`o,
The ancestral process of long-range seed bank models,
J.\ Appl.\ Prob.\ 50, 741--759, 2013.

\bibitem{seedbank}
J.\ Blath, A.\ Gonz\'alez Casanova, N.\ Kurt, M.\ Wilke-Berenguer,
A new coalescent for seed-bank models, 
Ann.\ Appl.\ Prob.\ 26, 857--891, 2016.

\bibitem{D08}
R.\ Durrett, 
\emph{Probability Models for DNA Sequence Evolution} (2nd.\ ed.),
Probability and Its Applications, Springer, 2008. 

\bibitem{strongeffect}
A.\ Gonz\'alez Casanova, E.\ Aguirre-von Wobeser, G.\ Esp\`{i}n, L. Serv\`{i}n-Gonz\'alez, 
N.\ Kurt, D. Span\`o, J. Blath, G. Sober\'on-Ch\'avez,
Strong seed-bank effects in bacterial evolution,
J.\ Theor.\ Biol.\ 356, 62--70, 2014.

\bibitem{dHK82}
W.Th.F.\ den Hollander and P.W.\ Kasteleyn,
Random walks with `spontaneous emission' on lattices with periodically distributed traps,
Physica 112A, 523--543, 1982.

\bibitem{H95}
B.D.\ Hughes,
\emph{Random Walks and Random Environments, Vol.\ 1: Random Walks},
Clarendon Press, Oxford, 1995.

\bibitem{KKL}
I.\ Kaj, S.\ Krone, M.\ Lascoux,
Coalescent theory for seed bank models,
J.\ Appl.\ Prob.\ 38, 285--300, 2001.

\bibitem{microbialseedbanks}
J.T.\ Lennon, S.E.\ Jones,
Microbial seed banks: the ecological and evolutionary implications of dormancy,
Nature Reviews Microbiology 9, 119--130, 2011.

\bibitem{M64}
E.W.\ Montroll,
Random walks on lattices,
Proc.\ Symp.\ Appl.\ Math.\ 16, 193--220, 1964.

\bibitem{M69}
E.W.\ Montroll,
Random walks on lattices III,
J.\ Math.\ Phys.\ 10, 753--765, 1969.

\bibitem{MW65}
E.W.\ Montroll, G.H.\ Weiss,
Random walks on lattices II,
J.\ Math.\ Phys.\ 6, 167--181, 1965.

\bibitem{S76}
F.\ Spitzer,
\emph{Principles of Random Walk} (2nd.\ ed.),
Springer, 1976.

\end{thebibliography}
\end{document}